\newif\ifcol
\newcommand{\colorr}{\color[rgb]{0.8,0,0}}
\newcommand{\colorp}{\color[rgb]{0.8,0,0.8}}
\newcommand{\colorr}{\color{black}}% {{\color[rgb]{0.8,0,0}}
\newcommand{\colorp}{\color{black}}
\newtheorem{definition}{Definition}[section]
\newtheorem{lemma}{Lemma}[section]
\newtheorem{proposition}{Proposition}[section]
\newtheorem{theorem}{Theorem}[section]
\newtheorem{remark}{Remark}[section]
\newtheorem{example}{Example}[section]
\newtheorem{corollary}{Corollary}[section]
\newcommand{\mca}{\mathcal{A}}
\newcommand{\mcb}{\mathcal{B}}
\newcommand{\mcf}{\mathcal{F}}
\newcommand{\mcl}{\mathcal{L}}
\newcommand{\mct}{\mathcal{T}}
\newcommand{\mcu}{\mathcal{U}}
\newcommand{\mcx}{\mathcal{X}}
\newcommand{\mcz}{\mathcal{Z}}
\newcommand{\mbbn}{\mathbb{N}}
\newcommand{\mbbr}{\mathbb{R}}
\newcommand{\al}{\alpha}
\newcommand{\del}{\delta}
\newcommand{\ep}{\epsilon}
\newcommand{\D}{\Delta}
\newcommand{\p}{\partial}
\def\sumj{\sum_{j=1}^{n}}
\def\supp{\mathrm{supp}}
\newcommand{\EQ}[1]{\begin{equation}{#1}\end{equation}}
\newcommand{\EQQ}[1]{\begin{equation}{#1 \nonumber}\end{equation}}
\newcommand{\EQN}[1]{\begin{equation}\begin{split}{#1}\end{split}\end{equation}}
\newcommand{\EQNN}[1]{\begin{equation}\begin{split}{#1 \nonumber}\end{split}\end{equation}}
\newcommand{\MAT}[2]{\left(\begin{array}{#1} #2 \end{array} \right)} % 行列
\newcommand{\DIV}[2]{\left\{\begin{array}{#1} #2 \end{array} \right.} % 場合分け
\newcommand{\PT}{\partial_\theta}
\newcommand{\PTT}{\partial_t}
\newcommand{\PS}{\partial_\sigma}
\newcommand{\PA}{\partial_\alpha}
\newcommand{\DELP}[2]{\frac{\partial_#2 p_{j,t}^{#1}}{p_{j,t}^{#1}}}
\newcommand{\DELPP}[2]{\frac{\partial_#2 p_{j,0}^{#1}}{p_{j,0}^{#1}}}
\newcommand{\DELPPT}[2]{\frac{#1 p_{j,0}^0(#2 p_{j,0}^0)^\top}{(p_{j,0}^0)^2}}
\newcommand{\DELTP}[1]{\frac{#1 \tilde{p}_j}{\tilde{p}_j}}
\newcommand{\DELPY}[1]{\frac{#1 \tilde{f}_t}{\tilde{f}_t}}
\newcommand{\DELPYY}[1]{\frac{#1 \tilde{f}_0}{\tilde{f}_0}}
\newcommand{\DELPYYT}{\frac{\PT \tilde{f}_0\PT \tilde{f}_0^\top}{\tilde{f}_0^2}}
\newcommand{\SUMJ}{\sum_{j=1}^n}
\newcommand{\INTTJ}{\int_{t_{j-1}}^{t_j}}
\newcommand{\PCB}{p_{x_{j-1},\alpha_{th}}^{c,\tau-t_{j-1}}(x)}
\newcommand{\PCA}{p_{x+y,\alpha_{th}}^{c,t_j-\tau}(x_j)}
\newcommand{\TOP}{\to^{\tilde{P}_{\alpha_0,n}}}
\newcommand{\XJ}{X_{t_{j-1}}}
\newcommand{\INTTP}[2]{\int #2 \tilde{p}_{j,#1}dx_j}
\newcommand{\BIGINTTP}[3]{\int\bigg| #2 \bigg|^{#1}#3\tilde{p}_j(\alpha_{th})dx_j}
\newcommand{\XC}{X^{\alpha,c}}
\begin{document}

\title{Local Asymptotic Mixed Normality via Transition Density Approximation and an Application to Ergodic Jump-Diffusion Processes}
\author{Teppei Ogihara$^*$ and Yuma Uehara$^{**}$\\
$*$
\begin{small}Graduate School of Information Science and Technology, University of Tokyo, \end{small}\\
\begin{small}7-3-1 Hongo, Bunkyo-ku, Tokyo 113--8656, Japan \end{small}\\
$**$
\begin{small} Department of Mathematics, Faculty of Engineering Science, Kansai University\end{small}\\
\begin{small}3-3-35 Yamate-cho, Suita-shi, Osaka 564-8680, Japan\end{small}
}
\maketitle

\noindent
{\bf Abstract.}
We study sufficient conditions for local asymptotic mixed normality. We weaken the sufficient conditions in Theorem~1 of Jeganathan (Sankhy\=a Ser.\ A 1982) so that they can be applied to a {\colorp wider class of statistical models including a jump-diffusion model.}
Moreover, we show that local asymptotic mixed normality of a statistical model generated by approximated transition density functions is implied for the original model. Together with density approximation by means of thresholding techniques, we show local asymptotic normality for a statistical model of discretely observed jump-diffusion processes where the drift coefficient, diffusion coefficient, and jump structure are parametrized.
As a consequence, the quasi-maximum-likelihood and Bayes-type estimators proposed in Shimizu and Yoshida (Stat.\ Inference Stoch.\ Process.\ 2006) and Ogihara and Yoshida (Stat.\ Inference Stoch.\ Process.\ 2011) are shown to be asymptotically efficient in this model. {\colorp Moreover, we can construct asymptotically uniformly most powerful tests for the parameters.}\\

\noindent
{\bf Keywords.}
asymptotically efficient estimator; {\colorp asymptotically uniformly most powerful test;} jump-diffusion processes; local asymptotic mixed normality; $L^2$ regularity condition; {\colorp Malliavin calculus;} thresholding techniques

\section{Introduction}

Local asymptotic normality (LAN) is an important property in asymptotic statistical theory because it enables us to discuss asymptotic efficiency of parameter estimators for parametric models.
H\'ajek \cite{haj70, haj72} showed the convolution theorem and the minimax theorem for statistical models that satisfy the LAN property.
Both theorems give different concepts of asymptotic efficiency.
The LAN property has mainly been studied for statistical models of independent observations.
Thereafter, this property has been extended to local asymptotic mixed normality (LAMN) so that we can address a wider class of statistical models.
Jeganathan~\cite{jeg82} showed the convolution theorem and the minimax theorem under the LAMN property.
LAN and LAMN enable several studies
of statistical methods, in addition to the efficiency of estimators. Several works have studied the construction of asymptotically uniformly most powerful tests
under LAN or LAMN (see i.g. Choi, Hall, and Schick~\cite{cho-etal96} and Basawa and Scott~\cite{bas-sco83}).
Moreover, Eguchi and Masuda~\cite{egu-mas18} studied the model selection problem
via Schwartz-type Bayesian information criteria (BIC), and showed model selection consistency of the BIC when the statistical model is locally asymptotically quadratic (which includes the case of LAMN).

For statistical models of discrete observations of semimartingales,
Gobet~\cite{gob01, gob02} showed the LAN and the LAMN properties for diffusion processes in the high-frequency limit of observations on a fixed interval and on a growing observation window, respectively. Related to processes with jumps,
A{\"{\i}}t-Sahalia and Jacod~\cite{ait-jac07} showed the LAN property
for some classes of L\'evy processes, including  symmetric stable processes, Kawai and Masuda~\cite{kaw-mas13} showed the LAN property for normal inverse Gaussian L\'evy processes, and Cl\'ement and Gloter \cite{cle-glo15} proved the LAMN property for a stochastic differential equation driven by a pure jump L\'evy process whose L\'evy measure is an $\alpha$-stable Le\'vy measure near the origin.
Statistical models of jump-diffusion processes were also studied in several papers; Kohatsu, Nualart, and Tran~\cite{koh-etal17} showed the LAN property for ergodic jump-diffusion processes whose drift coefficient depends on an unknown parameter, and Cl\'ement, Delattre, and Gloter \cite{cle-etal14} studied the LAMN property for the stochastic differential equations with jumps when the unknown parameter determines the jump structure and the jump times are deterministic and given.
Jump-diffusion processes are used for modeling various stochastic phenomena in many areas, such as econometrics, physics, and neuroscience.
Among the vast literature, we refer the reader to Rao~\cite{rao99} and Cont and Tankov~\cite{con-tan04} and references therein.
To our knowledge, there are no studies that show the LAN property for jump-diffusion processes with the drift coefficient, the diffusion coefficient, and the jump structure all parametrized, and in this paper, we focus on such a situation.

In the proofs of the LAN properties for diffusion processes of Gobet~\cite{gob01, gob02}, it is crucial that transition density functions satisfy estimates from above and below by Gaussian density functions up to constants: so-called Aronson-type estimates.
Unlike diffusion processes, jump-diffusion processes do not satisfy Aronson-type estimates in general, and hence we cannot apply Gobet's approach.
In this paper, to show the LAN property for jump-diffusion processes, we instead employ the idea of Theorem~1 in Jeganathan~\cite{jeg82}, which uses the $L^2$ regularity condition.
This approach is convenient in the sense that it does not require Aronson-type estimates for transition density functions.
Though the original results in~\cite{jeg82} cannot be applied to triangular array observations, Theorem~2.1 in Fukasawa and Ogihara~\cite{fuk-ogi20} extends this result to triangular array observations, including high-frequency observations of stochastic processes.
Fukasawa and Ogihara~\cite{fuk-ogi20} showed the LAMN property for degenerate diffusion processes by using this result without Aronson-type estimates.
However, since the $L^2$ regularity conditions in~\cite{jeg82, fuk-ogi20} are conditions for expectation, it is difficult to apply them to jump-diffusion processes whose tail is heavier than diffusion processes.
To solve this problem, we weaken the $L^2$ regularity conditions to conditions for conditional expectation that could be applied to heavy-tailed models such as jump-diffusion processes, and show that the LAMN property still holds under this new scheme (Theorem~\ref{L2regu-thm}).

However, there still remains another serious problem to show the LAN property of jump-diffusion processes: the transition density functions of jump-diffusion processes are given by a mixture of different density functions depending on the jump numbers.
In particular, the asymptotic behavior of the density for no jump is quite different to that for the presence of jumps, as indicated in Kohatsu-Higa, Nualart, and Tran~\cite{koh-etal17}.
Kohatsu-Higa, Nualart, and Tran~\cite{koh-etal17} solved this problem by utilizing Malliavin calculous for Wiener-Poisson space and stochastic flows, and obtained the expression of the transition density functions. 
The expression contains the derivative of jump diffusion processes with respect to drift parameters.
However, when the jump structure is additionally parametrized as in our case, not only the jump coefficient but also the associated Poisson random measure may possibly be parametrized in some way.
This makes it difficult to obtain the derivative of jump diffusion processes with respect to jump parameters which will appear in the formal expression of the transition density functions.
For such a reason, it is tough to evaluate the transition density function, which is important to check Theorem~\ref{L2regu-thm}, in the same way as Kohatsu-Higa, Nualart, and Tran~\cite{koh-etal17}.
%This fact makes it difficult to directly check conditions in Theorem~\ref{L2regu-thm}.
To deal with this problem, we consider the approximation of transition density functions by thresholding techniques used in Shimizu and Yoshida~\cite{shi-yos06} and Ogihara and Yoshida~\cite{ogi-yos11} in order to construct quasi-maximum-likelihood estimators and Bayes-type estimators.
The thresholding techniques are also used for detecting jumps in processes with jumps (see also \cite{bib-win15, glo-etal18, mai14, man11}) and improving the estimation accuracy of continuous components.
However, it is not clear that the LAN property for the statistical model generated by approximated density functions implies the LAN property for the original model.
We also provide general sufficient conditions for the property (Theorem~\ref{density-approx-thm}).

The rest of this paper is organized as follows. Section~\ref{main-results-section} gives main results. An extended result of Theorem~1 in~\cite{jeg82} is stated in Section~\ref{L2-regu-subsection}. Section~\ref{density-approx-subsection} studies sufficient conditions for the LAMN property using transition density approximation. The LAN property for discrete observations of jump-diffusion
processes is given in Section~\ref{jump-diffusion-LAN-subsection}.
Section~\ref{L2-regu-section} contains the proof of the results in Section~\ref{L2-regu-subsection} following a similar procedure to the proof of Theorem~1 in~\cite{jeg82}.
In Section~\ref{density-approx-section}, we apply the results in Section~\ref{L2-regu-section} to construct a new scheme for the LAMN property via transition density approximation.
In Section~\ref{jump-LAN-section}, we apply the new scheme for the LAMN property via transition density approximation to jump-diffusion processes.

\section{Main results}\label{main-results-section}

\subsection{The LAMN property via a new regularity condition}\label{L2-regu-subsection}

In this subsection, we provide sufficient conditions for the LAMN property {\colorp that is more useful than Theorem~1 in Jeganathan~\cite{jeg82}
and Theorem~2.1 in Fukasawa and Ogihara~\cite{fuk-ogi20} to deal with jump-diffusion processes}.
Some of the assumptions in Theorem~1 of~\cite{jeg82} and Theorem~2.1 of~\cite{fuk-ogi20} are written with respect to expectations.
On the other hand, our new conditions are based on conditional expectations, which is convenient for heavy-tailed noise.

{\colorp Let $\mbbn$ be the set of all positive integers.} Let $\alpha_0\in \Theta$ and $\{P_{\alpha,n}\}_{\alpha\in\Theta}$ be a family of probability measures defined on {\colorp a measurable space} $(\mathcal{X}_n,\mca_n)$ for $n\in\mbbn$,
where $\Theta$ is an open subset of $\mathbb{R}^d$.
We first consider the following slightly weaker condition than the LAMN property.
We denote by $\lVert\cdot \rVert_{{\rm op}}$ the operator norm, {\colorp by $I_l$ the unit matrix of size $l\in\mbbn$,} and by $\top$ the transpose operator for a matrix {\colorp or a vector}.
\begin{description}
\item[Condition (L).] The following two conditions are satisfied for {\colorp $\{P_{\alpha,n}\}_{\alpha\in\Theta,n\in\mbbn}$}.
\begin{enumerate}
\item
There exist a sequence $\{\epsilon_n\}_{n\in\mbbn}$ of nondegenerate matrices,
a sequence $\{V_n(\alpha_0)\}$ of {\colorp $\mca_n$}-measurable $d$-dimensional vectors, and a sequence $\{\mct_n(\alpha_0)\}$
of {\colorp $\mca_n$}-measurable $d\times d$ symmetric matrices such that
$\lVert\epsilon_n\rVert_{{\rm op}}\to 0$ as $n\to\infty$,
\begin{equation}
P_{\alpha_0,n}(\mct_n(\alpha_0) {\rm \ is \ nonnegative \ definite})=1
\end{equation}
for any $n\in\mathbb{N}$, and
\begin{equation}
\log \frac{dP_{\alpha_0+\epsilon_nh,n}}{dP_{\alpha_0,n}}-h^{\top}V_n(\alpha_0)+\frac{1}{2}h^{\top}\mct_n(\alpha_0)h\to 0
\end{equation}
{\colorp as $n\to\infty$} in $P_{\alpha_0,n}$-probability for any $h\in\mathbb{R}^d$.
\item There exists an almost surely {\colorp symmetric,} nonnegative definite {\colorp $d\times d$} random matrix $\mct(\alpha_0)$ such that
\begin{equation*}
\mathcal{L}(V_n(\alpha_0),\mct_n(\alpha_0)|P_{\alpha_0,n})\to \mathcal{L}(\mct^{1/2}(\alpha_0)W,\mct(\alpha_0)),
\end{equation*}
where $W$ is a $d$-dimensional standard normal random variable independent of $\mct(\alpha_0)$.
\end{enumerate}
\end{description}

The following definition of the LAMN property is Definition~1 in~\cite{jeg82}.
\begin{definition}
The sequence of the families {\colorp $\{P_{\alpha,n}\}_{\alpha\in\Theta, n\in\mathbb{N}}$}
satisfies the LAMN condition at $\alpha=\alpha_0\in\Theta$ if Condition (L) is satisfied,
$\epsilon_n$ is a {\colorp symmetric,} positive definite matrix and $P_{\alpha_0,n}(\mct_n(\alpha_0) {\rm \ is \ positive \ definite})=1$
for any $n\in\mathbb{N}$,
and $\mct(\alpha_0)$ is positive definite almost surely.
\end{definition}
We say that {\colorp $\{P_{\alpha,n}\}_{\alpha\in\Theta,n\in\mbbn}$} satisfies LAN if the LAMN condition is satisfied with a nonrandom matrix $\mct(\alpha_0)$.

For proving the LAMN property for diffusion processes by using a localization technique such as Lemma 4.1 in Gobet~\cite{gob01},
Condition~(L) is useful because (L) for the localized model often implies (L) for the original model.
See, for example, the proofs of Theorems~2.4 and 2.5 in~\cite{fuk-ogi20}.

%{\colorp We decompose the measurable space $\mcx_n$.}
Let $(m_n)_{n=1}^\infty$ be a sequence of positive integers.
{\colorp For any $n$, let} $\{\mathcal{X}_{n,j}\}_{j=1}^{m_n}$ be a sequence of complete, separable metric spaces.
Let $\mathcal{X}_n=\mathcal{X}_{n,1}\times \cdots {\colorp \times} \mathcal{X}_{n,m_n}$ {\colorp and $\mca_n=\mcb(\mcx_n)$, where $\mcb(\mcx_n)$ denotes the Borel $\sigma$-algebra of $\mcx_n$.}
We consider statistical experiments $(\mathcal{X}_n, \mathcal{B}(\mathcal{X}_n), \{P_{\alpha,n}\}_{\alpha\in\Theta})$.
Let $X_j=X_{n,j}:\mathcal{X}_n\to \mathcal{X}_{n,j}$ be the natural projection,
$\bar{X}_j=\bar{X}_{n,j}=(X_1,\cdots, X_j)$,
$\bar{\mathcal{X}}_{n,j}=\mathcal{X}_{n,1}\times \cdots \times \mathcal{X}_{n,j}$, {\colorp $\mca_{0,n}=\{\emptyset,\mcx_n\}$, and $\mca_{j,n}$ is the minimal sub $\sigma$-algebra of $\mca_n$ that ${\bar{X}}_j$ is $\mca_{j,n}$-measurable for $1\leq j\leq m_n$}.
Suppose that there exists a $\sigma$-finite measure $\mu_{n,j}$ on $\mathcal{X}_{n,j}$
such that $P_{\alpha,n}(X_1\in \cdot )\ll \mu_{n,1}$ and
$P_{\alpha,n}(X_j\in \cdot | \bar{X}_{j-1}=\bar{x}_{j-1})\ll \mu_{n,j}$ for $2\leq j\leq m_n$
and $\bar{x}_{j-1}\in \bar{\mcx}_{n,j-1}$.

Let $E_\alpha=E_{\alpha,n}$ denote the expectation with respect to $P_{\alpha,n}$, and let $p_j=p_{j,n}$ be conditional density functions defined by
\begin{equation*}
p_1(\alpha)=\frac{dP_{\alpha,n}(X_1\in \cdot)}{d\mu_{n,1}}, \quad p_j(\alpha)=\frac{dP_{\alpha,n}(X_j\in \cdot \ | \bar{X}_{j-1}=\bar{x}_{j-1})}{d\mu_{n,j}} \quad (2\leq j\leq m_n).
\end{equation*}
Then, we can see that
\begin{equation}\label{condE-eq}
\int p_j(\alpha)g(\bar{x}_{j-1},x_j)d\mu_{n,j}={\colorp E_{\alpha}[g(\bar{X}_{j-1},X_j)|\bar{X}_{j-1}= \bar{x}_{j-1}]}
\end{equation}
{\colorp almost surely} for any bounded Borel function $g:\bar{\mcx}_{n,j}\to \mbbr$.

Next, we describe our assumptions for the LAMN property.
Let $\epsilon_n$ be a $d\times d$ nondegenerate matrix,
and let $\alpha_h=\alpha_0+\epsilon_nh$ for $h\in \mathbb{R}^d$.
\begin{description}
\item[Assumption (A1).] There are random vectors
$\dot{\xi}_{nj}(\alpha_0):{\colorp \bar{\mcx}_{n,j}}\to \mbbr$
such that for every $h\in\mathbb{R}^d$,
\begin{equation}\label{L2-regu-eq}
\sum_{j=1}^{m_n}\int \bigg[\xi_{nj}(\alpha_0,h)-\frac{1}{2}h^{\top} \epsilon_n^\top\dot{\xi}_{nj}(\alpha_0)\bigg]^2d\mu_{n,j}\to 0
\end{equation}
as $n\to\infty$ in $P_{\alpha_0,n}$-probability, where $\xi_{nj}(\alpha_0,h)=\sqrt{p_j(\alpha_h)}-\sqrt{p_j(\alpha_0)}$.
\end{description}

To show the LAMN property, we need to identify the limit distribution of $\log(dP_{\alpha',n}/dP_{\alpha,n})$ under $P_{\alpha,n}$.
This involves the log-likelihood ratio of different probability measures,
which is difficult to deal with for stochastic processes in general.
Gobet~\cite{gob01} dealt with this problem for discretely observed diffusion processes by using estimates from below and above by Gaussian density functions (Aronson estimates) to show the LAMN property.
Condition (A1) also involves transition density functions with different values of the parameter.
However, if $p_j$ is a positive-valued $C^2(\Theta)$ function, an estimate similar to (2.6) in~\cite{fuk-ogi20}, we can replace the left-hand side of (\ref{L2-regu-eq}) with a quantity in which the probability measure of expectation and $p_j$ in the integrand have the same parameter value {\colorp $\alpha_{sh} \ (s\in[0,1])$}, and therefore, we do not need Aronson-type estimates for transition density ratios.
Thus, a scheme with the $L^2$ regularity condition does not require Aronson-type estimates, which is one of the advantage of this scheme.
Furthermore, Condition (A1) is the estimate for conditional expectation unlike (A1) in~\cite{fuk-ogi20}.
Therefore, it is much easier to show (A1) compared with (A1) in~\cite{fuk-ogi20} under the heavy-tailed behavior of jump-diffusion processes.

Define
\begin{equation*}
\eta_{j}{\colorp (\bar{x}_{j-1},x_j)}=\DIV{ll}{
\dot{\xi}_{nj}(\alpha_0)/\sqrt{p_j(\alpha_0)} & {\rm if} \ p_j(\alpha_0)\neq 0 \\
0 & {\rm otherwise} }
\end{equation*}
{\colorp We use abbreviation $\eta_j$ both for the random variable $\eta_j(\bar{X}_{j-1},X_j)$ and for the function $\eta_j(\bar{x}_{j-1},x_j)$ when  there is no confusion.
The same is true for other functions of $(\bar{x}_{j-1},x_j)$.}

\begin{equation}\label{TnWn-def}
\mct_n= \epsilon_n^\top\sum_{j=1}^{m_n}E_{\alpha_0}[\eta_{j}\eta_{j}^{\top}|\mca_{j-1,n}]\epsilon_n, \quad
{\rm and} \quad V_n= \epsilon_n^\top\sum_{j=1}^{m_n}\eta_j.
\end{equation}
\begin{description}
\item[Assumption (A2).] {\colorp There exists $n_0\in\mbbn$ such that} $E_{\alpha_0}[|\eta_j|^2|\mca_{j-1,n}]<\infty$ and $E_{\alpha_0}[\eta_{j}|\mca_{j-1,n}]=0$, $P_{\alpha_0,n}$-almost surely for every $1\leq j\leq m_n$ {\colorp and $n\geq n_0$.} 
\item[Assumption (A3).] For every $\epsilon >0$ and $h\in \mathbb{R}^d$,
\begin{equation*}
\sum_{j=1}^{m_n}E_{\alpha_0}[|h^{\top}\epsilon_n^\top\eta_{j}|^21_{\{|h^{\top}\epsilon_n^\top\eta_{j}|>\epsilon\}}|\mca_{j-1,n}]\to 0
\end{equation*}
{\colorp as $n\to\infty$} in $P_{\alpha_0,n}$-probability.
\item[Assumption (A4).]
There exists a random $d\times d$ symmetric matrix $\mct$ such that
$P(\mct \ {\rm is \ nonnegative \ definite})=1$ and
\begin{equation*}
\mathcal{L}((V_n, \mct_n)|P_{\alpha_0,n}) \to \mathcal{L}(\mct^{1/2}W,\mct),
\end{equation*}
where $W \sim N(0,I_d)$ independent of $\mct$.
\end{description}
Conditions (A2)--(A4) are similar to Conditions (A2)--(A5) in~\cite{fuk-ogi20}.
However, (A3) and (A4) in~\cite{fuk-ogi20} are replaced by estimates for conditional expectations and a tightness property that is trivially satisfied under (A4).

\begin{theorem}\label{L2regu-thm}
Assume $(A1)$--$(A4)$. Then the family $\{P_{\alpha,n}\}_{\alpha,n}$
satisfies Condition (L) with $\mct_n$ and $V_n$ in (\ref{TnWn-def}).
If further $\mct$ in (A4) is positive definite almost surely
and $\epsilon_n$ is a {\colorp symmetric,} positive definite matrix for any $n\in\mbbn$,
then $\{P_{\alpha,n}\}_{\alpha,n}$ satisfies the LAMN property at $\theta=\theta_0$.
\end{theorem}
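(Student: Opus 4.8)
The plan is to follow the Le Cam--Jeganathan route, writing the log-likelihood ratio as a sum of conditional increments and performing a second-order stochastic expansion, but carrying out every limiting step in $\mca_{j-1,n}$-conditional form rather than in expectation, as this is exactly the point where the present scheme differs from \cite{jeg82, fuk-ogi20}. Since $p_j(\alpha_h)$ and $p_j(\alpha_0)$ are conditional densities, I would set $Y_{nj}=\xi_{nj}(\alpha_0,h)/\sqrt{p_j(\alpha_0)}$ on $\{p_j(\alpha_0)>0\}$ and $Y_{nj}=0$ otherwise, so that $dP_{\alpha_h,n}/dP_{\alpha_0,n}=\prod_{j=1}^{m_n}(1+Y_{nj})^2$ and hence $\log(dP_{\alpha_h,n}/dP_{\alpha_0,n})=2\sum_{j=1}^{m_n}\log(1+Y_{nj})$. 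Writing $H_{nj}^2:=E_{\alpha_0}[Y_{nj}^2|\mca_{j-1,n}]$, the structural identity I would exploit is the conditional Hellinger-affinity relation $E_{\alpha_0}[Y_{nj}|\mca_{j-1,n}]=\int\sqrt{p_j(\alpha_h)p_j(\alpha_0)}\,d\mu_{n,j}-1=-\tfrac12 H_{nj}^2-\tfrac12 b_{nj}$, where $b_{nj}=\int_{\{p_j(\alpha_0)=0\}}p_j(\alpha_h)\,d\mu_{n,j}$ collects the contribution of $\{p_j(\alpha_0)=0\}$ and is controlled by (A1).

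Next I would linearize using (A1). Setting $u_{nj}=h^\top\epsilon_n^\top\eta_j$ and $\zeta_{nj}=Y_{nj}-\tfrac12 u_{nj}$, identifying the integral in \eqref{L2-regu-eq} with a conditional second moment through \eqref{condE-eq} shows that (A1) is precisely the statement $\sum_j E_{\alpha_0}[\zeta_{nj}^2|\mca_{j-1,n}]\to 0$ in $P_{\alpha_0,n}$-probability, together with $\sum_j b_{nj}\to0$. Combined with $E_{\alpha_0}[\eta_j|\mca_{j-1,n}]=0$ from (A2), this gives $\sum_j E_{\alpha_0}[u_{nj}|\mca_{j-1,n}]=0$; expanding $Y_{nj}^2=\tfrac14 u_{nj}^2+u_{nj}\zeta_{nj}+\zeta_{nj}^2$ and bounding the cross term by the Cauchy--Schwarz inequality (the $u$-part being $O_P(1)$ since $\sum_j E_{\alpha_0}[u_{nj}^2|\mca_{j-1,n}]=h^\top\mct_n h$ is tight by (A4), the $\zeta$-part being $o_P(1)$) yields $\sum_j H_{nj}^2=\tfrac14 h^\top\mct_n h+o_P(1)$.

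The core is then the termwise expansion $2\log(1+Y_{nj})=2Y_{nj}-Y_{nj}^2+r_{nj}$. For the linear term, $2\sum_j Y_{nj}=h^\top V_n+2\sum_j\zeta_{nj}$; decomposing $\zeta_{nj}$ into its $\mca_{j-1,n}$-martingale part (whose sum vanishes in $P_{\alpha_0,n}$-probability because its conditional variance is dominated by $\sum_j E_{\alpha_0}[\zeta_{nj}^2|\mca_{j-1,n}]\to0$) and its conditional mean $-\tfrac12 H_{nj}^2-\tfrac12 b_{nj}$ gives $2\sum_j Y_{nj}=h^\top V_n-\sum_j H_{nj}^2+o_P(1)$. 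For the quadratic term, a conditional law of large numbers for martingale differences gives $\sum_j Y_{nj}^2=\sum_j H_{nj}^2+o_P(1)$. Combining,
\[
\log\frac{dP_{\alpha_h,n}}{dP_{\alpha_0,n}}=h^\top V_n-2\sum_{j=1}^{m_n}H_{nj}^2+o_P(1)=h^\top V_n-\tfrac12 h^\top\mct_n h+o_P(1),
\]
which is the log-expansion in Condition~(L)(1); nonnegative definiteness of $\mct_n$ is immediate from its form $\epsilon_n^\top(\sum_j E_{\alpha_0}[\eta_j\eta_j^\top|\mca_{j-1,n}])\epsilon_n$ in \eqref{TnWn-def}, and the normalization $\|\epsilon_n\|_{\mathrm{op}}\to0$ is inherited from the standing assumptions on $\epsilon_n$. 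Part~(2) of Condition~(L) is exactly (A4). For the LAMN upgrade I would note that $\epsilon_n$ is symmetric positive definite and $\mct$ is positive definite by hypothesis, and to secure the remaining requirement that $\mct_n$ be positive definite $P_{\alpha_0,n}$-a.s.\ I would replace $\mct_n$ by $\mct_n+\delta_n I_d$ for a deterministic $\delta_n\downarrow0$, which leaves both the expansion and the joint convergence (A4) intact.

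The hard part will be making the two ``conditional LLN / Lindeberg'' steps rigorous entirely in $\mca_{j-1,n}$-conditional form: controlling the Taylor remainder $\sum_j r_{nj}$ and the martingale fluctuation $\sum_j(Y_{nj}^2-H_{nj}^2)$. Both rest on a uniform asymptotic negligibility of the array $\{Y_{nj}\}$, which I would obtain by transferring the conditional Lindeberg condition (A3) for $u_{nj}$ to $Y_{nj}$ via the conditional $L^2$ smallness of $\zeta_{nj}$, followed by a truncation argument: truncating $Y_{nj}$ at a fixed level $\ep$ makes the truncated square-fluctuations have conditional variance $O_P(\ep^2)$, while the excess is absorbed by (A3), and $\sum_j|Y_{nj}|^3\le(\max_j|Y_{nj}|)\sum_j Y_{nj}^2\to0$ handles the remainder. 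This conditional bookkeeping, replacing the expectation-level estimates of \cite{jeg82, fuk-ogi20}, is the most delicate point; I would also verify that the $\{p_j(\alpha_0)=0\}$ contributions $b_{nj}$ sum to zero, which follows directly from (A1).
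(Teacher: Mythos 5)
Your proposal is correct and follows essentially the same route as the paper's proof: the identity $\log(dP_{\alpha_h,n}/dP_{\alpha_0,n})=2\sum_j\log(1+Y_{nj})$ with $Y_{nj}$ equal to the paper's $\dot{\eta}_{nj}(\alpha_0,h)$, the conditional Hellinger-affinity computation of $E_{\alpha_0}[Y_{nj}|\mca_{j-1,n}]$, the control of the $\{p_j(\alpha_0)=0\}$ contributions via (A1), the cubic remainder bound $\sum_j|Y_{nj}|^3\leq\max_j|Y_{nj}|\sum_jY_{nj}^2$, and the conditional-to-unconditional transfer are exactly the paper's Lemmas 3.1--3.7. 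The only cosmetic difference is your positive-definiteness fix $\mct_n+\delta_nI_d$ in place of the indicator replacement of Remark 2.2; both work.
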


\begin{remark}\label{LAMN-remark}
As in Remark 2.1 of~\cite{fuk-ogi20}, if Condition (L) is satisfied, $\epsilon_n$ is {\colorp symmetric and} positive definite for any $n\in\mbbn$, and $\mct$ is positive definite almost surely, then we can easily show the LAMN property by replacing $\mct_n$ with
\EQQ{\dot{\mct}_n=\mct_n1_{\{\mct_n {\rm \ is \ positive \ definite}\}}
+I_d1_{\{\mct_n {\rm \ is \ not \ positive \ definite}\}}.}
\end{remark}

\subsection{The LAMN property via transition density approximation}\label{density-approx-subsection}

Theorem~\ref{L2regu-thm} is an important tool when we show the LAN property for discrete observations of jump-diffusion processes because
this result requires neither Aronson-type estimates nor an expectation-type $L^2$ regularity condition. 
The another important issue to show the LAN property is to handle the mixture of density functions that behave quite differently depending on the jump numbers.
We use the thresholding techniques developed in Shimizu and Yoshida~\cite{shi-yos06} and Ogihara and Yoshida~\cite{ogi-yos11} to deal with this issue.
We approximate the transition density functions of jump-diffusion processes with thresholding density functions whose asymptotic behaviors are much easier to deal with.
We will show that the LAN property of the original model is proved under some conditions on the approximating density functions.

Let $\tilde{p}_1(\alpha)=\tilde{p}_1(\alpha,x_1)$ and $\tilde{p}_j(\alpha)=\tilde{p}_j(\alpha,x_j,\bar{x}_{j-1})$ be nonnegative-valued functions such that $\tilde{p}_1(\alpha,\cdot)$ is measurable and the mapping $(\bar{x}_{j-1},A)\mapsto \int_A \tilde{p}_j(\alpha,x_j,\bar{x}_{j-1})\mu_{n,j}(dx_j)$ is a transition kernel for $2\leq j\leq m_n$.
We emphasize that $\tilde{p}_1(\alpha,\cdot)$ and $\tilde{p}_j(\alpha,\cdot,\bar{x}_{j-1})$ are not supposed to be probability measures.
This is important in the sense that we can consider normalized probability measures on sets that do not contain original rare events.
We introduce associated normalizing constants $d_1(\alpha)=\int\tilde{p}_1(\alpha)dx_1$ and $d_j(\bar{x}_{j-1},\alpha)=\int \tilde{p}_j(\alpha)dx_j$ for $2\leq j \leq m_n$.
Assume $d_j(\bar{x}_{j-1},\alpha)$ is nonzero and finite for any $(\bar{x}_{j-1},\alpha)$,
and let $\tilde{P}_{\alpha,n}$ be a probability measure defined by $\tilde{P}_{\alpha,n}=\prod_{j=1}^{m_n}(\tilde{p}_j(\alpha)/d_j(\alpha))({\colorp \bigotimes_{j=1}^{m_n}}\mu_{n,j}(dx_j))$,
where $\bar{x}_0=\emptyset$.

Let ${\colorp K_{n,j}\in \mca_{j,n}}$ for $1\leq j\leq m_n-1$. Let $D_{j,h}(\bar{x}_{j-1},t)=d_j(\bar{x}_{j-1},\alpha_{th})$ for $t\in [0,1]$ and $h\in \mbbr^d$ such that $(\alpha_{th})_{t\in [0,1]}\subset \Theta$.
Let
\EQQ{\zeta_{j,t}^{l,h}=\frac{(\frac{d}{dt})^l\tilde{p}_j(\alpha_{th})}{\tilde{p}_j(\alpha_{th})}1_{\{\tilde{p}_j(\alpha_{th})\neq 0\}}}
for $l\in \mbbn$ and $h\in \mbbr^d$.

\begin{description}
\item[Assumption (B1).] For any $\epsilon>0$ there exists $N\in\mathbb{N}$ such that
\begin{equation}\label{tail-prob-est1}
\sup_{\alpha\in \Theta}P_{\alpha,n}(\cup_{j=1}^{ m_n-1}K_{n,j}^c)<\epsilon
\end{equation}
and
\begin{equation}\label{tail-prob-est2}
\sup_{\alpha\in \Theta}\tilde{P}_{\alpha,n}(\cup_{j=1}^{ m_n-1}K_{n,j}^c)<\epsilon
\end{equation}
for $n\geq N$. Moreover,
\begin{equation}\label{p-diff-est}
m_n\max_{1\leq j\leq m_n}\sup_{\alpha\in \Theta,\bar{x}_{j-1}\in {\colorp \bar{X}_{j-1}(K_{n,j-1})}}\int |p_j(\alpha)-\tilde{p}_j(\alpha)|\mu_{n,j}(dx_j)\to 0
\end{equation}
as $n\to\infty$.
\end{description}

{\colorp Here and in the following, we ignore $\bar{x}_0\in \bar{X}_0(K_{n,0})$ in the range of the supremum.}
(B1) implies that $\tilde{P}_{\alpha,n}$ approximates $P_{\alpha,n}$ well except on a rare event.
A typical example of a rare event is that $x_1, \dots, x_n, \dots$ have large magnitude.
On such an event, it is often difficult to evaluate a difference of mass measured by $\tilde{P}_{\alpha,n}$ and $P_{\alpha,n}$.
As for an application to jump-diffusion processes, we set
\begin{equation*}
K_{n,j}=\left\{{\colorp (x_l)_{l=0}^n\subset \mbbr^{m(n+1)}}\middle| \max_{0\leq l\leq j} |x_l|\leq n^\del\right\}
\end{equation*}
for small enough $\del>0$, and this makes it possible to obtain \eqref{p-diff-est}.
For more details, see Section \ref{B1B2-subsection}.

Since the approximation probability measure $\tilde{P}_{\alpha,n}$ contains normalizing constants $d_1,\dots,d_{m_n}$, to check \eqref{tail-prob-est2} may seem cumbersome.
Then the following lemma is helpful.

\begin{lemma}\label{B1-suff-lemma}
Assume (\ref{p-diff-est}), that $\tilde{p}_j(\alpha,x_j,\bar{x}_{j-1})\leq p_j(\alpha,x_j,\bar{x}_{j-1})$ $\mu_{n,1}\otimes \cdots \otimes \mu_{n,j}$-almost everywhere in $\bar{x}_j$ for any $\alpha$, 
and that for any $\epsilon>0$, there exists $N\in\mathbb{N}$ such that (\ref{tail-prob-est1}) for $n\geq N$.
Then, for any $\epsilon>0$, there exists $N'\in\mathbb{N}$ such that (\ref{tail-prob-est2}) for $n\geq N'$.
\end{lemma}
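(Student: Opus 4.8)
The plan is to pass to the complementary event $B_n:=\bigcap_{j=1}^{m_n-1}K_{n,j}=\big(\bigcup_{j=1}^{m_n-1}K_{n,j}^c\big)^c$ and to prove the uniform estimate $\sup_{\alpha\in\Theta}|P_{\alpha,n}(B_n)-\tilde P_{\alpha,n}(B_n)|\to0$, which is the same as $\sup_\alpha|P_{\alpha,n}(A_n)-\tilde P_{\alpha,n}(A_n)|\to0$ for $A_n:=\bigcup_{j=1}^{m_n-1}K_{n,j}^c$. Granting this, \eqref{tail-prob-est2} is immediate: given $\epsilon>0$, choose $N$ from the hypothesis so that $\sup_\alpha P_{\alpha,n}(A_n)<\epsilon/2$ for $n\ge N$, and $N''$ so that the difference above is $<\epsilon/2$ for $n\ge N''$; then $\sup_\alpha\tilde P_{\alpha,n}(A_n)<\epsilon$ for $n\ge N':=\max(N,N'')$.

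Since $K_{n,j}\in\mca_{j,n}$ for $1\le j\le m_n-1$, both $A_n$ and $B_n$ lie in $\mca_{m_n-1,n}$, i.e. they depend only on $\bar X_{m_n-1}$. Hence in the integrals for $P_{\alpha,n}(B_n)$ and $\tilde P_{\alpha,n}(B_n)$ the last coordinate $x_{m_n}$ integrates out (using $\int p_{m_n}(\alpha)\,\mu_{n,m_n}(dx_{m_n})=1$ and $\int(\tilde p_{m_n}(\alpha)/d_{m_n}(\alpha))\,\mu_{n,m_n}(dx_{m_n})=1$), so it suffices to compare the two laws on $\bar{\mcx}_{n,m_n-1}$, with densities $\prod_{j=1}^{m_n-1}p_j(\alpha)$ and $\prod_{j=1}^{m_n-1}(\tilde p_j(\alpha)/d_j(\alpha))$ against $\bigotimes_{j=1}^{m_n-1}\mu_{n,j}$. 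I would then telescope, replacing $p_k$ by $\tilde p_k/d_k$ one index at a time, which gives
\[
\tilde P_{\alpha,n}(B_n)-P_{\alpha,n}(B_n)=\sum_{k=1}^{m_n-1}\int_{B_n}\Big(\prod_{j=1}^{k-1}\tfrac{\tilde p_j(\alpha)}{d_j(\alpha)}\Big)\Big(\tfrac{\tilde p_k(\alpha)}{d_k(\alpha)}-p_k(\alpha)\Big)\Big(\prod_{j=k+1}^{m_n-1}p_j(\alpha)\Big)\bigotimes_{j=1}^{m_n-1}\mu_{n,j}.
\]

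The key per-coordinate estimate is
\[
\int\Big|\tfrac{\tilde p_k(\alpha)}{d_k(\alpha)}-p_k(\alpha)\Big|\mu_{n,k}(dx_k)\le 2\int|p_k(\alpha)-\tilde p_k(\alpha)|\mu_{n,k}(dx_k),
\]
which holds because $\tilde p_k\le p_k$ gives $d_k(\alpha)\le1$ and $1-d_k(\alpha)=\int(p_k-\tilde p_k)\,\mu_{n,k}(dx_k)=\int|p_k-\tilde p_k|\,\mu_{n,k}(dx_k)$, so $\int|\tilde p_k/d_k-\tilde p_k|=1-d_k$ and the triangle inequality closes. Bounding the $k$-th telescoping term in absolute value and using $1_{B_n}\le 1_{K_{n,k-1}}$ for $k\ge2$ (with $k=1$ needing no localization), I integrate first the future factors $\prod_{j>k}p_j$ to $1$, then $x_k$ via the displayed per-coordinate estimate restricted to $\bar x_{k-1}\in\bar X_{k-1}(K_{n,k-1})$, which by \eqref{p-diff-est} is at most $2\rho_n/m_n$ with $\rho_n:=m_n\max_{1\le j\le m_n}\sup_{\alpha,\bar x_{j-1}\in \bar X_{j-1}(K_{n,j-1})}\int|p_j-\tilde p_j|\,\mu_{n,j}(dx_j)\to0$, and finally the past factor $\int 1_{K_{n,k-1}}\prod_{j<k}\tilde p_j/d_j\le1$. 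Each term is thus $\le 2\rho_n/m_n$, and summing the $m_n-1$ terms yields $\sup_\alpha|\tilde P_{\alpha,n}(B_n)-P_{\alpha,n}(B_n)|\le 2\rho_n\to0$.

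The main obstacle, and the reason for working with $B_n$ rather than $A_n$, is the normalizing constants $d_k$ combined with the fact that \eqref{p-diff-est} controls $p_k-\tilde p_k$ only for $\bar x_{k-1}\in\bar X_{k-1}(K_{n,k-1})$. If one telescopes directly on $A_n$, the contributions with $\bar x_{k-1}\in K_{n,k-1}^c$ cannot invoke \eqref{p-diff-est} and admit only the crude bound $\int|\tilde p_k/d_k-p_k|\le2$, producing terms of order $\tilde P_{\alpha,n}(K_{n,k-1}^c)$ whose sum makes the estimate circular. Passing to $B_n=\bigcap_j K_{n,j}$ removes this: along every path in $B_n$ one has $\bar x_{k-1}\in K_{n,k-1}$ for all $k$, so the good-set estimate is available at every step and no uncontrolled tail term ever arises. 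The hypotheses $\tilde p_j\le p_j$ and \eqref{tail-prob-est1} enter, respectively, through the per-coordinate estimate and through the final $\epsilon/2$ split.
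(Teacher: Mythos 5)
Your proof is correct, but it takes a genuinely different route from the paper's. The paper decomposes $\bigcup_{l=1}^{m_n-1}K_{n,l}^c$ into the disjoint ``first-exit'' pieces $(\bigcap_{k=1}^{l-1}K_{n,k})\cap K_{n,l}^c$, integrates out the future coordinates, and uses the pointwise domination $\tilde p_j\le p_j$ together with $d_j\ge 1-\delta_n$ on the good set to obtain the \emph{multiplicative} bound $\tilde P_{\alpha,n}(\cup_l K_{n,l}^c)\le (1-\delta_n)^{-(m_n-1)}P_{\alpha,n}(\cup_l K_{n,l}^c)$ with $(1-\delta_n)^{-(m_n-1)}\to 1$; smallness then transfers directly. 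You instead pass to the complement $B_n=\bigcap_j K_{n,j}$ and prove the \emph{additive} bound $\sup_\alpha|\tilde P_{\alpha,n}(B_n)-P_{\alpha,n}(B_n)|\le 2\rho_n\to 0$ by telescoping the product densities, which is exactly the mechanism the paper reserves for the first half of Theorem~\ref{log-likelihood-approx-thm} (cf.\ (\ref{pro-esti}) and (\ref{dj-est2})); your observation that one must localize to $B_n$ so that (\ref{p-diff-est}) is available at every telescoping step is the same point that forces the paper to restrict to $\mathcal{K}_n$ there. Each approach buys something: the paper's first-exit argument is shorter and leans harder on $\tilde p_j\le p_j$ (used pointwise in the integrand), while your per-coordinate estimate uses that hypothesis only through $d_k\le 1$ and the identity $1-d_k=\int|p_k-\tilde p_k|\,d\mu_{n,k}$ — and since $|1-d_k|\le\int|p_k-\tilde p_k|\,d\mu_{n,k}$ holds without any domination, your argument shows the conclusion survives with the hypothesis $\tilde p_j\le p_j$ essentially removed (at the cost of a slightly messier constant), i.e.\ the lemma is really a corollary of the restricted total-variation estimate. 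All steps check out, including the treatment of $k=1$ without localization and the final $\epsilon/2$ split using (\ref{tail-prob-est1}).
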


The following theorem ensures that the LAMN property of $(\tilde{P}_{\alpha,n})_{\alpha,n}$ implies the LAMN property of $(P_{\alpha,n})_{\alpha,n}$ under (B1).

\begin{theorem}\label{log-likelihood-approx-thm}
Assume (B1). Then, $\sup_{\alpha\in\Theta}\lVert P_{\alpha,n}-\tilde{P}_{\alpha,n}\rVert\to 0$ {\colorp as $n\to\infty$. If further, for any $\epsilon>0$ and $h\in \mbbr^d$, there exists $\delta>0$ such that 
\EQ{\label{tildeP-cond} \limsup_{n\to\infty}\tilde{P}_{\alpha_0,n}\bigg(\frac{d\tilde{P}_{\alpha_h,n}}{d\tilde{P}_{\alpha_0,n}}<\delta\bigg)<\epsilon,}
then
}
\begin{equation*}
\log\frac{dP_{\alpha_h,n}}{dP_{\alpha_0,n}}-\log\frac{d\tilde{P}_{\alpha_h,n}}{d\tilde{P}_{\alpha_0,n}}\to 0
\end{equation*}
as $n\to\infty$ in $P_{\alpha_0,n}$- and $\tilde{P}_{\alpha_0,n}$-probability for any $h\in\mathbb{R}^d$.
\end{theorem}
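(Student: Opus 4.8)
The plan is to reduce both assertions to the single statement that the density ratio $R_\alpha:=f_\alpha/\tilde f_\alpha$ is close to $1$, where $f_\alpha=\prod_{j=1}^{m_n}p_j(\alpha)$ and $\tilde f_\alpha=\prod_{j=1}^{m_n}\tilde p_j(\alpha)/d_j(\alpha)$ denote the densities of $P_{\alpha,n}$ and $\tilde P_{\alpha,n}$ with respect to $\mu:=\bigotimes_{j=1}^{m_n}\mu_{n,j}$, and $R_\alpha$ is taken on $\{\tilde f_\alpha>0\}$. For the total variation, I would set $G_n=\bigcap_{j=1}^{m_n-1}K_{n,j}$ and, for any event $A$, use $|P_{\alpha,n}(A)-\tilde P_{\alpha,n}(A)|\le P_{\alpha,n}(G_n^c)+\tilde P_{\alpha,n}(G_n^c)+\int_{G_n}|f_\alpha-\tilde f_\alpha|\,d\mu$. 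Since $G_n^c=\bigcup_{j=1}^{m_n-1}K_{n,j}^c$, the first two terms are $<2\epsilon$ uniformly in $\alpha$ for large $n$ by \eqref{tail-prob-est1}--\eqref{tail-prob-est2}.

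For the remaining integral I would expand by the telescoping identity
\[
f_\alpha-\tilde f_\alpha=\sum_{k=1}^{m_n}\Big(\prod_{j<k}\frac{\tilde p_j(\alpha)}{d_j(\alpha)}\Big)\Big(p_k(\alpha)-\frac{\tilde p_k(\alpha)}{d_k(\alpha)}\Big)\Big(\prod_{j>k}p_j(\alpha)\Big).
\]
The decisive observation is that $1_{G_n}\le 1_{K_{n,k-1}}$, because $K_{n,k-1}\in\mca_{k-1,n}$ and $G_n\subseteq K_{n,k-1}$; after integrating out the future factors (each $\int p_j(\alpha)\,\mu_{n,j}(dx_j)=1$) and the past factors against $\tilde P_{\alpha,n}$, the $k$-th term is dominated by $\sup_{\bar{x}_{k-1}\in\bar{X}_{k-1}(K_{n,k-1})}\int|p_k(\alpha)-\tilde p_k(\alpha)/d_k(\alpha)|\,\mu_{n,k}(dx_k)$. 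Using $\int\tilde p_k\,d\mu=d_k$ and $|d_k-1|=|\int(\tilde p_k-p_k)\,d\mu|\le\int|p_k-\tilde p_k|\,d\mu$ gives $\int|p_k-\tilde p_k/d_k|\,d\mu\le 2\int|p_k-\tilde p_k|\,d\mu$, so the whole sum is at most $2m_n\max_k\sup_{\bar{x}_{k-1}\in\bar{X}_{k-1}(K_{n,k-1})}\int|p_k-\tilde p_k|\,d\mu\to0$ by \eqref{p-diff-est}, uniformly in $\alpha$. Dominating $1_{G_n}$ by the single indicator $1_{K_{n,k-1}}$, rather than estimating each excursion out of the good region separately, is the one delicate point: it is what keeps the bad set from contributing an $O(m_n\epsilon)$ error.

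For the log-likelihood ratios, note first that since $\sup_{\alpha}\lVert P_{\alpha,n}-\tilde P_{\alpha,n}\rVert\to0$, for any sequence of events the $P_{\alpha_0,n}$- and $\tilde P_{\alpha_0,n}$-probabilities differ by $o(1)$; hence convergence in probability under the two measures is equivalent, and it suffices to argue under $\tilde P_{\alpha_0,n}$. Writing the target difference as $\log R_{\alpha_h}-\log R_{\alpha_0}$, I would show $R_{\alpha_h}\to1$ and $R_{\alpha_0}\to1$ in $\tilde P_{\alpha_0,n}$-probability and conclude by continuity of $\log$ at $1$ (the event $\{R_\alpha\le 1/2\}$, where $\log$ misbehaves, has vanishing probability). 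Since $\int|f_\alpha-\tilde f_\alpha|\,d\mu$ is controlled by $\lVert P_{\alpha,n}-\tilde P_{\alpha,n}\rVert$, we obtain $\tilde E_\alpha[|R_\alpha-1|]\le\int|f_\alpha-\tilde f_\alpha|\,d\mu\to0$ uniformly in $\alpha$; in particular $R_{\alpha_0}\to1$ in $\tilde P_{\alpha_0,n}$-probability immediately.

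The main obstacle is that this same estimate only delivers $R_{\alpha_h}\to1$ in $\tilde P_{\alpha_h,n}$-probability, whereas we need convergence under $\tilde P_{\alpha_0,n}$; bridging this gap is exactly what \eqref{tildeP-cond} is for. Fixing the threshold $\epsilon>0$ and a tolerance $\epsilon'>0$, choosing $\delta$ from \eqref{tildeP-cond} applied with $\epsilon'$, and writing $B_n=\{|R_{\alpha_h}-1|>\epsilon\}$ and $\tilde L_n=d\tilde P_{\alpha_h,n}/d\tilde P_{\alpha_0,n}$, I would split $\tilde P_{\alpha_0,n}(B_n)\le\tilde P_{\alpha_0,n}(B_n\cap\{\tilde L_n\ge\delta\})+\tilde P_{\alpha_0,n}(\tilde L_n<\delta)$. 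On $\{\tilde L_n\ge\delta\}$ the density-ratio bound $\tilde f_{\alpha_0}\le\delta^{-1}\tilde f_{\alpha_h}$ gives $\tilde P_{\alpha_0,n}(B_n\cap\{\tilde L_n\ge\delta\})\le\delta^{-1}\tilde P_{\alpha_h,n}(B_n)\le(\delta\epsilon)^{-1}\tilde E_{\alpha_h}[|R_{\alpha_h}-1|]\to0$, while $\limsup_n\tilde P_{\alpha_0,n}(\tilde L_n<\delta)<\epsilon'$ by \eqref{tildeP-cond}; letting $\epsilon'\downarrow0$ yields $R_{\alpha_h}\to1$ in $\tilde P_{\alpha_0,n}$-probability, which completes the argument. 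The crux throughout is thus the change of measure between $\alpha_h$ and $\alpha_0$, where $\tilde L_n$ can degenerate, and \eqref{tildeP-cond} supplies precisely the lower bound on $\tilde L_n$ that makes this transfer go through.
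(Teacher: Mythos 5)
Your proof of the total-variation claim is essentially the paper's own argument: the same splitting off of $\mathcal{K}_n^c=\bigcup_{j}K_{n,j}^c$ via \eqref{tail-prob-est1}--\eqref{tail-prob-est2}, the same telescoping product identity (the paper's \eqref{pro-esti}, with the roles of $p_j$ and $\tilde p_j/d_j$ mirrored, which is immaterial), the same domination of $1_{\mathcal{K}_n}$ by the single indicator $1_{K_{n,l-1}}$ attached to the $l$-th summand, and the same reduction of $\int|p_l-\tilde p_l/d_l|\,d\mu_{n,l}$ to $o(m_n^{-1})$ via $|1-d_l|\le\int|p_l-\tilde p_l|\,d\mu_{n,l}$ (the paper's \eqref{dj-est} and \eqref{dj-est2}).

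For the second claim the routes genuinely diverge in presentation: the paper disposes of it in one line by citing Proposition 4.3.2 of Le Cam (1986) and the discussion following it, which is a general statement that total-variation closeness of the pairs $(P_{\alpha_0,n},\tilde P_{\alpha_0,n})$ and $(P_{\alpha_h,n},\tilde P_{\alpha_h,n})$, together with a non-degeneracy condition on the approximating likelihood ratio, forces the two log-likelihood ratios to merge in probability. You instead give a self-contained proof: rewriting the difference as $\log R_{\alpha_h}-\log R_{\alpha_0}$ with $R_\alpha=f_\alpha/\tilde f_\alpha$, getting $R_{\alpha_0}\to1$ under $\tilde P_{\alpha_0,n}$ directly from $\tilde E_{\alpha_0}|R_{\alpha_0}-1|\le\int|f_{\alpha_0}-\tilde f_{\alpha_0}|\,d\mu\to0$, and transferring $R_{\alpha_h}\to1$ from $\tilde P_{\alpha_h,n}$ to $\tilde P_{\alpha_0,n}$ by splitting on $\{\tilde L_n\ge\delta\}$, where the change of measure costs only a factor $\delta^{-1}$, and on $\{\tilde L_n<\delta\}$, which \eqref{tildeP-cond} makes asymptotically negligible. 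This is correct and identifies precisely the role of \eqref{tildeP-cond} that the paper leaves implicit in the citation; what the citation buys the paper is brevity and the automatic handling of the null sets where some density vanishes, which you address only in passing (correctly, since $\{f_{\alpha_0}=0\}$ has vanishing $\tilde P_{\alpha_0,n}$-measure by the total-variation bound and $\{\tilde f_{\alpha_h}=0\}\subset\{\tilde L_n<\delta\}$). What your version buys is a proof readable without access to Le Cam's monograph.
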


{\colorp For a vector $x=(x_1,\cdots, x_k)$, we denote $\partial_x^l=(\frac{\partial^l}{\partial x_{i_1}\cdots \partial x_{i_l}})_{i_1,\cdots,i_l=1}^k$.}

\begin{description}
\item[Assumption (B2).]
{\colorp For any $h\in\mbbr^d$, there} exists $N\in \mbbn$ such that
$(\frac{d}{dt})^lp_j(\alpha_{th})$, $(\frac{d}{dt})^l\tilde{p}_j(\alpha_{th})$ and $\partial_t^lD_{j,h}$ exist and is continuous for $n\geq N$, $t\in [0,1]$,
and $l\in\{0,1,2\}$,
almost everywhere in $\bar{x}_{j-1}\in {\colorp \bar{X}_{j-1}(K_{n,j-1})}$.
Moreover, {\colorp there exists $\delta>0$ such that}
\begin{equation}\label{zeta-integrable}
\sup_{|t|\leq \delta, \bar{x}_{j-1}\in {\colorp \bar{X}_{j-1}(K_{n,j-1})}}\int|\zeta_{j,t}^{l,h}|\tilde{p}_j(\alpha_{th})\mu_{n,j}(dx_j)<\infty
\end{equation}
and
\begin{equation}\label{D-del-est}
m_n^{1/l}\max_{1\leq j\leq m_n}\sup_{t\in [0,1],\bar{x}_{j-1}\in {\colorp \bar{X}_{j-1}(K_{n,j-1})}}|\partial_t^lD_{j,h}(\bar{x}_{j-1},t)|\to 0 \quad {\rm as} \quad {\colorp n}\to \infty
\end{equation}
for {\colorp $l\in \{1,2\}$} and $n\geq N$.
\end{description}

Lebesgue's dominated convergence theorem yields (\ref{D-del-est}) if
\EQ{\label{B2-suff} m_n^{1/l}\max_{1\leq j\leq m_n}\sup_{t\in [0,1],\bar{x}_{j-1}\in {\colorp \bar{X}_{j-1}(K_{n,j-1})}}
\int |\partial^l_\alpha p_j(\alpha_{th})-\partial^l_\alpha \tilde{p}_j(\alpha_{th})|\mu_{n,j}(dx_j)\to 0
}
for $h\in \mbbr^d$, $l\in \{1,2,3\}$, and $n\geq N$ (see (\ref{tildePderivaEst}) and {\colorp (\ref{eta-j-zero})} for the details).
For the setting in Section~\ref{jump-diffusion-LAN-subsection}, it is not easy to check (\ref{B2-suff}) because of the heavy-tailed behavior. So we directly check (\ref{D-del-est}) in Section~\ref{jump-LAN-section}.

Let $(e_i)_{i=1}^d$ be the standard unit vectors in $\mbbr^d$, and let ${\colorp \tilde{\eta}_j(\bar{x}_{j-1},x_j)=(\zeta_{j,0}^{1,e_1},\cdots, \zeta_{j,0}^{1,e_d})1_{\bar{X}_{j-1}(K_{n,j-1})}(\bar{x}_{j-1})}$.
Let $\tilde{E}_{\alpha}$ denote the expectation with respect to $\tilde{P}_{\alpha,n}$.
Let
\EQ{\label{tildeTtildeV-def} \tilde{\mct}_n=\sum_{j=1}^{m_n}\tilde{E}_{\alpha_0}[\tilde{\eta}_{j}\tilde{\eta}_{j}^{\top}|\mca_{j-1,n}]
\quad {\rm and} \quad \tilde{V}_n=\sum_{j=1}^{m_n}\tilde{\eta}_j.}
We further assume the following conditions.
\begin{description}
\item[Assumption (B3).]
$\tilde{E}_{\alpha_0}[|\zeta_{j,t}^{1,h}|^2|\mca_{j-1,n}]<\infty$ and the zero points of $\tilde{p}_j$ do not depend on $\alpha\in \Theta$ $\tilde{P}_{\alpha_0,n}$-almost surely for $1\leq j\leq m_n$, and
\begin{equation*}
\sum_{j=1}^{m_n}\sup_{t\in [0,1]}\tilde{E}_{\alpha_{th}}[|\zeta_{j,t}^{2,h}|^2
+|\zeta_{j,t}^{1,h}|^4|\mca_{j-1,n}]\to 0
\end{equation*}
as $n\to\infty$ in $\tilde{P}_{\alpha_0,n}$-probability.
\item[Assumption (B4).]
There exists a random $d\times d$ symmetric matrix $\mct$ such that $P[\mct \ {\rm is \ nonnegative \ definite}]=1$ and
\begin{equation*}
\mathcal{L}((\tilde{V}_n, \tilde{\mct}_n)|\tilde{P}_{\alpha_0,n}) \to \mathcal{L}(\mct^{1/2}W,\mct),
\end{equation*}
where $W \sim N(0,I_d)$ {\colorp independent of $\mct$}.
\end{description}

Conditions (B3) and (B4) are conditions for the asymptotic behavior of functions of $\partial_t\tilde{p}_j$ (not $\partial_t p_j$).
This fact is important when we discuss the LAN property of jump-diffusion procesess in the following section. While the asymptotic behavior of the transition density functions of jump-diffusion processes is difficult to deal with, that of thresholding density functions is much easier to handle.
The next theorem ensures that we need to consider only the latter when we show the LAMN property of the original model.

\begin{theorem}\label{density-approx-thm}
Assume (B1)--(B4). Then, the family $\{P_{\alpha,n}\}_{\alpha,n}$ of probability measures satisfies Condition (L) with $\tilde{\mct}_n$ and $\tilde{V}_n$ in (\ref{tildeTtildeV-def}).
If further $\mct$ in (B4) is positive definite almost surely and $\epsilon_n$ is {\colorp symmetric and} positive definite for any $n\in\mbbn$,
then $\{P_{\alpha,n}\}_{\alpha,n}$ satisfies the LAMN property at $\alpha=\alpha_0$.
\end{theorem}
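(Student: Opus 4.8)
The plan is to obtain Condition~(L) for $\{P_{\alpha,n}\}$ in two stages: first prove Condition~(L) for the approximating family $\{\tilde{P}_{\alpha,n}\}$ by applying Theorem~\ref{L2regu-thm} to it, and then transport the conclusion to $\{P_{\alpha,n}\}$ via Theorem~\ref{log-likelihood-approx-thm}. Since $\tilde{P}_{\alpha,n}$ is built from the genuine conditional densities $\tilde{p}_j(\alpha)/d_j(\alpha)$ with respect to $\mu_{n,j}$, Theorem~\ref{L2regu-thm} applies once (A1)--(A4) are checked for $\{\tilde{P}_{\alpha,n}\}$. The one structural wrinkle is that the centered score naturally attached to $\tilde{P}_{\alpha,n}$ is $\eta_j^{\tilde P}=\partial_\alpha\log(\tilde{p}_j/d_j)$, whereas $\tilde{\eta}_j,\tilde{\mct}_n,\tilde{V}_n$ in \eqref{tildeTtildeV-def} are assembled from $\tilde{p}_j$ alone; along direction $e_i$ the two differ exactly by $\partial_tD_{j,e_i}/D_{j,e_i}$, the logarithmic derivative of the normalizing constant.

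To verify (A1)--(A4) for $\{\tilde{P}_{\alpha,n}\}$ I would proceed as follows. For (A1) I expand $\sqrt{(\tilde{p}_j/d_j)(\alpha_h)}-\sqrt{(\tilde{p}_j/d_j)(\alpha_0)}$ by Taylor's formula along $t\mapsto\alpha_{th}$; the $C^2$-regularity and the integrability \eqref{zeta-integrable} in (B2) legitimize the expansion, and the $L^2$-remainder splits into a $\tilde{p}_j$-part dominated by $\sum_j\sup_t\tilde{E}_{\alpha_{th}}[|\zeta_{j,t}^{2,h}|^2+|\zeta_{j,t}^{1,h}|^4\mid\mca_{j-1,n}]$ from (B3) and a normalizing-constant part of order $m_n(\max_j\sup_t|\partial_t^2D_{j,h}|)^2$ controlled by \eqref{D-del-est} with $l=2$, both tending to zero. (A2) holds because the centered score of a genuine conditional density integrates to zero (differentiation under the integral sign being justified by \eqref{zeta-integrable}) and has finite conditional second moment by the first part of (B3). (A3) is a Lindeberg condition dominated, through Chebyshev's inequality, by the fourth-moment sum in (B3). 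Finally (A4) will be identified with (B4) after the substitution carried out in the next step.

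It remains to replace the true-score objects $V_n^{\tilde P},\mct_n^{\tilde P}$ produced by Theorem~\ref{L2regu-thm} by $\tilde{V}_n,\tilde{\mct}_n$. Their difference has two negligible sources. First, the normalizing-constant correction contributes $\sum_j\epsilon_n^\top\partial_\alpha\log d_j$ to $V_n^{\tilde P}-\tilde{V}_n$, with components bounded by $m_n\max_j\sup_t|\partial_tD_{j,h}|$, and a term of order $m_n(\max_j\sup_t|\partial_tD_{j,h}|)^2$ to $\mct_n^{\tilde P}-\tilde{\mct}_n$; both vanish by \eqref{D-del-est} with $l=1$ (here one uses that $\tilde{E}_{\alpha_0}[\partial_\alpha\log\tilde{p}_j\mid\mca_{j-1,n}]=\partial_\alpha\log d_j$, so the first-order terms cancel in the conditional second moment). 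Second, the indicator $1_{\bar{X}_{j-1}(K_{n,j-1})}$ in $\tilde{\eta}_j$ alters the two sums only on $\cup_jK_{n,j-1}^c$, an event of arbitrarily small $\tilde{P}_{\alpha,n}$-probability by \eqref{tail-prob-est2}. Slutsky's theorem then yields Condition~(L) for $\{\tilde{P}_{\alpha,n}\}$ with $\tilde{\mct}_n$ and $\tilde{V}_n$, and $\eta_j^{\tilde P}$ indeed makes (A4) coincide with (B4). Moreover $\log(d\tilde{P}_{\alpha_h,n}/d\tilde{P}_{\alpha_0,n})=h^\top\tilde{V}_n-\frac{1}{2}h^\top\tilde{\mct}_nh+o_{\tilde{P}_{\alpha_0,n}}(1)$ converges in law by (B4), hence is tight and bounded away from $-\infty$ in probability, which is precisely the hypothesis \eqref{tildeP-cond} of Theorem~\ref{log-likelihood-approx-thm}.

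For the transfer, Theorem~\ref{log-likelihood-approx-thm} gives $\sup_\alpha\lVert P_{\alpha,n}-\tilde{P}_{\alpha,n}\rVert\to0$ together with $\log(dP_{\alpha_h,n}/dP_{\alpha_0,n})-\log(d\tilde{P}_{\alpha_h,n}/d\tilde{P}_{\alpha_0,n})\to0$ in $\tilde{P}_{\alpha_0,n}$-probability. Combined with the expansion of the previous step, this furnishes Condition~(L)(1) for $\{P_{\alpha,n}\}$ in $\tilde{P}_{\alpha_0,n}$-probability, and since $|P_{\alpha_0,n}(A)-\tilde{P}_{\alpha_0,n}(A)|\le\lVert P_{\alpha_0,n}-\tilde{P}_{\alpha_0,n}\rVert$ uniformly in $A$, every in-probability statement and every weak-convergence statement passes from $\tilde{P}_{\alpha_0,n}$ to $P_{\alpha_0,n}$; in particular (B4) delivers Condition~(L)(2) under $P_{\alpha_0,n}$ as well. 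This establishes Condition~(L) for $\{P_{\alpha,n}\}$ with $\tilde{\mct}_n,\tilde{V}_n$, and when $\mct$ is a.s.\ positive definite and $\epsilon_n$ is symmetric positive definite the LAMN property follows through the truncation $\dot{\mct}_n$ of Remark~\ref{LAMN-remark}. I expect the main obstacle to be (A1): establishing $L^2$-differentiability of the normalized thresholding densities with the second-order remainder genuinely absorbed by (B3), while keeping the normalizing-constant derivatives negligible through \eqref{D-del-est}.
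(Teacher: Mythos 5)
Your proposal is correct and follows essentially the same route as the paper: verify (A1)--(A4) of Theorem~\ref{L2regu-thm} for the normalized family $\{\tilde{P}_{\alpha,n}\}$ using the centered score $\partial_\alpha\log(\tilde{p}_j/d_j)$, control its discrepancy from $\tilde{\eta}_j$ (and hence from $\tilde{V}_n,\tilde{\mct}_n$) by the normalizing-constant bounds \eqref{D-del-est}, and then transfer Condition~(L) to $\{P_{\alpha,n}\}$ via Theorem~\ref{log-likelihood-approx-thm} and Remark~\ref{LAMN-remark}. Your explicit verification of \eqref{tildeP-cond} from the limiting log-likelihood expansion is a detail the paper leaves implicit, but it is the intended justification.
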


If $\mct$ is nonrandom (which corresponds to the case of {LAN}), we can simplify Condition (B4).
\begin{description}
\item[Assumption (B4$'$).] There exists a nonrandom $d\times d$ symmetric, nonnegative definite matrix $\mct$ such that
$\tilde{\mct}_n\to \mct$ in $\tilde{P}_{\alpha_0,n}$-probability.
\end{description}

\begin{corollary}\label{B4'-cor}
Assume (B1), (B2), (B3), and (B4$'$). Then $\{P_{\alpha,n}\}_{\alpha,n}$ satisfies Condition (L). If further $\mct$ in (B4$'$) is positive definite and $\epsilon_n$ is {\colorp symmetric and} positive definite for any $n\in\mbbn$, $\{P_{\alpha,n}\}_{\alpha,n}$ satisfies the LAN property at $\alpha=\alpha_0$.
\end{corollary}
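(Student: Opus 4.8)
The plan is to deduce the corollary directly from Theorem~\ref{density-approx-thm} by verifying that, in the present situation, Assumption (B4$'$) together with (B3) implies Assumption (B4). Once (B4) is available, (B1)--(B4) all hold, so Theorem~\ref{density-approx-thm} yields Condition (L) with $\tilde{\mct}_n$ and $\tilde{V}_n$ and, under the additional positive-definiteness hypotheses, the LAMN property at $\alpha=\alpha_0$. Since the limiting matrix $\mct$ in (B4$'$) is nonrandom, this LAMN property is exactly the LAN property by the definition of LAN (a LAMN property whose limiting matrix is nonrandom). Thus the whole corollary reduces to the implication (B3) and (B4$'$) $\Rightarrow$ (B4).

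To verify (B4), I must show
\[
\mathcal{L}((\tilde{V}_n,\tilde{\mct}_n)\,|\,\tilde{P}_{\alpha_0,n})\to \mathcal{L}(\mct^{1/2}W,\mct),\qquad W\sim N(0,I_d).
\]
Because $\mct$ is a constant matrix and (B4$'$) gives $\tilde{\mct}_n\to\mct$ in $\tilde{P}_{\alpha_0,n}$-probability, the second coordinate converges to a constant; hence the desired joint convergence follows from the marginal statement $\tilde{V}_n\to N(0,\mct)=\mct^{1/2}W$ in $\tilde{P}_{\alpha_0,n}$-distribution (convergence in probability to a constant combines with convergence in distribution to give joint convergence). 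I would establish the marginal statement by the martingale central limit theorem applied to the array $\{\tilde{\eta}_j\}_{j=1}^{m_n}$ adapted to $\{\mca_{j,n}\}$ under $\tilde{P}_{\alpha_0,n}$, using three ingredients. First, the approximate martingale-difference property: differentiating $D_{j,h}(\bar{x}_{j-1},t)=\int \tilde{p}_j(\alpha_{th})\,\mu_{n,j}(dx_j)$ under the integral sign (justified by (B2)) and using that, by (B3), the zero set of $\tilde{p}_j$ does not depend on $\alpha$, so the indicator $1_{\{\tilde{p}_j(\alpha_0)\neq 0\}}$ is harmless, one finds $\tilde{E}_{\alpha_0}[\tilde{\eta}_{j}\mid\mca_{j-1,n}]=1_{\bar{X}_{j-1}(K_{n,j-1})}\,(\partial_t D_{j,e_i}(\bar{x}_{j-1},0)/d_j(\alpha_0))_{i=1}^d$, and by (\ref{D-del-est}) with $l=1$ the total centering $\sum_{j}\tilde{E}_{\alpha_0}[\tilde{\eta}_j\mid\mca_{j-1,n}]$ is dominated by $m_n\max_j\sup_t|\partial_t D_{j,h}|\to 0$, hence asymptotically negligible. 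Second, the conditional covariance of the $\mca_{j-1,n}$-centered array converges to $\mct$: this is (B4$'$) up to the correction $\sum_j|\tilde{E}_{\alpha_0}[\tilde{\eta}_j\mid\mca_{j-1,n}]|^2\le m_n\max_j\sup_t|\partial_t D_{j,h}|^2\to 0$. Third, the conditional Lindeberg condition holds since $\sum_j\tilde{E}_{\alpha_0}[|\tilde{\eta}_j|^2 1_{\{|\tilde{\eta}_j|>\varepsilon\}}\mid\mca_{j-1,n}]\le \varepsilon^{-2}\sum_j\tilde{E}_{\alpha_0}[|\tilde{\eta}_j|^4\mid\mca_{j-1,n}]$, and the right-hand side is controlled at $t=0$ by the $|\zeta_{j,t}^{1,h}|^4$ term in (B3), which tends to $0$ in $\tilde{P}_{\alpha_0,n}$-probability.

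The main obstacle will be the careful handling of the \emph{approximate} martingale structure in the first ingredient: the conditional mean of $\tilde{\eta}_j$ is not exactly zero but equals $\partial_t D_{j,e_i}/d_j(\alpha_0)$, so one must (i) legitimately differentiate the normalizing constant $d_j$ under the integral sign, invoking the integrability bound (\ref{zeta-integrable}) of (B2); (ii) bound the normalizers $d_j(\alpha_0)$ away from $0$ so that dividing by them does not spoil the estimates; and (iii) confirm that passing to the centered differences leaves both the limiting covariance and the Lindeberg bound unchanged. Each of these is a quantitative consequence of the estimate (\ref{D-del-est}) in (B2) and the fourth-moment control in (B3).

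Once these points are settled, the martingale central limit theorem gives $\tilde{V}_n\to N(0,\mct)$ under $\tilde{P}_{\alpha_0,n}$, which combined with (B4$'$) yields the joint convergence in (B4). Theorem~\ref{density-approx-thm} then provides Condition (L) with $\tilde{\mct}_n$ and $\tilde{V}_n$, and, when $\mct$ is positive definite and $\epsilon_n$ is symmetric and positive definite, the LAMN property at $\alpha=\alpha_0$; the nonrandomness of $\mct$ upgrades this conclusion from LAMN to LAN, completing the proof.
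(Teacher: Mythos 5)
Your proposal is correct and follows essentially the same route as the paper: the paper likewise reduces the corollary to supplying the missing CLT via a martingale central limit theorem, using the Lindeberg condition from the fourth-moment bound in (B3) (at $t=0$), the conditional covariance convergence from (B4$'$), and the control of the normalizers $d_j$ and their derivatives $\partial_t D_{j,h}$ from (\ref{D-del-est}). The only (cosmetic) difference is that the paper applies the CLT to the exactly centered score $\eta_j=\partial_\alpha(\tilde{p}_j/d_j)/(\tilde{p}_j/d_j)$ of the normalized densities, for which $\tilde{E}_{\alpha_0}[\eta_j|\mca_{j-1,n}]=0$, and then passes to $\tilde{\eta}_j$ via the relation $\epsilon_n^\top\eta_j=\tilde{\eta}_j+o_p$-correction, whereas you work directly with $\tilde{\eta}_j$ and absorb the nonzero conditional mean as an asymptotically negligible centering term; the underlying estimates are identical.
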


\begin{remark}
Even when $\mct$ is random, Sweeting~\cite{swe80} is useful to omit checking the convergence of $\tilde{V}_n$ in (B4).
\end{remark}

\begin{remark}
We expect that such techniques of transition density approximation can be applied to models other than the jump-diffusion model.
If we can find an approximation of the transition density function of a statistical model such that the asymptotic behavior of the approximation can be specified, these techniques enable us to show the LAMN property of the statistical model.
One such an example is the statistical model of nonsynchronously observed diffusion processes in Ogihara~\cite{ogi15}. The likelihood function is given by the integral of the likelihood function for synchronized observations with respect to unobserved variables.
The LAMN property for this model is shown by introducing the likelihood approximation obtained by cutting off the domain of integration, and identifying the asymptotic behavior of the approximated likelihood function (see Lemma 4.3 and subsequent discussions in~\cite{ogi15}).
So the techniques in this section enable us to simplify the proof of LAMN for this model.
\end{remark}

\subsection{The LAN property for jump-diffusion processes}\label{jump-diffusion-LAN-subsection}

In this section, we show the LAN property of jump-diffusion processes.
{\colorp Let $(\Omega, \mathcal{F},(\mcf_t)_{t\geq 0}, P)$ be a stochastic basis.
Let $\Theta_i\subset \mbbr^{d_i}$ be an open set for $i\in\{1,2\}$, and $\Theta=\Theta_1\times \Theta_2$.
We set $\alpha=(\sigma,\theta)\in \Theta_1\times \Theta_2$ and its true value is denoted by $\alpha_0=(\sigma_0,\theta_0)$.
For any $\alpha\in \Theta$, let} $X^\alpha=(X_t^\alpha)_{t\geq 0}$ be an $m$-dimensional c\`adl\`ag ${\bf F}$-adapted process satisfying a stochastic differential equation:
\EQ{\label{jump-SDE} dX_t^\alpha=a(X_{t}^\alpha,\theta)dt+b(X_{t}^\alpha,\sigma)dW_t+\int_E zN_\theta(dt,dz),}
where {\colorp ${\bf F}=(\mcf_t)_{t\geq 0}$, $E=\mbbr^m\backslash\{0\}$, $W=(W_t)_{t\geq 0}$ is an $m$-dimensional standard ${\bf F}$-Wiener process, and
$N_\theta$ is a Poisson random measure on $\mbbr_+\times E$ relative to ${\bf F}$, whose mean measure is $f_\theta(z)dzdt$ with $\int_E f_\theta(z)dz<\infty$.}
The coefficients $a: \mbbr^m\times \Theta_2\mapsto\mbbr^m$ and $b: \mbbr^m\times \Theta_1\mapsto {\colorp \mbbr^m}\otimes \mbbr^m$ are measurable functions and satisfy Assumption (C1) below.
We assume that the distribution of $X_0^\alpha$ does not depend on $\alpha\in \Theta$.
We denote $X_t=X_t^{\alpha_0}$ and $N(dt,dz)=N_{\theta_0}(dt,dz)$.
We suppose that we observe high-frequency data $\{X_{kh_n}\}_{k=0}^n$ from the solution process $X=(X_t)_{t\geq 0}$. $\{h_n\}_{n\in\mbbn}$ is a positive sequence with $h_n\to 0$ and $nh_n\to \infty$.
For matrices $(M_i)_{i=1}^l$, let
\EQQ{{\rm diag}((M_i)_{i=1}^l)=\MAT{ccc}{M_1 & O & O \\ O & \ddots & O \\ O & O & M_l}.}

\begin{description}
\item[Assumption (C1).]
The derivatives $\partial_x^i\partial_\theta^j a(x,\theta)$ and $\partial_x^i\partial_\sigma^jb(x,\sigma)$ exist and are continuous on {\colorp $\mbbr^m\times \Theta_2$ and $\mbbr^m\times \Theta_1$}, respectively, for $i,j\in \{0,1,2,3,4\}$ such that $i+j\leq 4$.
Moreover, there exist positive constants $C_1$ and $\kappa$ such that
\begin{equation*}
|a(x,\theta)|\leq C_1(1+|x|), \quad |\partial_x a(x,\theta)|+|b(x,\sigma)|+|\partial_xb(x,\sigma)|\leq C_1,
\end{equation*}
\begin{equation*}
|\partial_x^i\partial_\theta^j a(x,\theta)|+|\partial_x^i\partial_\sigma^jb(x,\sigma)|\leq C_1(1+|x|)^\kappa
\end{equation*}
for all $i,j\in \{0,1,2,3,4\}$ satisfying $i+j\leq 4$, and $(\theta,\theta',\sigma,\sigma',x)$.

\item[Assumption (C2).] $b(x,\sigma)$ is symmetric, positive definite, and there exists a positive constant $C_2$ such that
\begin{equation*}
C_2^{-1}I_m\leq b(x,\sigma)\leq C_2I_m
\end{equation*}
for any $x$ and $\sigma$.

\item[Assumption (C3).] $X$ is ergodic; that is, there exists a stationary distribution {\colorp $\pi$} such that
\EQ{\label{ergod-cond} \frac{1}{T}\int_0^Tg(X_t)dt\overset{P}\to\int g(x)d\pi{\colorp (x)}}
as $T\to\infty$ for any $\pi$-integrable function $g$. 
Moreover,
\EQ{\label{moment-cond} \sup_{\alpha\in \Theta,t\geq 0}E[|X_t^\alpha|^q]<\infty}
for $q>0$.
\end{description}

Let $F_\theta$ be a density function satisfying $f_\theta= \lambda F_\theta$ with some positive constant $\lambda=\lambda(\theta)$.
Hereafter we write the support of any function $g$ and its boundary {\colorp in $E$} as $\supp(g)$ and $\p \supp(g)$, respectively.
{\colorp Let $d(z,A)=\inf_{y\in A}|z-y|$ for $z\in \mbbr^m$ and $A\subset \mbbr^m$ ($d(z,\emptyset)=\infty$ by convention).}

\begin{description}
\item[Assumption (C4).]
\begin{enumerate}
\item The zero points of $F_\theta$ do not depend on $\theta$.
\item {\colorp The derivative $\partial_\theta^l\lambda$ exists and bounded for $l\in \{0,1,2,3\}$.}
\item There exist constants $\ep>0$, $\rho \in (0,1/2)$, and $N_0\in\mbbn$ fulfilling that 
\EQ{\label{jzero-cond}\int_{\{z: d(z, \partial\supp(F_\theta))\leq h_n^\rho\}} F_\theta(z)dz \leq h_n^\ep}
for all $n\geq N_0$.
%\tcb{For any $x\in \mbbr^d$, there exist positive constants $\del$ and $C_4$ satisfying that for all $j\in\{1,\dots,n\}$,
%\begin{equation}\label{jzero-cond}
%\sup_\alpha P_\alpha \left(X_{t_j}-x\in \{z: F_\theta(z)=0\}|\D_j N=1, X_{t_{j-1}}=x\right)\leq C_4h_n^\del(1+|x|)^{C_4}.
%\end{equation}
%}
\item {\colorp The derivative $\partial_\theta^l f_\theta(z)$ exists and is continuous with respect to $\theta\in \Theta_2$ for any $l\in \{0,1,2,3\}$ and $z\in E$. Moreover,} there exist constants $\gamma\geq 0$, {\colorp $C_3>0$,} and ${\colorp \epsilon'}>0$ such that
\EQ{\label{jcond} |F_\theta(z)|1_{\{|z|\leq {\colorp \epsilon'}\}}\leq C_3|z|^\gamma,
 \quad |\partial_\theta^l\log f_\theta(z)|1_{\{F_\theta(z)\neq0\}}\leq C_3(1+|z|)^{C_3},}
\EQ{\label{jcond2} |\partial_\theta^l\log f_\theta(z_1)-\partial_\theta^l\log f_\theta(z_2)|1_{\{F_\theta(z_1)F_\theta(z_2)\neq0\}}\leq C_3|z_1-z_2|(1+|z_1|+|z_2|)^{C_3}}
for any {\colorp $z,z_1,z_2\in E$, $\theta\in \Theta_2$, and $l\in \{1,2,3\}$}.
\item $\sup_\theta \int|z|^pf_\theta(z)dz<\infty$ for any $p\geq 1$, and {\colorp there exists $\eta>0$ such that}
\EQ{\label{balance-cond} n^{1+\eta}h_n^{1+((m+\gamma)/2)\wedge 1}\to 0}
as $n\to\infty$.
\end{enumerate}
\end{description}

Let $\Gamma={\rm diag}((\Gamma_1, \Gamma_2))$,
where $S(x,\sigma)=b^2(x,\sigma)$,
\EQNN{[\Gamma_1]_{ij}&=\frac{1}{2}\int {\rm tr}(\partial_{\sigma_i}SS^{-1}\partial_{\sigma_j}SS^{-1})(x,\sigma_0)d\pi(x), \\
[\Gamma_2]_{ij}&=\int (\partial_{\theta_i} a)^\top S^{-1}(\partial_{\theta_j} a)(x,\alpha_0)d\pi(x)
+{\colorp \int_E} \frac{\partial_{\theta_i} f_{\theta_0}\partial_{\theta_j} f_{\theta_0}}{f_{\theta_0}} 1_{\{f_{\theta_0}\neq 0\}}(y)dy.
}

\begin{description}

\item[Assumption (C5).] $\Gamma$ is positive definite.
\end{description}

Regarding our technical assumptions, we make some comments below.
\begin{itemize}
\item Under (C1), the existence and uniqueness of the solution is ensured (for details, see Applebaum~\cite{app09}).
(C1) is also important in considering the derivatives of the flow and Malliavin calculus on the continuous part of \eqref{jump-SDE}.
\item
For sufficient conditions of ergodicity (\ref{ergod-cond}), we refer the reader to Masuda~\cite{mas08}.
We need the moment condition (\ref{moment-cond}) of $X_t^\alpha$ uniformly in $\alpha$.
This condition is a bit stronger than the one usually assumed in the studies of statistical estimation for jump-diffusion processes (estimate for only $\alpha=\alpha_0$) since evaluation of transition densities around $\al_0$ is essential for the LAN property.
However, this condition can be shown similarly to a standard procedure.
See Theorem~2.2 in Masuda~\cite{mas07} for the details.
\item Since we cannot observe fluctuation of jumps directly, we replace it by the increments of $X$ exceeding a threshold on the estimation of $\theta$. However, these increments may not belong to the support of $F_\theta$ typically for one-sided jumps or bounded jumps. 
\eqref{jzero-cond} is useful for controlling such a ({\colorp $\mcf_{t_{j-1}}$}-conditional) probability, for more details, see Lemma \ref{incprob}. 
\item {\colorp Suppose that $\sup_\theta \int_E F_\theta^p(z)dz<\infty$ for some $p>1$ and $\partial {\rm supp}(F_\theta)=\{z_1,\cdots, z_k\}$ for some $k\in \mbbn$, and $z_j\in E \ (1\leq j\leq k)$. 
Then the set $\{z;d(z,\partial {\rm supp}(F_\theta))\leq h_n^\rho\}$ is included in union of 	closed $k$ balls centered at $z_1,\cdots, z_k$ of radius $h_n^\rho$, and hence H\"older's inequality yields
\EQQ{\int_{\{z;d(z,\partial {\rm supp}(F_\theta))\leq h_n^\rho\}}F_\theta(z)dz
\leq \bigg(\int_E F_\theta^p(z)dz\bigg)^{1/p}\bigg(\int_{\{z;d(z,\partial {\rm supp}(F_\theta))\leq h_n^\rho\}}dz\bigg)^{1/q}\leq Ck^{1/q}h_n^{\rho m/q},}
where $q=p/(p-1)$.
Then (\ref{jzero-cond}) is satisfied for sufficiently large $n$. 
}
%In this item, we simply assume that $\lambda$ itself is an element of $\theta$ and {\colorp $m=1$}. Then \eqref{jzero-cond} holds when $z\mapsto F_\theta(z)$ is continuous on $\mbbr$. As a matter of course, the continuity can be relaxed. Indeed, in positive jump case, \eqref{jzero-cond} is satisfied when $z\mapsto F_\theta(z)$ is right continuous at 0. 
%As for the latter part of \eqref{jcond} and \eqref{jcond2}, they immediately follow from Taylor's theorem if $z\mapsto {\colorp \log f_\theta(z)}$ is differentiable and its derivative is of at most polynomial growth uniformly in $\theta$.
%Hence there are several {\colorp candidates} for $F_\theta$, for example, the density function of normal distribution and exponential distribution.
\item When $m\geq 2$, (\ref{balance-cond}) becomes $n^{1+\eta}h_n^2\to 0$ for some $\eta>0$, which is almost the same as the one usually required in the study of statistical estimation for jump-diffusion processes.
We can say the same thing when $m=1$ and $\gamma\geq 1$. This condition is weaker than the corresponding condition in Shimizu and Yoshida~\cite{shi-yos06} ($\gamma>3$ is required).
We can also consider the case $m=1$ and $\gamma\in [0,1)$. In this case, the convergence rate of $h_n$ becomes restrictive ($n^{2+\eta}h_n^3\to 0$ for some $\eta>0$ in the worst case).
These things happen because {\colorp we need to detect jumps by using the increment $|X_{kh_n}-X_{(k-1)h_n}|$. Roughly speaking, for $\rho\in (0,1/2)$, we have $|X_{kh_n}-X_{(k-1)h_n}|\leq h_n^\rho$ with high probability if no jumps in $((k-1)h_n,kh_n]$. Then we judge jumps occur when $|X_{kh_n}-X_{(k-1)h_n}|>h_n^\rho$.}
%we approximate the transition density functions of jump-diffusion processes by thresholding density functions, and we detect jumps if $|X_{kh_n}-X_{(k-1)h_n}|>h_n^\rho$ for each time interval $((k-1)h_n,kh_n]$, where $\rho\in (0,1/2)$.
If the dimension of $X_t$ is large or $\gamma$ is large, then the probability that the absolute jump size is equal to or less than $h_n^\rho$ becomes very small, and consequently, jump detection and approximation by thresholding densities work well.
{\colorp Otherwise, we need to set $h_n$ small to detect jumps.}
\end{itemize}

%\vspace{-15pt}
%\tcb{\begin{lemma}\label{suff-cond}
%Suppose that there exists {\colord positive constants $\ep$ and $\rho \in (0,1/2)$} fulfilling $\int_{\{z: d(z, \partial\supp(F_\theta))\leq h_n^\rho\}} F_\theta(z)dz{\colord \leq}h_n^\ep$
%{\colord for sufficiently large $n$}.
%Then under (C1)--(C3), \eqref{jzero-cond} holds.
%\end{lemma}
%We here remark that the condition $\int_{\{z: d(z, \partial\supp(F_\theta))\leq h_n^\rho\}} F_\theta(z)dz=h_n^\ep$ holds when $F_\theta$ is continuous.
%}

{\colorp
\begin{example}
Condition (C4) is a bit complicated, so we will see some examples of $F_\theta$ which satisfies (C4). Let $\lambda$ is a smooth function of $\theta$ satisfying
$\sup_\theta|\partial_\theta^l\lambda|<\infty$ 
for $l\in \{0,1,2,3\}$ and $\inf_\theta \lambda>0$.
\begin{enumerate}
\item Let
\EQQ{F_\theta(z)=\frac{1}{(2\pi \det \Sigma)^{m/2}}\exp\bigg(-\frac{1}{2}(z-\mu)^\top \Sigma^{-1}(z-\mu)\bigg), \quad ({\rm normal \  distribution})}
where $\mu$ and $\Sigma$ are smooth $\mbbr^m$- and $\mbbr^m\otimes \mbbr^m$-valued functions of $\theta$, respectively, such that
$\sup_\theta(|\PT^l\mu|\vee \lVert \PT^l\Sigma\rVert_{{\rm op}})<\infty$
for $l\in \{0,1,2,3\}$ and $\sup_\theta \lVert \Sigma^{-1}\rVert_{{\rm op}}<\infty$.
Then we can easily check (\ref{jzero-cond})--(\ref{jcond2}). Therefore, (C4) is satisfied if there exists $\eta>0$ such that
\EQ{\label{hn-cond} \DIV{ll}{n^{2+\eta}h_n^3  &\to 0 \quad (m=1) \\
n^{1+\eta}h_n^2&\to 0 \quad (m\geq 2)}}
\item Let $m=1$ and
\EQQ{F_\theta(z)=\frac{1}{\Gamma(\alpha)\beta^\alpha}z^{\alpha-1}e^{-z/\beta}1_{\{z>0\}}, \quad ({\rm Gamma \ distribution})}
where $\alpha$ and $\beta$ are smooth $\mbbr$-valued functions of $\theta$ such that $\sup_\theta(|\partial_\theta^l\alpha|\vee |\partial_\theta^l\beta|)<\infty$ for $l\in \{0,1,2,3\}$, $\inf_\theta \beta>0$, and $\inf_\theta \alpha\geq 1$.
Then $\partial {\rm supp}(F_\theta)=\emptyset$, that implies (\ref{jzero-cond}). Moreover, we have
\EQQ{\log f_\theta(z)1_{\{z>0\}}=\bigg\{(\alpha-1)\log z-\frac{z}{\beta}-\alpha\log \beta-\log \Gamma(\alpha)+\log \lambda\bigg\} 1_{\{z>0\}}.}

If $\partial_\theta \alpha\equiv 0$ (for example, the case of exponential distributions $\alpha\equiv 1$), then $\log f_\theta$ satisfies (\ref{jcond}) and (\ref{jcond2}),
and (C4) holds if there exists $\eta>0$ such that $n^{1+\eta}h_n^{1+(\alpha/2)\wedge 1}\to 0$ as $n\to\infty$.

If $\partial_\theta \alpha\neq 0$ for some $\theta'$, then (\ref{jcond}) is not satisfied because $\lim_{z\searrow 0}|\PT \log f_{\theta'}(z)| \to \infty$, and hence (C4) does not hold. 

\item Let $m=1$ and 
\EQQ{F_\theta(z)=\frac{1}{\Gamma(\alpha_1)\beta_1^{\alpha_1}}z^{\alpha_1-1}e^{-z/\beta_1}1_{\{z>0\}}+\frac{1}{\Gamma(\alpha_2)\beta_2^{\alpha_2}}(-z)^{\alpha_2-1}e^{z/\beta_2}1_{\{z<0\}}, \quad ({\rm two\mathchar`-sided~Gamma~distribution})}
where $\alpha_1$, $\alpha_2$, $\beta_1$, and $\beta_2$ are smooth $\mbbr$-valued functions of $\theta$ such that $\sup_\theta(|\partial_\theta^l\alpha_j|\vee |\partial_\theta^l\beta_j|)<\infty$, $\inf_\theta \beta_j>0$, and $\inf_\theta \alpha_j\geq 1$ for $l\in \{0,1,2,3\}$ and $j\in\{1,2\}$.

Similarly to the above example, (C4) holds if $\partial_\theta \alpha_j\equiv 0$ for $j\in \{1,2\}$ and there exists $\eta>0$ such that $n^{1+\eta}h_n^{1+(\alpha_1\wedge \alpha_2\wedge 2)/2}\to 0$. (C4) does not satisfied if $\partial_\theta \alpha_j\neq 0$ for some $j\in\{1,2\}$ and $\theta$.
\end{enumerate}
\end{example}
}

$ $

Let $\{P_{\alpha,n}\}_{\alpha,n}$ be the family of probability measures generated by $(X_{kh_n}^\alpha)_{k=0}^n$.
\begin{theorem}\label{jump-LAN-thm}
Assume (C1)--(C5). Then, $\{P_{\alpha,n}\}_{\alpha,n}$ satisfies LAN at $\alpha=\alpha_0$
with $\mct(\alpha_0)=\Gamma$ and $\epsilon_n={\rm diag}(n^{-1/2}I_{d_1},(nh_n)^{-1/2}I_{d_2}))$.
\end{theorem}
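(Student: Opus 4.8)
The plan is to deduce Theorem~\ref{jump-LAN-thm} from Corollary~\ref{B4'-cor}: because the asserted limit $\mct(\alpha_0)=\Gamma$ is nonrandom, it suffices to construct thresholding transition densities $\tilde{p}_j$ and sets $K_{n,j}$ for which Assumptions (B1), (B2), (B3), and (B4$'$) hold, with $\mu_{n,j}$ Lebesgue measure on $\mbbr^m$, $m_n=n+1$ (the first factor $p_1$, the law of $X_0$, carrying no information about $\alpha$), and $\epsilon_n={\rm diag}(n^{-1/2}I_{d_1},(nh_n)^{-1/2}I_{d_2})$. I take $K_{n,j}=\{(x_l)_{l=0}^n:\max_{0\le l\le j}|x_l|\le n^\del\}$ for small $\del>0$ as in Section~\ref{density-approx-subsection}, and I define $\tilde{p}_j(\alpha,x_j,\bar{x}_{j-1})$ by the thresholding prescription of Shimizu and Yoshida~\cite{shi-yos06}: on the no-jump region $\{|x_j-x_{j-1}|\le h_n^\rho\}$ I use the Gaussian continuous-Euler density with mean $x_{j-1}+a(x_{j-1},\theta)h_n$ and covariance $S(x_{j-1},\sigma)h_n$, and on the jump region $\{|x_j-x_{j-1}|>h_n^\rho\}$ I use the one-jump density proportional to $h_nf_\theta(x_j-x_{j-1})$. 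This surrogate has explicit, smooth $\alpha$-dependence, so that its score and observed information can be computed directly, which is exactly the point of passing through Theorem~\ref{density-approx-thm} rather than working with the true mixture $p_j$.

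The first step is (B1). The tail bounds \eqref{tail-prob-est1} and \eqref{tail-prob-est2} follow from the uniform moment bound \eqref{moment-cond}: Markov's inequality gives $\sup_\alpha P_{\alpha,n}(\cup_jK_{n,j}^c)\le\sum_k\sup_\alpha P_{\alpha,n}(|X_{kh_n}|>n^\del)\le n\,n^{-\del q}\sup_{\alpha,t}E[|X_t^\alpha|^q]$, which is arbitrarily small for $q$ large, and the bound for $\tilde{P}_{\alpha,n}$ then follows from Lemma~\ref{B1-suff-lemma} (arranging the domination $\tilde{p}_j\le p_j$, or else checking \eqref{tail-prob-est2} directly). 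The substantive content is the $L^1$ approximation \eqref{p-diff-est}, namely $n\cdot\sup\int|p_j-\tilde{p}_j|\,\mu_{n,j}(dx_j)\to0$ uniformly over $\bar{x}_{j-1}\in\bar{X}_{j-1}(K_{n,j-1})$. Here one decomposes the exact transition density by the number of jumps in $((j-1)h_n,jh_n]$: the two-or-more-jump contribution carries mass $O(h_n^2)$, summing to $O(nh_n^2)\to0$ under \eqref{balance-cond}; the no-jump contribution is compared with the Gaussian part via flow and Malliavin-calculus estimates for the continuous component under (C1)--(C2); and the one-jump contribution is compared with $h_nf_\theta$. The mismatch in the jump region, where a thresholded increment exceeding $h_n^\rho$ may fall near or outside $\partial\supp(F_\theta)$, is controlled through a conditional jump-probability estimate and the boundary bound \eqref{jzero-cond}, whose exponent $h_n^\ep$ is precisely what renders this mismatch negligible after multiplication by $n$.

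I expect this $L^1$ estimate, together with the direct verification of \eqref{D-del-est} inside (B2), to be the main obstacle, precisely because the transition density is a mixture of components with incomparable asymptotics and Aronson-type two-sided Gaussian bounds are unavailable for jump-diffusions; this is why the convenient sufficient condition \eqref{B2-suff} must be replaced by a direct estimate on the derivatives of the normalizing constants $D_{j,h}=d_j(\bar{x}_{j-1},\alpha_{th})$. For the remaining part of (B2), smoothness of the Gaussian factor and of $f_\theta$ (via (C4.4)) gives existence and continuity of the $t$-derivatives of $p_j$, $\tilde{p}_j$, and $D_{j,h}$ on $\bar{X}_{j-1}(K_{n,j-1})$, while the integrability \eqref{zeta-integrable} follows from the polynomial log-derivative bounds \eqref{jcond}--\eqref{jcond2} together with $\sup_\theta\int|z|^pf_\theta(z)dz<\infty$; the bound \eqref{D-del-est} itself is obtained by differentiating the thresholded mass in $\alpha$ and using \eqref{jzero-cond} to absorb the boundary term.

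The last two assumptions are then moment and ergodic arguments on the smooth surrogate $\tilde{p}_j$. For (B3), the scaling built into $\epsilon_n$ makes $\zeta_{j,t}^{1,h}$ of order $n^{-1/2}$ on both the volatility block (where $\partial_\sigma\log(\mathrm{Gaussian})=O(1)$ times $n^{-1/2}$) and the drift--jump block (where $\partial_\theta\log(\mathrm{Gaussian})=O(h_n^{1/2})$ times $(nh_n)^{-1/2}$, and the jump score $\partial_\theta\log f_\theta$ lives on a region of $\tilde{P}$-probability $O(h_n)$); hence $\sum_j\tilde{E}_{\alpha_{th}}[|\zeta_{j,t}^{1,h}|^4\mid\mca_{j-1,n}]=O(n\cdot n^{-2})\to0$, with the jump part of order $O((nh_n)^{-1})\to0$ since $nh_n\to\infty$, and similarly for the second-order term, giving the Lindeberg-type negligibility. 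For (B4$'$) I evaluate the conditional information $\tilde{E}_{\alpha_0}[\tilde{\eta}_j\tilde{\eta}_j^{\top}\mid\mca_{j-1,n}]$ block by block: the $\sigma$-block returns $\tfrac12{\rm tr}(\partial_{\sigma_i}SS^{-1}\partial_{\sigma_j}SS^{-1})(X_{(j-1)h_n},\sigma_0)$, the $\theta$-block returns $(\partial_{\theta_i}a)^\top S^{-1}(\partial_{\theta_j}a)(X_{(j-1)h_n},\alpha_0)$ plus the jump information $\int_E\partial_{\theta_i}f_{\theta_0}\partial_{\theta_j}f_{\theta_0}/f_{\theta_0}\,dy$, and the cross terms vanish as $h_n\to0$ because the leading volatility score is even and the drift score odd in the Brownian increment, with an additional rate gain of $O(h_n^{1/2})$. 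Averaging over $j$ and applying the ergodic theorem \eqref{ergod-cond} yields $\tilde{\mct}_n\to\Gamma$ in $\tilde{P}_{\alpha_0,n}$-probability, which is (B4$'$); positive definiteness of $\Gamma$ is (C5), and Corollary~\ref{B4'-cor} then delivers the LAN property with $\mct(\alpha_0)=\Gamma$.
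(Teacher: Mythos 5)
Your overall strategy is the paper's: ignore the $\alpha$-free law of $X_0$, choose thresholding densities $\tilde p_j$ and the sets $K_{n,j}=\{\max_{0\le l\le j}|x_l|\le n^\delta\}$, verify (B1)--(B3) and (B4$'$), and invoke Corollary~\ref{B4'-cor}. But there is a genuine gap in your choice of surrogate density, and it breaks the most delicate condition, \eqref{p-diff-est}. You take $\tilde p_j$ on the no-jump region to be the \emph{Gaussian Euler} density $N(x_{j-1}+a(x_{j-1},\theta)h_n,\,S(x_{j-1},\sigma)h_n)$, whereas the paper takes $\tilde p_j = p^0_j 1_{L_n}(\Delta x_j)+p^1_j1_{L_n^c}(\Delta x_j)$ with $p^0_j=e^{-\lambda h_n}p^{c,h_n}_{x_{j-1},\alpha}$ the \emph{exact} continuous-part transition density (and $p^1_j$ the exact one-jump convolution). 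The distinction is not cosmetic: the one-step $L^1$ (total variation) distance between $p^{c,h_n}_{x_{j-1},\alpha}$ and its Gaussian Euler approximation is of order $\sqrt{h_n}$ whenever $\partial_x b\neq 0$ (the leading Edgeworth correction is $\sqrt{h_n}$ times an odd polynomial, and the absolute value in \eqref{p-diff-est} forbids any cancellation), so your version of $\int|p_j-\tilde p_j|\,dx_j$ is of order $\sqrt{h_n}$ rather than $o(n^{-1})$, and $m_n\cdot\sqrt{h_n}=n\sqrt{h_n}\to\infty$ since $nh_n\to\infty$. The same problem recurs on the jump region: replacing the exact one-jump density $p^1_j$ (a convolution of $F_\theta$ with the continuous flow before and after the jump time) by $h_nf_\theta(\Delta x_j)$ costs an $L^1$ error of order $h_n^{3/2}$ per step under \eqref{jcond2}, and $nh_n^{3/2}$ need not vanish under \eqref{balance-cond}. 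With the paper's choice, by contrast, $p_j-\tilde p_j$ consists only of the $\ge 2$-jump mass ($O(h_n^2)$), the no-jump-but-large-increment mass ($O(h_n^2)$ by Proposition~\ref{gob-prop}), and the one-jump-but-small-increment mass ($O(h_n^{1+(m+\gamma)\rho})$), each of which is $o(n^{-1})$.

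The price of the correct choice is exactly the machinery you hoped to avoid: since $p^{c,h_n}_{x,\alpha}$ has no closed form, its logarithmic derivatives must be represented and bounded by Malliavin calculus (Propositions~\ref{gob-prop2} and~\ref{pc-est}), the score of $p^1_j$ must be shown asymptotically equivalent to $\partial_\theta\log\tilde f(\Delta x_j)$ at the level of conditional moments rather than by an $L^1$ density replacement (Lemmas~\ref{del-PCB-conv} and~\ref{incprob}, Proposition~\ref{p1-est-prop}), and the explicit Gaussian-type scores you compute "directly" are recovered only asymptotically via Gobet's expansions in Lemma~\ref{phi-p0-lemma}. Your block-by-block heuristics for (B3) and (B4$'$) and the ergodic averaging are the right picture of the limit $\Gamma$, but as written they are computations for a surrogate that does not satisfy (B1), so the reduction through Theorem~\ref{density-approx-thm} is not available. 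To repair the argument you must keep the exact continuous-part densities inside $\tilde p_j$ and supply the Malliavin-calculus moment bounds for their logarithmic derivatives.
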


\begin{remark}
Theorem~9.1 in Chapter II of Ibragimov and Has'minski{\u\i}~\cite{ibr-has81} yields the convolution theorem for this model.
We can see that $\Gamma^{-1}$ coincides with the asymptotic variances of the quasi-maximum-likelihood estimator $\hat{\alpha}_n=(\hat{\sigma}_n,\hat{\theta}_n)$ and the Bayes-type estimator $\tilde{\alpha}_n=(\tilde{\sigma}_n,\tilde{\theta}_n)$ in Shimizu and Yoshida~\cite{shi-yos06}
and Ogihara and Yoshida~\cite{ogi-yos11}, respectively. Then we can show that these estimators are asymptotically efficient in the sense of the convolution theorem.
\end{remark}

\begin{remark}
Because $\hat{\alpha}_n$ is asymptotically efficient and the asymptotic covariance of $\hat{\sigma}_n$ and $\hat{\theta}_n$ is equal to zero, Theorem~\ref{jump-LAN-thm}
allows us to construct Wald-type tests testing $H: \sigma=\sigma_0$ (resp.\ $\theta=\theta_0$) against $K: \sigma\neq \sigma_0$ (resp.\ $\theta\neq\theta_0)$.
These tests are asymptotically {\colorp uniformly most powerful} in the sense of Sections 4 and 5 in Choi, Hall, and Schick~\cite{cho-etal96} (see Section~7 and Theorems~2 and 3 in~\cite{cho-etal96})
(though the scaling matrix $\epsilon_n$ is assumed to be $I_d/\sqrt{n}$ in~\cite{cho-etal96}, their proofs remain valid for our setting).
\end{remark}

\begin{remark}
We can generalize Theorem~\ref{jump-LAN-thm} when the jump part in (\ref{jump-SDE}) is given by $\int_E c(X_{t-}^\alpha,z,\theta){\colorp N_\theta}(dt,dz)$ under
similar conditions to [H6], [H7], and [G1] in Ogihara and Yoshida~\cite{ogi-yos11} by introducing the function $\Psi_\theta(y,x)$ in Shimizu and Yoshida~\cite{shi-yos06} and Ogihara and Yoshida~\cite{ogi-yos11}. However, we adopt $c(x,z,\theta)=z$ in our setting to avoid excessive complexity.
\end{remark}

\section{The LAMN property via a conditional $L^2$ regularity condition}\label{L2-regu-section}

In this section, we show Theorem~\ref{L2regu-thm}.
Though we follow the idea of the proof of Theorem~1 in Jeganathan~\cite{jeg82},
some results for expectation and probability are replaced with those for conditional expectation and conditional probability.

We first prepare several lemmas.

\begin{lemma}\label{cond-exp-conv-lemma}
Let $(\mathcal{F}_{n,j})_{j=0}^{N_n}$ be a filtration on some probability space $(\Omega_n,\mathcal{F}_n,P_n)$ for $n\in\mathbb{N}$.
Let $X_{n,j}$ be a nonnegative-valued, $\mathcal{F}_{n,j}$-measurable random variable for $1\leq j\leq N_n$ and $n\in\mathbb{N}$.
Then,
\begin{enumerate}
\item $\sum_{j=1}^{N_n}E_n[X_{n,j}|\mathcal{F}_{n,j-1}]\overset{P_n}\to 0$ as $n\to \infty$ implies that $\sum_{j=1}^{N_n}X_{n,j}\overset{P_n}\to 0$ as $n\to \infty$,
\item $P_n$-tightness of $\{\sum_{j=1}^{N_n}E_n[X_{n,j}|\mathcal{F}_{n,j-1}]\}_{n\in\mbbn}$ implies
$P_n$-tightness of $\{\sum_{j=1}^{N_n}X_{n,j}\}_{n\in\mbbn}$,
\end{enumerate}
where $E_n$ denotes the expectation with respect to $P_n$.
\end{lemma}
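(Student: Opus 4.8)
The plan is to reduce both statements to a single Lenglart-type maximal inequality. Write $S_n=\sum_{j=1}^{N_n}X_{n,j}$ and $A_n=\sum_{j=1}^{N_n}E_n[X_{n,j}|\mathcal{F}_{n,j-1}]$, and for $0\le k\le N_n$ introduce the partial compensators $B_k=\sum_{j=1}^{k}E_n[X_{n,j}|\mathcal{F}_{n,j-1}]$, so that $B_0=0$ and $B_{N_n}=A_n$. Since each $X_{n,j}$ is nonnegative, the conditional expectations are nonnegative, whence $(B_k)_k$ is nondecreasing; moreover $B_k$ is $\mathcal{F}_{n,k-1}$-measurable. The target inequality is
\[
P_n(S_n\ge \epsilon)\le \frac{K}{\epsilon}+P_n(A_n> K)\qquad(\epsilon,K>0),\tag{$\star$}
\]
and I claim that $(\star)$ delivers both (1) and (2) by elementary choices of $K$.

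To prove $(\star)$, fix $K>0$ and set $\tau=\max\{k\ge 0:B_k\le K\}$. Because $(B_k)$ is nondecreasing and $B_j$ is $\mathcal{F}_{n,j-1}$-measurable, one has $\{\tau\ge j\}=\{B_j\le K\}\in\mathcal{F}_{n,j-1}$, so every indicator $1_{\{\tau\ge j\}}$ is predictable and $B_\tau\le K$ holds pathwise. First I would compute, by the tower property together with this predictability,
\[
E_n\Big[\sum_{j=1}^{\tau}X_{n,j}\Big]
=\sum_{j=1}^{N_n}E_n\big[1_{\{\tau\ge j\}}\,E_n[X_{n,j}|\mathcal{F}_{n,j-1}]\big]
=E_n[B_\tau]\le K.
\]
Next I would split over $\{A_n\le K\}$: on this event $B_{N_n}\le K$, hence $\tau=N_n$ and $\sum_{j=1}^{\tau}X_{n,j}=S_n$, so Markov's inequality applied to $\sum_{j=1}^{\tau}X_{n,j}$ gives $P_n(S_n\ge\epsilon,\ A_n\le K)\le K/\epsilon$; bounding the complementary event crudely by $P_n(A_n>K)$ yields $(\star)$.

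Both conclusions then follow. For (1), fix $\epsilon>0$; for each fixed $K>0$ the hypothesis $A_n\overset{P_n}{\to}0$ gives $P_n(A_n>K)\to 0$, so $\limsup_{n}P_n(S_n\ge\epsilon)\le K/\epsilon$ by $(\star)$, and letting $K\downarrow 0$ forces $\limsup_n P_n(S_n\ge\epsilon)=0$; as $\epsilon>0$ is arbitrary, $S_n\overset{P_n}{\to}0$. For (2), fix $\delta>0$; tightness of $\{A_n\}$ provides $K$ with $\sup_n P_n(A_n>K)<\delta/2$, and then choosing $M>2K/\delta$ in $(\star)$ gives $\sup_n P_n(S_n\ge M)<\delta$, which is precisely tightness of $\{S_n\}$.

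The one step I expect to require care is the predictability bookkeeping around $\tau$: checking that $\{\tau\ge j\}\in\mathcal{F}_{n,j-1}$ so that the conditional expectation may be pulled inside the sum in the computation of $E_n[\sum_{j=1}^\tau X_{n,j}]$, and that the monotonicity of $(B_k)$—which is exactly where nonnegativity of $X_{n,j}$ enters—guarantees $B_\tau\le K$ pathwise. Everything beyond that is Markov's inequality and the choice of constants.
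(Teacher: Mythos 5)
Your proof is correct and is essentially the paper's own argument: your predictable events $\{\tau\ge j\}=\{B_j\le K\}$ are exactly the paper's sets $A_{n,j,\delta}$, and both proofs combine the tower property over these events with Markov's inequality on the good event and a crude bound on its complement. The only cosmetic difference is that you package the estimate as a single Lenglart-type inequality $(\star)$ and let $K\downarrow 0$ after the $\limsup$, where the paper instead bounds the truncated sum by $E_n[A_n\wedge\delta]$ and invokes bounded convergence.
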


\begin{proof}
1. For any $\delta>0$, let $A_{n,j,\delta}=\{\sum_{j'=1}^jE_n[X_{n,j'}|\mathcal{F}_{n,j'-1}]\leq \delta\}$.
Since $A_{n,j,\delta}\in\mathcal{F}_{n,j-1}$
and $A_{n,j,\delta}$ is monotonically decreasing on $j$, we have
\begin{eqnarray}
E_n\bigg[\sum_{j=1}^{N_n}X_{n,j}1_{A_{n,N_n,\delta}}\bigg]
&\leq &E_n\bigg[\sum_{j=1}^{N_n}X_{n,j}1_{A_{n,j,\delta}}\bigg]
\leq E_n\bigg[\sum_{j=1}^{N_n}E_n[X_{n,j}|\mathcal{F}_{n,j-1}]1_{A_{n,j,\delta}}\bigg]. \nonumber
\end{eqnarray}
If $\omega \in A_{n,1,\delta}$, we have
\begin{equation}\label{cond-exp-conv-lemma-eq1}
\sum_{j=1}^{N_n}E_n[X_{n,j}|\mathcal{F}_{n,j-1}]1_{A_{n,j,\delta}}=\sum_{j=1}^JE_n[X_{n,j}|\mathcal{F}_{n,j-1}]\leq \delta,
\end{equation}
where $J=\max\{1\leq j\leq N_n;\omega \in A_{n,j,\delta}\}$.
Hence we obtain
\begin{equation}
E_n\bigg[\sum_{j=1}^{N_n}X_{n,j}1_{A_{n,N_n,\delta}}\bigg]\leq E_n\bigg[\bigg(\sum_{j=1}^{N_n}E_n[X_{n,j}|\mathcal{F}_{n,j-1}]\bigg)\wedge \delta\bigg]\to 0.
\end{equation}
Therefore, it follows from Markov's inequality that
\EQNN{P_n\bigg(\sum_{j=1}^{N_n}X_{n,j}\geq \delta\bigg)&\leq P_n(A_{n,N_n,\delta}^c)+P_n\bigg(\sum_{j=1}^{N_n}X_{n,j}1_{A_{n,N_n,\delta}}\geq \delta\bigg) \\
&\leq P_n(A_{n,N_n,\delta}^c)+\delta^{-1}E_n\bigg[\sum_{j=1}^{N_n}X_{n,j}1_{A_{n,N_n,\delta}}\bigg] \to 0.
}

\noindent
2. For any $\epsilon>0$, there exists $M>0$ such that $P_n(A_{n,N_n,M}^c)<\epsilon$ by the assumptions.
Then, (\ref{cond-exp-conv-lemma-eq1}) and the monotonicity of $A_{n,j,M}$ on $j$ yield
\EQNN{P_n\bigg(\sum_{j=1}^{N_n}X_{n,j}\geq M'\bigg)
&\leq P_n\bigg(\sum_{j=1}^{N_n}X_{n,j}1_{A_{n,N_n,M}}\geq M'\bigg) +P_n(A_{n,N_n,M}^c) \\
&\leq \frac{1}{M'}E_n\bigg[\sum_{j=1}^{N_n}X_{n,j}1_{A_{n,j,M}}\bigg]+\epsilon \\
&=\frac{1}{M'}E_n\bigg[\sum_{j=1}^{N_n}E_n[X_{n,j}|\mcf_{n,j-1}]1_{A_{n,j,M}}\bigg]+\epsilon
\leq \frac{M}{M'}+\epsilon\leq 2\epsilon
}
for sufficiently large $M'$.
\end{proof}

\begin{lemma}\label{jeg82-lemma1}
Assume (A3) and that $(\mct_n)_{n\in\mbbn}$ is $P_{\alpha_0,n}$-tight. Then
\begin{equation*}
{\colorp \sum_{j=1}^{m_n}h^{\top} \epsilon_n^\top\eta_j\eta_j^{\top}\epsilon_nh-h^{\top}\mct_nh}\to 0
\end{equation*}
as $n\to \infty$ in $P_{\alpha_0,n}$-probability.
\end{lemma}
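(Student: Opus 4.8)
The plan is to reduce the assertion to a weak law of large numbers for the squares of the scalar martingale differences $\chi_{nj}:=h^\top\epsilon_n^\top\eta_j$. Since $\epsilon_n$ and $h$ are deterministic, the definition of $\mct_n$ in (\ref{TnWn-def}) gives $h^\top\mct_n h=\sum_{j=1}^{m_n}E_{\alpha_0}[\chi_{nj}^2\mid\mca_{j-1,n}]$, while $\sum_{j=1}^{m_n}h^\top\epsilon_n^\top\eta_j\eta_j^\top\epsilon_n h=\sum_{j=1}^{m_n}\chi_{nj}^2$ because $\eta_j^\top\epsilon_n h=\chi_{nj}$. Hence it suffices to prove that $S_n:=\sum_{j=1}^{m_n}\big(\chi_{nj}^2-E_{\alpha_0}[\chi_{nj}^2\mid\mca_{j-1,n}]\big)\to 0$ in $P_{\alpha_0,n}$-probability. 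By (A2) each conditional second moment is finite, so the summands $Y_{nj}:=\chi_{nj}^2-E_{\alpha_0}[\chi_{nj}^2\mid\mca_{j-1,n}]$ are well-defined, $\mca_{j,n}$-measurable martingale differences with respect to $(\mca_{j,n})_j$.

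First I would truncate at a level $\epsilon>0$, writing $\chi_{nj}^2=\chi_{nj}^2 1_{\{|\chi_{nj}|\le\epsilon\}}+\chi_{nj}^2 1_{\{|\chi_{nj}|>\epsilon\}}$. The tail part is handled directly: by (A3) the conditional sums $\sum_j E_{\alpha_0}[\chi_{nj}^2 1_{\{|\chi_{nj}|>\epsilon\}}\mid\mca_{j-1,n}]\to 0$ in probability, and since $\chi_{nj}^2 1_{\{|\chi_{nj}|>\epsilon\}}$ is nonnegative and $\mca_{j,n}$-measurable, Lemma~\ref{cond-exp-conv-lemma}(1) gives $\sum_j\chi_{nj}^2 1_{\{|\chi_{nj}|>\epsilon\}}\to 0$ as well. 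Subtracting, the contribution of the tail terms to $S_n$ vanishes in probability for each fixed $\epsilon$.

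For the bounded part, set $D_{nj}:=\chi_{nj}^2 1_{\{|\chi_{nj}|\le\epsilon\}}-E_{\alpha_0}[\chi_{nj}^2 1_{\{|\chi_{nj}|\le\epsilon\}}\mid\mca_{j-1,n}]$, again a martingale difference, with $\chi_{nj}^2 1_{\{|\chi_{nj}|\le\epsilon\}}\le\epsilon^2$. A naive $L^2$ estimate bounds $E_{\alpha_0}[(\sum_j D_{nj})^2]$ by $\epsilon^2 E_{\alpha_0}[h^\top\mct_n h]$. The main obstacle is exactly here: $\mct_n$ is only assumed tight, not bounded in $L^1$, so $E_{\alpha_0}[h^\top\mct_n h]$ may be infinite. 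I resolve this as in the proof of Lemma~\ref{cond-exp-conv-lemma}, by localizing with the $\mca_{j-1,n}$-measurable decreasing events $A_{n,j,M}:=\{\sum_{j'=1}^{j}E_{\alpha_0}[\chi_{nj'}^2\mid\mca_{j'-1,n}]\le M\}$. The stopped sum $\sum_j D_{nj}1_{A_{n,j,M}}$ is still a sum of martingale differences, so its second moment equals $\sum_j E_{\alpha_0}[D_{nj}^2 1_{A_{n,j,M}}]$; using $E_{\alpha_0}[D_{nj}^2\mid\mca_{j-1,n}]\le E_{\alpha_0}[\chi_{nj}^4 1_{\{|\chi_{nj}|\le\epsilon\}}\mid\mca_{j-1,n}]\le\epsilon^2 E_{\alpha_0}[\chi_{nj}^2\mid\mca_{j-1,n}]$ together with the pointwise bound $\sum_j E_{\alpha_0}[\chi_{nj}^2\mid\mca_{j-1,n}]1_{A_{n,j,M}}\le M$ (the monotonicity argument of Lemma~\ref{cond-exp-conv-lemma}), this second moment is at most $\epsilon^2 M$.

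Finally I would assemble the estimates. On $A_{n,m_n,M}$ all indicators equal $1$, so there the localized sum coincides with $\sum_j D_{nj}$; hence Chebyshev's inequality gives $P_{\alpha_0,n}(|\sum_j D_{nj}|\ge\delta)\le P_{\alpha_0,n}(A_{n,m_n,M}^c)+\epsilon^2 M/\delta^2$. Given a target tolerance and a fixed $\delta>0$, tightness of $\sum_j E_{\alpha_0}[\chi_{nj}^2\mid\mca_{j-1,n}]=h^\top\mct_n h$ lets me first choose $M$ so that $\sup_n P_{\alpha_0,n}(A_{n,m_n,M}^c)$ is small, and then choose the truncation level $\epsilon$ small enough that $\epsilon^2 M/\delta^2$ is small; with this $\epsilon$ fixed, the tail part from the second step vanishes in the limit. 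Combining the bounded and tail contributions yields $S_n\to 0$ in $P_{\alpha_0,n}$-probability, which is the claimed convergence.
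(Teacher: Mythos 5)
Your proposal is correct and follows essentially the same route as the paper: reduce to a weak law for the martingale differences $\chi_{nj}^2-E_{\alpha_0}[\chi_{nj}^2\mid\mca_{j-1,n}]$, kill the tail $\{|\chi_{nj}|>\epsilon\}$ via (A3) together with Lemma~\ref{cond-exp-conv-lemma}, and control the truncated part by a second-moment bound localized on the $\mca_{j-1,n}$-measurable events where $h^\top\mct_n h\le M$, invoking tightness to choose $M$. The only difference is that you carry out explicitly the stopped-martingale estimate that the paper delegates to (A.1) of Fukasawa and Ogihara, so your write-up is a self-contained version of the same argument.
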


\begin{proof}

For any $\epsilon>0$, (A3) and {\colorp Chebyshev's} inequality imply that $\sum_{j=1}^{m_n}P_{\alpha_0,n}(|h^\top \epsilon_n^\top\eta_{j}|>\epsilon|\mca_{j-1,n}) \to 0$ in $P_{\alpha_0,n}$-probability.
Then, together with Lemma~\ref{cond-exp-conv-lemma}, we have
\begin{equation}\label{max-eta-est}
1_{\{\max_{1\leq j\leq m_n}|h^\top \epsilon_n^\top\eta_{j}|>\epsilon\}}\leq \sum_{j=1}^{m_n}1_{\{|h^\top \epsilon_n^\top\eta_{j}|>\epsilon\}}\to 0
\end{equation}
in $P_{\alpha_0,n}$-probability. Therefore, we obtain $P_{\alpha_0,n}(\max_{1\leq j\leq m_n}|h^\top \epsilon_n^\top\eta_{j}|>\epsilon)\to 0$.

Then, we have the conclusion similarly to (A.1) in~\cite{fuk-ogi20} by using (A3), Lemma~\ref{cond-exp-conv-lemma}, and the $P_{\alpha_0,n}$-tightness of
$\{\sum_{j=1}^{m_n}E_{\alpha_0}[|h^\top \epsilon_n^\top\eta_{j}|^2|\mca_{j-1,n}]\}_n$ shown by the assumptions.

\end{proof}

Let $G_j=1_{\{p_j(\alpha_0)=0\}}$, and let $\dot{\eta}_{nj}(\alpha_0,h)= (1-G_j)(p_j(\alpha_h)^{1/2}p_j(\alpha_0)^{-1/2}-1)$.
\begin{lemma}\label{jeg82-lemma3}
Under (A1),
\begin{equation*}
\sum_{j=1}^{m_n}\int G_jp_j(\alpha_h)d\mu_{n,j}\to 0 \quad {\rm and}
\quad \sum_{j=1}^{m_n}\int G_j|h^\top \epsilon_n^\top\dot{\xi}_{nj}|^2d\mu_{n,j}\to 0
\end{equation*}
{\colorp as $n\to\infty$ in $P_{\alpha_0,n}$-probability} for any $h\in \mathbb{R}^d$.
\end{lemma}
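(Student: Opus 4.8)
The plan is to obtain both assertions from the $L^2$ regularity condition \eqref{L2-regu-eq} alone, the decisive device being to apply (A1) simultaneously along the two opposite directions $h$ and $-h$. Since each $\sqrt{p_j(\alpha_h)}$ and $\sqrt{p_j(\alpha_{-h})}$ is nonnegative while the linear term $\tfrac12 h^\top\epsilon_n^\top\dot{\xi}_{nj}(\alpha_0)$ is odd in $h$, adding the two contributions cancels the linear part and isolates the mass carried on the set where $p_j(\alpha_0)$ vanishes.

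First I would localize (A1) to the singular set $G_j=1$. As $G_j\in\{0,1\}$ and the integrand in \eqref{L2-regu-eq} is nonnegative, multiplying by $G_j$ only decreases the sum, so for every $h\in\mbbr^d$,
\[
\sum_{j=1}^{m_n}\int G_j\Big(\xi_{nj}(\alpha_0,h)-\tfrac12 h^\top\epsilon_n^\top\dot{\xi}_{nj}(\alpha_0)\Big)^2 d\mu_{n,j}\to 0
\]
in $P_{\alpha_0,n}$-probability. On $\{p_j(\alpha_0)=0\}$ we have $\sqrt{p_j(\alpha_0)}=0$, whence $\xi_{nj}(\alpha_0,h)=\sqrt{p_j(\alpha_h)}$. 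Abbreviating $u_j=\sqrt{p_j(\alpha_h)}\ge 0$, $v_j=\sqrt{p_j(\alpha_{-h})}\ge 0$, and $w_j=\tfrac12 h^\top\epsilon_n^\top\dot{\xi}_{nj}(\alpha_0)$, the displayed limit for the directions $h$ and $-h$ reads
\[
\sum_{j=1}^{m_n}\int G_j(u_j-w_j)^2 d\mu_{n,j}\to 0,\qquad \sum_{j=1}^{m_n}\int G_j(v_j+w_j)^2 d\mu_{n,j}\to 0.
\]

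The main step, and the only genuinely delicate point, is the first assertion. Adding the two limits above and applying the elementary inequality $a^2+b^2\ge\tfrac12(a+b)^2$ with $a=u_j-w_j$ and $b=v_j+w_j$ gives the pointwise bound $(u_j-w_j)^2+(v_j+w_j)^2\ge\tfrac12(u_j+v_j)^2\ge\tfrac12 u_j^2=\tfrac12 p_j(\alpha_h)$ on $G_j=1$, in which the odd term $w_j$ has cancelled in $a+b=u_j+v_j$. Integrating against $\mu_{n,j}$ and summing, the nonnegative quantity $\tfrac12\sum_j\int G_j p_j(\alpha_h)d\mu_{n,j}$ is dominated by a sum tending to $0$ in $P_{\alpha_0,n}$-probability, which proves $\sum_j\int G_j p_j(\alpha_h)d\mu_{n,j}\to 0$. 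It is precisely the availability of (A1) in both directions that makes this work under (A1) alone: the even, nonnegative part $u_j^2$ survives while the odd linear part is eliminated.

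The second assertion then follows by the triangle inequality in $\bigoplus_j L^2(G_j\,d\mu_{n,j})$, namely
\[
\Big(\sum_{j=1}^{m_n}\int G_j w_j^2 d\mu_{n,j}\Big)^{1/2}\le \Big(\sum_{j=1}^{m_n}\int G_j u_j^2 d\mu_{n,j}\Big)^{1/2}+\Big(\sum_{j=1}^{m_n}\int G_j(u_j-w_j)^2 d\mu_{n,j}\Big)^{1/2},
\]
whose right-hand side tends to $0$ in probability by the first assertion (recall $u_j^2=p_j(\alpha_h)$) and by the restricted (A1). Since $|h^\top\epsilon_n^\top\dot{\xi}_{nj}(\alpha_0)|^2=4w_j^2$, this yields $\sum_j\int G_j|h^\top\epsilon_n^\top\dot{\xi}_{nj}(\alpha_0)|^2 d\mu_{n,j}\to 0$. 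Finally, all evaluations at $\alpha_{\pm h}=\alpha_0\pm\epsilon_n h$ are legitimate for large $n$ because $\Theta$ is open and $\lVert\epsilon_n\rVert_{{\rm op}}\to 0$, so $\alpha_{\pm h}\in\Theta$ eventually.
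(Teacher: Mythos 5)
Your argument is correct, and it is essentially the proof the paper points to: the paper's own ``proof'' is a one-line deferral to Lemma~3 of Jeganathan (1982), whose standard device is exactly your $\pm h$ trick --- on $\{p_j(\alpha_0)=0\}$ one has $\xi_{nj}(\alpha_0,\pm h)\ge 0$, so summing the two instances of (A1) cancels the odd linear term and bounds $\sum_j\int G_j p_j(\alpha_h)\,d\mu_{n,j}$, after which the second claim follows by the $L^2$ triangle inequality. You have simply written out the details that the paper omits.
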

\begin{proof}
We can show the results in the same manner as Lemma 3 in~\cite{jeg82}.
\end{proof}

\begin{lemma}\label{jeg82-lemma4}
Under (A1) and (A4),
\EQ{\label{xi-conv1} {\colorp \sum_{j=1}^{m_n}\int \bigg|\xi_{nj}^2(\alpha_0,h)-\frac{1}{4}|h^\top \epsilon_n^\top\dot{\xi}_{nj}|^2\bigg|d\mu_{n,j} \to  0}}
and
\EQ{\label{xi-conv2} {\colorp \sum_{j=1}^{m_n}E_{\alpha_0}\bigg[\bigg|\dot{\eta}_{nj}^2(\alpha_0,h)-\frac{1}{4}|h^\top \epsilon_n^\top{\colorp \eta_j}|^2\bigg|\bigg|\mca_{j-1,n}\bigg] \to  0}}
{\colorp as $n\to \infty$ in $P_{\alpha_0,n}$-probability for any $h\in\mbbr^d$.}
\end{lemma}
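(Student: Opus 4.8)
The plan is to establish \eqref{xi-conv1} by the elementary factorization $a^2-b^2=(a-b)(a+b)$, and then to read off \eqref{xi-conv2} from it with almost no extra work. Abbreviate $u_j=\xi_{nj}(\alpha_0,h)$ and $v_j=\tfrac12 h^\top\epsilon_n^\top\dot\xi_{nj}(\alpha_0)$, so that (A1), i.e.\ \eqref{L2-regu-eq}, reads $\sum_{j}\int(u_j-v_j)^2 d\mu_{n,j}\to 0$ in $P_{\alpha_0,n}$-probability. Using $|u_j^2-v_j^2|\le|u_j-v_j|^2+2|u_j-v_j|\,|v_j|$ together with the Cauchy--Schwarz inequality applied over the disjoint union of the measures $\mu_{n,j}$ (pathwise in $\omega$ through $\bar X_{j-1}$), I bound
\[
\sum_j\int|u_j^2-v_j^2|\,d\mu_{n,j}
\le \sum_j\int(u_j-v_j)^2 d\mu_{n,j}
+2\Big(\sum_j\int(u_j-v_j)^2 d\mu_{n,j}\Big)^{1/2}\Big(\sum_j\int v_j^2\,d\mu_{n,j}\Big)^{1/2}.
\]
By (A1) the factor $\sum_j\int(u_j-v_j)^2 d\mu_{n,j}$ tends to $0$ in probability, so \eqref{xi-conv1} will follow once I show that $\sum_j\int v_j^2\,d\mu_{n,j}=\tfrac14\sum_j\int|h^\top\epsilon_n^\top\dot\xi_{nj}|^2 d\mu_{n,j}$ is $P_{\alpha_0,n}$-tight, since the product of an $o_P(1)$ term and a tight term is $o_P(1)$.

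This tightness is the main point. I split the integral according to $G_j=1_{\{p_j(\alpha_0)=0\}}$. On $\{p_j(\alpha_0)=0\}$, Lemma~\ref{jeg82-lemma3} gives $\sum_j\int G_j|h^\top\epsilon_n^\top\dot\xi_{nj}|^2 d\mu_{n,j}\to 0$. On $\{p_j(\alpha_0)\neq0\}$ the definition of $\eta_j$ yields $\dot\xi_{nj}=\eta_j\sqrt{p_j(\alpha_0)}$, whence $(1-G_j)|h^\top\epsilon_n^\top\dot\xi_{nj}|^2=|h^\top\epsilon_n^\top\eta_j|^2 p_j(\alpha_0)$; integrating against $\mu_{n,j}$ and invoking \eqref{condE-eq} converts this into $E_{\alpha_0}[|h^\top\epsilon_n^\top\eta_j|^2\,|\,\mca_{j-1,n}]$, and summing over $j$ gives exactly $h^\top\mct_n h$ by \eqref{TnWn-def}. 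Thus $\sum_j\int|h^\top\epsilon_n^\top\dot\xi_{nj}|^2 d\mu_{n,j}=h^\top\mct_n h+o_P(1)$. Finally, (A4) forces $\mct_n$ to converge in law to the finite random matrix $\mct$, so $h^\top\mct_n h$ converges in distribution and is therefore tight; this yields the desired tightness and hence \eqref{xi-conv1}.

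It remains to deduce \eqref{xi-conv2}. On $\{p_j(\alpha_0)\neq0\}$ one has $\dot\eta_{nj}(\alpha_0,h)=u_j/\sqrt{p_j(\alpha_0)}$ and $h^\top\epsilon_n^\top\eta_j=h^\top\epsilon_n^\top\dot\xi_{nj}/\sqrt{p_j(\alpha_0)}$, so that $\dot\eta_{nj}^2(\alpha_0,h)-\tfrac14|h^\top\epsilon_n^\top\eta_j|^2=(u_j^2-v_j^2)/p_j(\alpha_0)$, while on $\{p_j(\alpha_0)=0\}$ both $\dot\eta_{nj}(\alpha_0,h)$ and $\eta_j$ vanish. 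Hence, by \eqref{condE-eq},
\[
E_{\alpha_0}\Big[\big|\dot\eta_{nj}^2(\alpha_0,h)-\tfrac14|h^\top\epsilon_n^\top\eta_j|^2\big|\,\Big|\,\mca_{j-1,n}\Big]
=\int(1-G_j)\,|u_j^2-v_j^2|\,d\mu_{n,j}
\le\int|u_j^2-v_j^2|\,d\mu_{n,j}.
\]
Summing over $j$ and comparing with \eqref{xi-conv1} gives \eqref{xi-conv2}. The only genuinely delicate step is the identification of $\sum_j\int|h^\top\epsilon_n^\top\dot\xi_{nj}|^2 d\mu_{n,j}$ with $h^\top\mct_n h$ (modulo the negligible $G_j$-part), after which tightness is immediate from (A4); everything else is the standard $a^2-b^2$ manipulation.
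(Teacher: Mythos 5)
Your proof of \eqref{xi-conv1} is correct and follows essentially the same route as the paper: the only cosmetic difference is that you control the cross term $2\sum_j\int|u_j-v_j|\,|v_j|\,d\mu_{n,j}$ by Cauchy--Schwarz, whereas the paper uses the parametrized Young inequality $2ab\le\beta a^2+\beta^{-1}b^2$ and lets $\beta\to\infty$ at the end; both reduce the problem to the same two inputs, namely (A1) and the tightness of $\sum_j\int|h^\top\epsilon_n^\top\dot{\xi}_{nj}|^2d\mu_{n,j}$, which you establish exactly as the paper does via the identity \eqref{xi-eta-eq}, Lemma~\ref{jeg82-lemma3}, and (A4). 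Where you genuinely diverge is \eqref{xi-conv2}: the paper reruns the whole Young-inequality argument at the level of conditional expectations, invoking (A1) again through the intermediate bound $\sum_jE_{\alpha_0}[|\dot{\eta}_{nj}-\tfrac12h^\top\epsilon_n^\top\eta_j|^2|\mca_{j-1,n}]\le\sum_j\int(u_j-v_j)^2d\mu_{n,j}$, while you observe the exact pointwise identity $\dot{\eta}_{nj}^2-\tfrac14|h^\top\epsilon_n^\top\eta_j|^2=(u_j^2-v_j^2)/p_j(\alpha_0)$ on $\{p_j(\alpha_0)\neq0\}$, so that by \eqref{condE-eq} the conditional expectation in \eqref{xi-conv2} equals $\int(1-G_j)|u_j^2-v_j^2|\,d\mu_{n,j}$ and \eqref{xi-conv2} is literally dominated by \eqref{xi-conv1}. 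This is a legitimate and cleaner deduction (the measurability and nonnegativity needed to apply \eqref{condE-eq} to the unbounded integrand are harmless by monotone convergence); what the paper's longer route buys is that the intermediate estimate it establishes along the way is reused later (it is exactly (\ref{eta-diff-conv}), which feeds into Lemmas~\ref{jeg82-lemma6} and~\ref{jeg82-lemma7}), so your shortcut does not replace that computation elsewhere in the argument.
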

\begin{proof}
Similarly to (2.12) in~\cite{jeg82}, we have
\EQNN{&\sum_{j=1}^{m_n}\int \bigg|\xi_{nj}^2(\alpha_0,h)-\frac{1}{4}|h^\top \epsilon_n^\top\dot{\xi}_{nj}|^2\bigg|d\mu_{n,j} \\
&\quad \leq (1+\beta)\sum_{j=1}^{m_n}\int \bigg|\xi_{nj}(\alpha_0,h)-\frac{1}{2}h^\top \epsilon_n^\top\dot{\xi}_{nj}\bigg|^2d\mu_{n,j}
+\frac{1}{4\beta}\sum_{j=1}^{m_n}\int |h^\top \epsilon_n^\top\dot{\xi}_{nj}|^2d\mu_{n,j}
}
for any $\beta>0$.

Moreover, we have
\begin{equation}\label{xi-eta-eq}
\sum_{j=1}^{m_n}\int|h^\top \epsilon_n^\top{\colorp \dot{\xi}_{nj}}|^2d\mu_{n,j}
=\sum_{j=1}^{m_n}E_{\alpha_0}[|h^\top \epsilon_n^\top{\colorp \eta_j|^2|\mca_{j-1,n}]} +\sum_{j=1}^{m_n}\int G_j|h^\top \epsilon_n^\top\dot{\xi}_{nj}|^2d\mu_{n,j}.
\end{equation}
Together with (A1), the tightness of $\{\sum_{j=1}^{m_n}E_{\alpha_0}[|h^\top \epsilon_n^\top\eta_j|^2|{\colorp \mca_{j-1,n}}]\}_n$ by (A4), and Lemma~\ref{jeg82-lemma3},
we have (\ref{xi-conv1}).
Next, we show (\ref{xi-conv2}). For any $\beta>0$, (2.9) in~\cite{jeg82} yields
\begin{eqnarray}
&&\sum_{j=1}^{m_n}E_{\alpha_0}\bigg[\bigg|\dot{\eta}_{nj}^2(\alpha_0,h)-\frac{1}{4}|h^\top \epsilon_n^\top\eta_j|^2\bigg|\bigg|{\colorp \mca_{j-1,n}}\bigg] \nonumber \\
&&\quad \leq (1+\beta)\sum_{j=1}^{m_n}E_{\alpha_0}\bigg[\bigg|\dot{\eta}_{nj}(\alpha_0,h)-\frac{1}{2}h^\top \epsilon_n^\top\eta_j\bigg|^2\bigg|{\colorp \mca_{j-1,n}}\bigg]
+\frac{1}{4\beta}\sum_{j=1}^{m_n}E_{\alpha_0}[|h^\top \epsilon_n^\top\eta_j|^2|{\colorp \mca_{j-1,n}}]. \nonumber
\end{eqnarray}
We also have
\begin{equation}\label{eta-diff-conv}
\sum_{j=1}^{m_n}E_{\alpha_0}\bigg[\bigg|\dot{\eta}_{nj}(\alpha_0,h)-\frac{1}{2}h^\top \epsilon_n^\top\eta_j\bigg|^2\bigg|{\colorp \mca_{j-1,n}}\bigg]
\leq \sum_{j=1}^{m_n}\int [\xi_{nj}(\alpha_0,h)-\frac{1}{2}h^{\top}\epsilon_n^\top\dot{\xi}_{nj}(\alpha_0)]^2d\mu_{n,j}\to 0
\end{equation}
in $P_{\alpha_0,n}$-probability by (A1).

Then, by letting $\beta\to \infty$,
the tightness of $\{\sum_{j=1}^{m_n}E_{\alpha_0}[|h^\top \epsilon_n^\top\eta_j|^2|{\colorp \mca_{j-1,n}}]\}_n$ yields (\ref{xi-conv2}).

\end{proof}

\begin{lemma}\label{jeg82-lemma5}
Under (A1)--(A4),
\begin{equation*}
\bigg|\sum_{j=1}^{m_n}\dot{\eta}_{nj}^2(\alpha_0,h)-\frac{1}{4}h^\top \mct_nh\bigg|\to 0
\end{equation*}
as $n\to\infty$ in $P_{\alpha_0,n}$-probability.
\end{lemma}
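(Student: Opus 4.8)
The plan is to show that the random sum $\sum_{j=1}^{m_n}\dot{\eta}_{nj}^2(\alpha_0,h)$ is asymptotically equal to $\tfrac14 h^\top\mct_n h$ under $P_{\alpha_0,n}$. The natural route is a two-step comparison: first replace each squared increment $\dot{\eta}_{nj}^2$ by its conditional expectation $E_{\alpha_0}[\dot{\eta}_{nj}^2|\mca_{j-1,n}]$, and then replace that conditional expectation by $\tfrac14 E_{\alpha_0}[|h^\top\epsilon_n^\top\eta_j|^2|\mca_{j-1,n}]$, whose sum is precisely $\tfrac14 h^\top\mct_n h$ by the definition \eqref{TnWn-def} of $\mct_n$. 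The second replacement is handled immediately by Lemma~\ref{jeg82-lemma4}, specifically \eqref{xi-conv2}, after observing that $\sum_j E_{\alpha_0}[\dot{\eta}_{nj}^2|\mca_{j-1,n}]$ and $\tfrac14\sum_j E_{\alpha_0}[|h^\top\epsilon_n^\top\eta_j|^2|\mca_{j-1,n}]$ differ by the sum in \eqref{xi-conv2}, which vanishes in probability.

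\textbf{The main step.} The crux is therefore the first replacement, namely showing
\begin{equation*}
\sum_{j=1}^{m_n}\bigl(\dot{\eta}_{nj}^2(\alpha_0,h)-E_{\alpha_0}[\dot{\eta}_{nj}^2(\alpha_0,h)\mid\mca_{j-1,n}]\bigr)\to 0
\end{equation*}
in $P_{\alpha_0,n}$-probability. This is a statement about a sum of martingale differences (each summand is $\mca_{j,n}$-measurable with vanishing $\mca_{j-1,n}$-conditional expectation), so I would control it by a conditional-variance / truncation argument in the style of Jeganathan's Lemma~5. The idea is to split $\dot{\eta}_{nj}^2$ at a truncation level governed by $\max_j|h^\top\epsilon_n^\top\eta_j|$, which we already know is asymptotically negligible from the estimate \eqref{max-eta-est} established inside the proof of Lemma~\ref{jeg82-lemma1}. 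On the event where all increments are small, the squared terms are uniformly small, and the conditional Lindeberg-type condition (A3) together with Lemma~\ref{cond-exp-conv-lemma} forces the sum of conditional second moments of the truncated differences to zero; on the complementary event, which has vanishing probability by the maximal estimate, the contribution is negligible.

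\textbf{Assembling the pieces.} Once both replacements are justified, the triangle inequality gives
\begin{equation*}
\Bigl|\sum_{j=1}^{m_n}\dot{\eta}_{nj}^2(\alpha_0,h)-\tfrac14 h^\top\mct_n h\Bigr|
\leq \Bigl|\sum_{j=1}^{m_n}\bigl(\dot{\eta}_{nj}^2-E_{\alpha_0}[\dot{\eta}_{nj}^2\mid\mca_{j-1,n}]\bigr)\Bigr|
+\sum_{j=1}^{m_n}E_{\alpha_0}\bigl[\bigl|\dot{\eta}_{nj}^2-\tfrac14|h^\top\epsilon_n^\top\eta_j|^2\bigr|\bigm|\mca_{j-1,n}\bigr],
\end{equation*}
where the first term vanishes by the martingale-difference step and the second by \eqref{xi-conv2}. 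I should also note that the $P_{\alpha_0,n}$-tightness of $\{h^\top\mct_n h\}_n$, needed for the truncation bookkeeping, follows from (A4) (which gives convergence in law of $\mct_n$, hence tightness) combined with Lemma~\ref{cond-exp-conv-lemma}; this is exactly the tightness already invoked in Lemma~\ref{jeg82-lemma4}.

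\textbf{Anticipated obstacle.} The delicate point is the martingale-difference step: unlike in Jeganathan~\cite{jeg82}, where the analogous control is phrased through ordinary expectations, here everything must be carried out at the level of $\mca_{j-1,n}$-conditional expectations, so the passage from the conditional Lindeberg condition (A3) to convergence of the unconditional sum must go through Lemma~\ref{cond-exp-conv-lemma}(1) rather than a direct moment bound. The bookkeeping of the truncation—ensuring the cross terms and the tail (large-increment) part are both controlled by the single maximal estimate \eqref{max-eta-est}—is where the argument requires care, even though each individual estimate is routine.
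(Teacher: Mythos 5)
Your decomposition is correct but it is genuinely different from the paper's, and it makes the lemma harder than it needs to be. The paper's proof is a three-line combination of already-established results: apply Lemma~\ref{cond-exp-conv-lemma}(1) to the \emph{nonnegative} summands $X_{n,j}=\bigl|\dot{\eta}_{nj}^2(\alpha_0,h)-\tfrac14|h^\top\epsilon_n^\top\eta_j|^2\bigr|$, whose conditional expectations sum to $o_p(1)$ by \eqref{xi-conv2}, to conclude directly that $\sum_j\bigl|\dot{\eta}_{nj}^2-\tfrac14|h^\top\epsilon_n^\top\eta_j|^2\bigr|\to 0$ in $P_{\alpha_0,n}$-probability --- no martingale-difference or truncation argument is needed for this step, precisely because the summands are taken in absolute value; then Lemma~\ref{jeg82-lemma1} replaces $\sum_j|h^\top\epsilon_n^\top\eta_j|^2$ by $h^\top\mct_n h$. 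In other words, the paper localizes all of the ``decondition the squares'' work in Lemma~\ref{jeg82-lemma1} (where it is done once, for the quadratic forms in $\eta_j$), whereas you propose to redo it for $\dot{\eta}_{nj}^2$ itself. Your route can be made to work, but two points need care. First, the truncation must be governed by $\max_j|\dot{\eta}_{nj}(\alpha_0,h)|$, not by $\max_j|h^\top\epsilon_n^\top\eta_j|$ as you write: \eqref{max-eta-est} controls the latter, and transferring it to the former requires the conditional Lindeberg property for $\dot{\eta}_{nj}$ obtained from (A3) and \eqref{eta-diff-conv} --- this is exactly the first assertion of Lemma~\ref{jeg82-lemma6}, which fortunately does not depend on the present lemma, so there is no circularity, but you should cite that argument rather than \eqref{max-eta-est}. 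Second, your martingale step still needs a conditional-variance bound (e.g.\ Lemma~9 of Genon-Catalot and Jacod, as in Lemma~\ref{jeg82-lemma7}) together with the tightness of $\sum_jE_{\alpha_0}[\dot{\eta}_{nj}^2|\mca_{j-1,n}]$, which you correctly trace back to (A4). The net effect is that your proof is valid but imports machinery (truncation, martingale LLN, the maximal estimate for $\dot{\eta}_{nj}$) that the paper's one-line argument avoids entirely; the simplification you are missing is that Lemma~\ref{cond-exp-conv-lemma}(1) applied to absolute differences kills the fluctuation and the centering simultaneously.
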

\begin{proof}
Lemmas~\ref{jeg82-lemma1},~\ref{cond-exp-conv-lemma}, and~\ref{jeg82-lemma4} yield the desired result.
\end{proof}

\begin{lemma}\label{jeg82-lemma6}
Under (A1)--(A4),
\begin{equation*}
\max_{1\leq j\leq m_n}|\dot{\eta}_{nj}(\alpha_0,h)|\to 0 \quad {\rm and} \quad \sum_{j=1}^{m_n}|\dot{\eta}_{nj}(\alpha_0,h)|^3\to 0
\end{equation*}
in $P_{\alpha_0,n}$-probability.
\end{lemma}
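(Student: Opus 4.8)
The plan is to treat $\dot\eta_{nj}(\alpha_0,h)$ as a small perturbation of its linearization $\tfrac12 h^\top\epsilon_n^\top\eta_j$ and to exploit the conditional $L^2$ control of the remainder that is already isolated in the proof of Lemma~\ref{jeg82-lemma4}. Setting $R_j=\dot\eta_{nj}(\alpha_0,h)-\tfrac12 h^\top\epsilon_n^\top\eta_j$, the estimate (\ref{eta-diff-conv}) obtained under (A1) reads
\[
\sum_{j=1}^{m_n}E_{\alpha_0}\big[|R_j|^2\,\big|\,\mca_{j-1,n}\big]\to 0
\]
in $P_{\alpha_0,n}$-probability. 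Since $|R_j|^2$ is nonnegative and $\mca_{j,n}$-measurable, Lemma~\ref{cond-exp-conv-lemma}(1) upgrades this to the unconditional statement $\sum_{j=1}^{m_n}|R_j|^2\to 0$ in $P_{\alpha_0,n}$-probability, and in particular $\max_{1\le j\le m_n}|R_j|\le\big(\sum_{j=1}^{m_n}|R_j|^2\big)^{1/2}\to 0$.

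For the first assertion I would then combine this with the uniform smallness of the linear part. The intermediate estimate established in the proof of Lemma~\ref{jeg82-lemma1}, namely $P_{\alpha_0,n}(\max_{1\le j\le m_n}|h^\top\epsilon_n^\top\eta_j|>\epsilon)\to0$ (which follows from (A3) via Lemma~\ref{cond-exp-conv-lemma}), gives $\max_{1\le j\le m_n}|h^\top\epsilon_n^\top\eta_j|\to0$ in $P_{\alpha_0,n}$-probability. Hence
\[
\max_{1\le j\le m_n}|\dot\eta_{nj}(\alpha_0,h)|\le\max_{1\le j\le m_n}|R_j|+\tfrac12\max_{1\le j\le m_n}|h^\top\epsilon_n^\top\eta_j|\to0,
\]
which is the first conclusion.

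For the cubic sum I would use the elementary bound $\sum_{j=1}^{m_n}|\dot\eta_{nj}(\alpha_0,h)|^3\le\big(\max_{1\le j\le m_n}|\dot\eta_{nj}(\alpha_0,h)|\big)\sum_{j=1}^{m_n}|\dot\eta_{nj}(\alpha_0,h)|^2$. The first factor tends to $0$ by the step just carried out. The second factor is $P_{\alpha_0,n}$-tight: Lemma~\ref{jeg82-lemma5} shows $\sum_{j=1}^{m_n}\dot\eta_{nj}^2(\alpha_0,h)-\tfrac14 h^\top\mct_n h\to0$ in probability, and $h^\top\mct_n h$ is tight because $\mct_n$ is tight under (A4). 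A product of a sequence converging to $0$ in probability and a tight sequence converges to $0$ in probability, which yields the second conclusion.

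The argument is essentially bookkeeping of already-proved facts, so there is no single deep obstacle; the point requiring care is the transfer from the conditional $L^2$ bound on the remainder to the unconditional $\max$ and sum statements. This is precisely where Lemma~\ref{cond-exp-conv-lemma}(1)—the conditional-to-unconditional device that replaces the expectation-based arguments of~\cite{jeg82}—must be invoked, after checking that $|R_j|^2$ satisfies its nonnegativity and measurability hypotheses, together with the tightness of $h^\top\mct_n h$ supplied by (A4).
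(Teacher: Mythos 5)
Your proposal is correct and follows essentially the same route as the paper: the same decomposition of $\dot{\eta}_{nj}(\alpha_0,h)$ into the linear part $\tfrac12 h^\top\epsilon_n^\top\eta_j$ plus a remainder controlled by (\ref{eta-diff-conv}), the same conditional-to-unconditional transfer via Lemma~\ref{cond-exp-conv-lemma}, and an identical treatment of the cubic sum via $\sum_j|\dot{\eta}_{nj}|^3\leq\max_j|\dot{\eta}_{nj}|\cdot\sum_j|\dot{\eta}_{nj}|^2$ together with Lemma~\ref{jeg82-lemma5} and the tightness of $\mct_n$. The only (immaterial) difference is that you apply Lemma~\ref{cond-exp-conv-lemma} to $\sum_j|R_j|^2$ and handle the linear part through the max estimate from Lemma~\ref{jeg82-lemma1}, whereas the paper applies it to the indicator sums $\sum_j 1_{\{|\dot{\eta}_{nj}|>\epsilon\}}$ after bounding the conditional probabilities.
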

\begin{proof}
We can show $\sum_{j=1}^{m_n}P_{\alpha_0,n}(|\dot{\eta}_{nj}(\alpha_0,h)|>\epsilon|{\colorp \mca_{j-1,n}})\to 0$ in $P_{\alpha_0,n}$-probability {\colorp for any $\epsilon>0$}
by (A3) and (\ref{eta-diff-conv}) similarly to Lemma 6 in~\cite{jeg82}.
Together with Lemma~\ref{cond-exp-conv-lemma}, we have
\begin{equation*}
1_{\{\max_{1\leq j\leq m_n}|\dot{\eta}_{nj}(\alpha_0,h)|>\epsilon\}}\leq \sum_{j=1}^{m_n}1_{\{|\dot{\eta}_{nj}(\alpha_0,h)|>\epsilon\}}\to 0
\end{equation*}
in $P_{\alpha_0,n}$-probability, which implies the first convergence.

The second convergence follows by
Lemma~\ref{jeg82-lemma5}, the first convergence, the {\colorp $P_{\alpha_0,n}$-tightness} of $\{\mct_n\}_{n=1}^{\infty}$, and the {\colorp inequality
$\sum_{j=1}^{m_n}|\dot{\eta}_{nj}(\alpha_0,h)|^3\leq\max_{1\leq j\leq m_n}|\dot{\eta}_{nj}(\alpha_0,h)|\times \sum_{j=1}^{m_n}|\dot{\eta}_{nj}(\alpha_0,h)|^2$}.

\end{proof}

\begin{lemma}\label{jeg82-lemma7}
Under (A1)--(A4),
\begin{equation*}
\bigg|2\sum_{j=1}^{m_n}\dot{\eta}_{nj}(\alpha_0,h)-h^\top \epsilon_n^\top\sum_{j=1}^{m_n}\eta_j+\frac{1}{4}h^\top \mct_nh\bigg|\to 0
\end{equation*}
in $P_{\alpha_0,n}$-probability.
\end{lemma}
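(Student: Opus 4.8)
The plan is to carry out the Le Cam decomposition of $\sum_j\dot\eta_{nj}(\alpha_0,h)$ into a conditionally centered martingale sum plus a predictable drift, and to match the drift with $-\frac14 h^\top\mct_n h$. Writing $b_{nj}=\frac12 h^\top\epsilon_n^\top\eta_j$ and noting $h^\top\epsilon_n^\top\sum_j\eta_j=2\sum_j b_{nj}$, the target quantity is $2\sum_j(\dot\eta_{nj}(\alpha_0,h)-b_{nj})+\frac14 h^\top\mct_n h$. The computational core is the conditional Hellinger identity
\[
E_{\alpha_0}[\dot\eta_{nj}(\alpha_0,h)\mid\mca_{j-1,n}]=-\tfrac12\int\xi_{nj}^2(\alpha_0,h)\,d\mu_{n,j}.
\]
To obtain it I would apply \eqref{condE-eq}: on $\{p_j(\alpha_0)\ne 0\}$ one has $p_j(\alpha_0)\dot\eta_{nj}(\alpha_0,h)=\sqrt{p_j(\alpha_h)p_j(\alpha_0)}-p_j(\alpha_0)$, while on $\{p_j(\alpha_0)=0\}$ the factor $1-G_j$ makes the integrand vanish; since $\int p_j(\alpha_0)\,d\mu_{n,j}=\int p_j(\alpha_h)\,d\mu_{n,j}=1$, this gives $E_{\alpha_0}[\dot\eta_{nj}(\alpha_0,h)|\mca_{j-1,n}]=\int\sqrt{p_j(\alpha_h)p_j(\alpha_0)}\,d\mu_{n,j}-1=-\frac12\int\xi_{nj}^2(\alpha_0,h)\,d\mu_{n,j}$. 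Because (A2) yields $E_{\alpha_0}[b_{nj}|\mca_{j-1,n}]=0$, the predictable drift is $2\sum_j E_{\alpha_0}[\dot\eta_{nj}(\alpha_0,h)-b_{nj}|\mca_{j-1,n}]=-\sum_j\int\xi_{nj}^2(\alpha_0,h)\,d\mu_{n,j}$.

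Second, I would show $\sum_j\int\xi_{nj}^2(\alpha_0,h)\,d\mu_{n,j}\to\frac14 h^\top\mct_n h$ in $P_{\alpha_0,n}$-probability, which forces the drift to cancel $+\frac14 h^\top\mct_n h$ asymptotically. By \eqref{xi-conv1} the sum is asymptotically $\frac14\sum_j\int|h^\top\epsilon_n^\top\dot\xi_{nj}|^2\,d\mu_{n,j}$; by the identity \eqref{xi-eta-eq} this in turn equals $\frac14\sum_j E_{\alpha_0}[|h^\top\epsilon_n^\top\eta_j|^2|\mca_{j-1,n}]=\frac14 h^\top\mct_n h$ up to the term $\frac14\sum_j\int G_j|h^\top\epsilon_n^\top\dot\xi_{nj}|^2\,d\mu_{n,j}$, which vanishes by Lemma~\ref{jeg82-lemma3}.

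It then remains to show that the martingale part $2\sum_j c_{nj}$ tends to $0$, where $c_{nj}=\dot\eta_{nj}(\alpha_0,h)-b_{nj}-E_{\alpha_0}[\dot\eta_{nj}(\alpha_0,h)-b_{nj}|\mca_{j-1,n}]$ is an $\mca_{j,n}$-martingale difference; it is square-integrable because $E_{\alpha_0}[\dot\eta_{nj}^2(\alpha_0,h)|\mca_{j-1,n}]\le\int\xi_{nj}^2(\alpha_0,h)\,d\mu_{n,j}\le 2$ and (A2) controls $E_{\alpha_0}[b_{nj}^2|\mca_{j-1,n}]$. Its predictable quadratic variation obeys $\sum_j E_{\alpha_0}[c_{nj}^2|\mca_{j-1,n}]\le\sum_j E_{\alpha_0}[(\dot\eta_{nj}(\alpha_0,h)-b_{nj})^2|\mca_{j-1,n}]\to 0$ in $P_{\alpha_0,n}$-probability by \eqref{eta-diff-conv}, so Lenglart's inequality gives $\sum_j c_{nj}\to 0$; assembling the three contributions proves the lemma. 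I expect the main obstacle to be precisely this last passage: the vanishing of the conditional quadratic variation controls only the sum of \emph{squares} through Lemma~\ref{cond-exp-conv-lemma}, whereas the signed martingale sum requires a genuinely martingale-theoretic estimate such as Lenglart's (or the Lenglart--Rebolledo) inequality. A secondary point demanding care is the bookkeeping of the zero set $\{p_j(\alpha_0)=0\}$ through the factor $1-G_j$ in the conditional Hellinger computation.
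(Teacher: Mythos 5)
Your proposal is correct and follows essentially the same route as the paper: the conditional Hellinger identity $E_{\alpha_0}[\dot\eta_{nj}(\alpha_0,h)\,|\,\mca_{j-1,n}]=-\frac12\int\xi_{nj}^2\,d\mu_{n,j}$ combined with Lemma~\ref{jeg82-lemma4}, \eqref{xi-eta-eq}, and Lemma~\ref{jeg82-lemma3} to identify the predictable drift with $-\frac14 h^\top\mct_n h$, then a martingale-difference argument driven by \eqref{eta-diff-conv} for the centered part. The only cosmetic difference is that you invoke Lenglart's inequality directly where the paper cites Lemma~9 of Genon-Catalot and Jacod (1993), which is the same estimate.
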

\begin{proof}
(\ref{xi-eta-eq}) and Lemmas~\ref{jeg82-lemma4} and~\ref{jeg82-lemma3} yield
\begin{equation*}
\bigg|2\sum_{j=1}^{m_n}E_{\alpha_0}[\dot{\eta}_{nj}(\alpha_0,h)|{\colorp \mca_{j-1,n}}]+\frac{1}{4}h^\top \mct_nh\bigg| \to 0
\end{equation*}
in $P_{\alpha_0,n}$-probability similarly to Lemma 7 in~\cite{jeg82}.

Then by (A2), it is sufficient to show
$\sum_{j=1}^{m_n}[Y_j-E_{\alpha_0}[Y_j|{\colorp \mca_{j-1,n}}]]\to 0$ in $P_{\alpha_0,n}$-probability,
where $Y_j=2[\dot{\eta}_{nj}(\alpha_0,h)-h^\top \epsilon_n^\top\eta_j/2]$.

Since we have
\begin{equation*}
\sum_{j=1}^{m_n}E_{\alpha_0}[(Y_j-E_{\alpha_0}[Y_j|{\colorp \mca_{j-1,n}}])^2|{\colorp \mca_{j-1,n}}]\leq \sum_{j=1}^{m_n}E_{\alpha_0}[Y_j^2|{\colorp \mca_{j-1,n}}]\to 0
\end{equation*}
in $P_{\alpha_0,n}$-probability by (\ref{eta-diff-conv}),
Lemma 9 in Genon-Catalot and Jacod~\cite{gen-jac93} yields the conclusion.
\end{proof}

\noindent
{\bf Proof of Theorem~\ref{L2regu-thm}.}

We use a similar approach to Theorem~1 in~\cite{jeg82}.

For any $h\in\mathbb{R}^d$, Lemma~\ref{jeg82-lemma6} and Taylor's formula yield
\begin{eqnarray}
\log\frac{dP_{\alpha_h,n}}{dP_{\alpha_0,n}}
&=&2\sum_{j=1}^{m_n}\log(1+\dot{\eta}_{nj}(\alpha_0,h))
=2\sum_{j=1}^{m_n}\dot{\eta}_{nj}(\alpha_0,h)-\sum_{j=1}^{m_n}\dot{\eta}_{nj}^2(\alpha_0,h)
+\sum_{j=1}^{m_n}\beta_{nj}|\dot{\eta}_{nj}(\alpha_0,h)|^3 \nonumber
\end{eqnarray}
with probability tending to one, where $|\beta_{nj}|\leq 1$.

Together with Lemma~\ref{jeg82-lemma6}, we have
\begin{equation*}
\bigg|\log\frac{dP_{\alpha_h,n}}{dP_{\alpha_0,n}}
-2\sum_{j=1}^{m_n}\dot{\eta}_{nj}(\alpha_0,h)+\sum_{j=1}^{m_n}\dot{\eta}_{nj}^2(\alpha_0,h)\bigg|\to 0
\end{equation*}
in $P_{\alpha_0,n}$-probability. Therefore (A4) and Lemmas~\ref{jeg82-lemma5} and~\ref{jeg82-lemma7} yield the conclusion.

\qed

\section{The LAMN property via transition density approximation}\label{density-approx-section}

In this section, we give proofs of the results in Section~\ref{density-approx-subsection}.
{\colorp Let $K'_{n,j}=\bar{X}_j(K_{n,j})$.} \\

\noindent
{\bf Proof of Lemma \ref{B1-suff-lemma}.}

Let
\EQ{\label{delta-def} \delta_n=\max_{1\leq j\leq m_n}\sup_{\alpha\in \Theta,\bar{x}_{j-1}\in {\colorp K'_{n,j-1}}}|1-d_j (\bar{x}_{j-1},\alpha)|.}
Then, {\colorp by (\ref{p-diff-est}), we obtain  
\begin{equation}\label{dj-est}
\delta_n \leq \max_{1\leq j\leq m_n}\sup_{\bar{x}_{j-1}\in K'_{n,j-1}}\int |p_j(\alpha)-\tilde{p}_j(\alpha)|\mu_{n,j}(dx_j)=o(m_n^{-1}),
\end{equation}
and hence we have}
\EQ{\label{delta-est} (1-\delta_n)^{-m_n+1}=\exp(-(m_n-1)\log(1-\delta_n))=\exp(-(m_n-1)(-\delta_n+o(\delta_n)))\to 1}
{\colorp as $n\to\infty$. Then, for any $\epsilon>0$, the assumptions and the decomposition
\EQQ{\bigcup_{l=1}^{m_n-1}K_{n,l}^c=\bigsqcup_{l=1}^{m_n-1}\left[\left(\bigcap_{k=1}^{l-1} K_{n,k}\right)\bigcap K_{n,l}^c\right]}
yield}
\EQNN{\tilde{P}_{\alpha,n}(\cup_{l=1}^{m_n-1}K_{n,l}^c)
&=\sum_{l=1}^{m_n-1}\int \bigg(\prod_{j=1}^{m_n}\frac{\tilde{p}_j}{d_j}\bigg)1_{\cap_{k=1}^{l-1}K_{n,k}\cap K_{n,l}^c}\bigotimes_{j=1}^{m_n}\mu_{n,j}(dx_j) \\
&{\colorp = \sum_{l=1}^{m_n-1}\int \bigg(\prod_{j=1}^l\frac{\tilde{p}_j}{d_j}\bigg)1_{\bar{X}_l(\cap_{k=1}^{l-1}K_{n,k}\cap K_{n,l}^c)}\bigotimes_{j=1}^l\mu_{n,j}(dx_j)} \\
&\leq {\colorp \sum_{l=1}^{m_n-1}\frac{1}{(1-\delta_n)^l}\int \bigg(\prod_{j=1}^lp_j\bigg)1_{\bar{X}_l(\cap_{k=1}^{l-1}K_{n,k}\cap K_{n,l}^c)}\bigotimes_{j=1}^l\mu_{n,j}(dx_j)} \\
&\leq \frac{P_{\alpha,n}(\cup_{k=1}^{m_n-1}K_{n,k}^c)}{(1-\delta_n)^{m_n-1}}<\epsilon}
for any $\alpha$ and sufficiently large $n$.
\qed\\

\noindent
{\bf Proof of Theorem~\ref{log-likelihood-approx-thm}.}

First, (\ref{p-diff-est}) {\colorp and (\ref{dj-est}) imply that}
\begin{eqnarray}\label{dj-est2}
\sup_{\bar{x}_{j-1}\in K'_{n,j-1}}\int \bigg|p_j-\frac{\tilde{p}_j}{d_j}\bigg|\mu_{n,j}(dx_j)
\leq \sup_{\bar{x}_{j-1}\in K'_{n,j-1}}\bigg(|1-d_j^{-1}|+d_j^{-1}\int|p_j-\tilde{p}_j|\mu_{n,j}(dx_j)\bigg)=o(m_n^{-1}).
\end{eqnarray}
For any {\colorp $m\in\mbbn$} and nonnegative sequences {\colorp $\{a_j\}$ and $\{b_j\}$}, we can show that
\begin{equation}\label{pro-esti}
{\colorp \left|\prod_{j=1}^m a_j-\prod_{j=1}^m b_j\right|\leq \sum_{l=1}^m\left(\prod_{j=1}^{l-1} a_j\right)|a_l-b_l|\left(\prod_{j=l+1}^mb_j\right),}
\end{equation}
by induction.
Let $\mathcal{K}_n=\cap_{j=1}^{m_n-1}K_{n,j}\subset \mathcal{X}_n$. For any $\epsilon>0$, it follows from (B1), \eqref{dj-est2}, and \eqref{pro-esti} that
\EQNN{\lVert P_{\alpha,n}-\tilde{P}_{\alpha,n}\rVert
&\leq \lVert P_{\alpha,n}|_{\mathcal{K}_n}-\tilde{P}_{\alpha,n}|_{\mathcal{K}_n}\rVert+P_{\alpha,n}(\mathcal{K}_n^c)+\tilde{P}_{\alpha,n}(\mathcal{K}_n^c) \\
&\leq \int \bigg|\prod_{j=1}^{m_n}p_j-\prod_{j=1}^{m_n}\frac{\tilde{p}_j}{d_j}\bigg|1_{\mathcal{K}_n}{\colorp \bigotimes_{j=1}^{m_n}\mu_{n,j}(dx_j)}+2\epsilon \\
&\leq \sum_{l=1}^{m_n}\int \bigg(\prod_{j=1}^{l-1}p_j\bigg)\bigg|p_l-\frac{\tilde{p}_l}{d_l}\bigg|\bigg(\prod_{j=l+1}^{m_n}\frac{\tilde{p}_j}{d_j}\bigg)
{\colorp 1_{K_{n,l-1}}\bigotimes_{j=1}^{m_n}\mu_{n,j}(dx_j)}+2\epsilon \\
&\leq o(m_n^{-1})\times \sum_{l=1}^{m_n}\int \bigg(\prod_{j=1}^{l-1}p_j\bigg){\colorp \bigotimes_{j=1}^{l-1}\mu_{n,j}(dx_j)}+2\epsilon
=o(1)+2\epsilon,}
for any $\alpha$ and sufficiently large $n$, which implies $\sup_\alpha\lVert P_{\alpha,n}-\tilde{P}_{\alpha,n}\rVert\to 0$ as $n\to\infty$.
{\colorp If futher, (\ref{tildeP-cond}) is satisfied, Proposition 4.3.2 in Le Cam~\cite{lec86} and the discussion after that lead} to the conclusion.

\qed\\

\noindent
{\bf Proof of Theorem~\ref{density-approx-thm}.} Let
\EQ{\label{eta-def} \eta_j=\frac{\PA(\tilde{p}_j/d_j)|_{\alpha=\alpha_0}}{\tilde{p}_j(\alpha_0)/d_j(\bar{x}_{j-1},\alpha_0)}1_{\{\tilde{p}_j(\alpha_0)\neq 0\}}1_{K'_{n,j-1}},}
then we have
\EQ{\label{eta-eq} \epsilon_n^\top \eta_j={\colorp \tilde{\eta}_j}-\frac{(\PTT D_{j,e_1},\cdots, \PTT D_{j,e_d})}{d_j(\bar{x}_{j-1},\alpha_0)}1_{\{\tilde{p}_j(\alpha_0)\neq 0\}}1_{K'_{n,j-1}}.
}
We verify (A1)--(A4) in Theorem~\ref{L2regu-thm} for $\tilde{P}_{\alpha,n}$.
Fix $\epsilon>0$ such that {\colorp $\{\alpha_{t\epsilon e_i}\}_{t\in [0,1]}\subset \Theta$ and (\ref{zeta-integrable}) is satisfied with $h=e_i$ and $\delta=\epsilon$} for any $1\leq i\leq d$.
Then (\ref{zeta-integrable}) and Fubini's theorem yield
\EQN{\label{tildePderivaEst} &\int \sup_{t\in [0,1]}\bigg|\frac{d}{dt}\tilde{p}_j(\alpha_{t\epsilon e_i})\bigg|\mu_{n,j}(dx_j) \\
&\quad \leq \int \bigg|\frac{d}{dt}\tilde{p}_j(\alpha_{t\epsilon e_i})\bigg|_{t=0}\bigg|\mu_{n,j}(dx_j)
+\int \int^1_0\bigg|\bigg(\frac{d}{dt}\bigg)^2\tilde{p}_j(\alpha_{t\epsilon e_i})\bigg|dt\mu_{n,j}(dx_j) \\
&\quad \leq \int \bigg|\frac{d}{dt}\tilde{p}_j(\alpha_{t\epsilon e_i})\bigg|_{t=0}\bigg|\mu_{n,j}(dx_j)
+\sup_{t\in [0,1]}\int\bigg|\bigg(\frac{d}{dt}\bigg)^2\tilde{p}_j(\alpha_{t\epsilon e_i})\bigg|\mu_{n,j}(dx_j)<\infty
}
for $\bar{x}_{j-1}\in K'_{n,j-1}$ and $1\leq i\leq d$ under (B2).
Therefore, we obtain
\EQN{\label{eta-j-zero} \tilde{E}_{\alpha_0}[\eta_j|\mca_{j-1,n}]
&=(\epsilon_n^\top)^{-1}\int \frac{d}{dt}\bigg(\frac{\tilde{p}_j(\alpha_{te_i})}{D_{j,e_i}(\bar{x}_{j-1},t)}\bigg)\bigg|_{t=0}{\colorp 1_{\{\tilde{p}_j(\alpha_0)\neq 0\}}}\mu_{n,j}(dx_j)1_{K'_{n,j-1}} \\
&= (\epsilon_n^\top)^{-1}\frac{d}{dt}\int\frac{\tilde{p}_j(\alpha_{te_i})}{D_{j,e_i}(\bar{x}_{j-1},t)}\mu_{n,j}(dx_j)\bigg|_{t=0}1_{K'_{n,j-1}}=0}
{\colorp because zero points of $\tilde{p}_j$ do not depend on $\alpha$.}
We also have $E_{\alpha_0}[|\eta_j|^2|\mca_{j-1,n}]<\infty$ by (\ref{D-del-est}), (\ref{delta-est}), {\colorp (\ref{eta-eq}), and (B3) for sufficiently large $n$}, which implies (A2).

Next, we verify (A1).
Fix arbitrary $h\in\mbbr^d$. We have $\{\alpha_{th}\}_{t\in [0,1]}\subset \Theta$ and {\colorp $\delta_n$ defined in (\ref{delta-def}) satisfies} $\delta_n\geq 1/2$ for sufficiently large $n$. Let
\EQQ{f_{j,h}(t)=\frac{\tilde{p}_j(\alpha_{th})}{D_{j,h}(\bar{x}_{j-1},t)},}
then similarly to (2.6) in~\cite{fuk-ogi20}, a simple calculation yields
\EQNN{&\int \bigg[\xi_{nj}(\alpha_0,h)-\frac{1}{2}h^\top \epsilon_n^\top \dot{\xi}_{nj}(\alpha_0)\bigg]^2d\mu_{n,j}1_{K'_{n,j-1}} \\
&\quad \leq \frac{1}{4}\sup_{0\leq t\leq 1}\tilde{E}_{\alpha_{th}}\bigg[\bigg|\frac{\partial_t^2f_{j,h}}{f_{j,h}}(t)-\frac{(\partial_t f_{j,h})^2}{2f_{j,h}^2}(t)\bigg|^2{\colorp 1_{\{f_{j,h}(t)\neq 0\}}}\bigg|\mca_{j-1,n}\bigg]1_{K'_{n,j-1}},}
{\colorp where $\dot{\xi}_{nj}(\alpha_0)=\eta_j\sqrt{f_{j,h}(0)}$.}
Moreover, since
\EQNN{\frac{\partial_t f_{j,h}}{f_{j,h}}{\colorp 1_{\{f_{j,h}\neq 0\}}}&=\zeta_{j,t}^{1,h}-\frac{\partial_t D_{j,h}}{D_{j,h}}(\bar{x}_{j-1},t){\colorp 1_{\{f_{j,h}\neq 0\}}}, \\
\frac{\partial_t^2 f_{j,h}}{f_{j,h}}{\colorp 1_{\{f_{j,h}\neq 0\}}}&=\zeta_{j,t}^{2,h}-2\zeta_{j,t}^{1,h}\frac{\partial_t D_{j,h}}{D_{j,h}}
+\bigg(2\frac{(\partial_t D_{j,h})^2}{D_{j,h}^2}-\frac{\partial_t^2D_{j,h}}{D_{j,h}}\bigg){\colorp 1_{\{f_{j,h}\neq 0\}}},
}
(B3) and (B2) imply
\EQN{\label{L2-regu-ineq} &\sum_{j=1}^{m_n}\sup_{t\in[0,1]}\tilde{E}_{\alpha_{th}}\bigg[\bigg\{\frac{\partial_t^2 f_{j,h}}{f_{j,h}}1_{\{f_{j,h}\neq 0\}}\bigg\}^2\bigg|\mca_{j-1,n}\bigg]1_{\cap_{j'}K_{n,j'}} \\
&\quad \leq C\sum_{j=1}^{m_n}\sup_{t\in[0,1]}\bigg\{\tilde{E}_{\alpha_{th}}[|\zeta_{j,t}^{2,h}|^2|\mca_{j-1,n}]
+ \tilde{E}_{\alpha_{th}}[|\zeta_{j,t}^{1,h}|^2|\mca_{j-1,n}]\cdot \bigg|\frac{\partial_t D_{j,h}}{D_{j,h}}\bigg|^2
+\bigg|\frac{\partial_t D_{j,h}}{D_{j,h}}\bigg|^4
+\bigg|\frac{\partial_t^2 D_{j,h}}{D_{j,h}}\bigg|^2\bigg\}1_{\cap_{j'}K_{n,j'}} \\
&\quad \leq o_p(1)+C\bigg\{\sum_{j=1}^{m_n}\sup_{t\in[0,1]}\tilde{E}_{\alpha_{th}}[|\zeta_{j,t}^{1,h}|^4|\mca_{j-1,n}]\bigg\}^{\frac{1}{2}}
\bigg\{\sum_{j=1}^{m_n}\sup_{t\in[0,1]}\bigg|\frac{\partial_t D_{j,h}}{D_{j,h}}\bigg|^4\bigg\}^{\frac{1}{2}}1_{\cap_{j'}K_{n,j'}} + o_p(1) \TOP 0.
}
Similarly, {\colorp we have}
\EQ{\label{f4-est} \sum_{j=1}^{m_n}\sup_{t\in[0,1]}\tilde{E}_{\alpha_{th}}{\colorp \bigg[\bigg|\frac{\PTT f_{j,h}}{f_{j,h}}\bigg|^41_{\{f_{j,h}\neq 0\}}\bigg|\mca_{j-1,n}\bigg]1_{\cap_{j'}K_{n,j'}}}\TOP 0.}
Together with (B1), (A1) holds. We can similarly show (A3).

Furthermore, because
\begin{equation*}
\sum_{j=1}^{m_n}\bigg(\frac{{\colorp \PTT} D_{j,h}}{D_{j,h}}\bigg)^k1_{\cap_{j'}K_{n,j'}}\to^{P_{\alpha_0,n}} 0, \quad \sum_{j=1}^{m_n}\frac{\partial_t D_{j,h}}{D_{j,h}}\tilde{E}_{\alpha_0}[\zeta_{j,0}^{1,h}|\mca_{j-1,n}]1_{\cap_{j'}K_{n,j'}}\to^{P_{\alpha_0,n}} 0
\end{equation*}
for {\colorp $k\in \{1,2\}$, together with (\ref{eta-eq}) and (B1),} we have
\EQ{\label{B4-conv} \bigg(\epsilon_n^\top\sum_{j=1}^{m_n}\eta_j,\sum_{j=1}^{m_n}\epsilon_n^\top\tilde{E}_{\alpha_0}[\eta_j\eta_j^\top|\mca_{j-1,n}]\epsilon_n\bigg)
=(\tilde{V}_n,\tilde{\mct}_n)+o_p(1),
}
and hence (A4) holds by {\colorp (B4)}.
Therefore, Condition (L) for $\{\tilde{P}_{\alpha,n}\}_{\alpha,n}$ holds by Theorem~\ref{L2regu-thm}.
Together with Theorem~\ref{log-likelihood-approx-thm}, we have Condition (L)
for $\{P_{\alpha,n}\}_{\alpha,n}$.
Then Remark~\ref{LAMN-remark} yields the LAMN property when $\mct$ is positive definite almost surely and $\epsilon_n$ is {\colorp symmetric and} positive definite for any $n$.

\qed

\noindent
{\bf Proof of Corollary~\ref{B4'-cor}.}\\

It is sufficient to verify {\colorp (A4) for
$\{\tilde{P}_{\alpha,n}\}_{\alpha,n}$ and $\eta_j$ defined by (\ref{eta-def})}.
(\ref{B4-conv}) and (B4$'$) imply
\EQ{\label{density-LAMN-cor-eq1} \epsilon_n^\top\sum_{j=1}^{m_n}\tilde{E}_{\alpha_0}[\eta_j\eta_j^\top|{\colorp \mca_{j-1,n}}]\epsilon_n \TOP \mct.}
Moreover, by setting $t=0$, {\colorp (\ref{f4-est}) and (B1) yield}
\EQ{\label{density-LAMN-cor-eq2} \sum_{j=1}^{m_n}\tilde{E}_{\alpha_0}[|\epsilon_n^\top\eta_j|^4|{\colorp \mca_{j-1,n}}]\TOP 0.}
%and (\ref{density-LAMN-cor-eq1}) and Lemma~\ref{jeg82-lemma1} yield
%\EQ{\label{density-LAMN-cor-eq3} \sum_j|h^\top \epsilon_n^\top\eta_j|^2\TOP h^\top \mct h.}

{\colorp Therefore,} (\ref{eta-j-zero}), (\ref{density-LAMN-cor-eq2}), {\colorp (\ref{density-LAMN-cor-eq1}),} and a martingale central limit theorem (see Cor~3.1 and the following remark in Hall and Hyde~\cite{hal-hey80} for example) yield
\EQQ{\mcl\bigg(\epsilon_n^\top\sum_{j=1}^{m_n}\eta_j\bigg|P_{\alpha_0,n}\bigg)\to N(0,\mct),}
which implies (A4).

\qed

\section{Proof of the LAN property for jump-diffusion processes}\label{jump-LAN-section}

In this section, we show the LAN property of jump-diffusion processes based on the scheme proposed in Section~\ref{density-approx-subsection}.
We approximate the genuine likelihood by a thresholding likelihood that can roughly distinguish whether the increments contain at least one jump or not.
We introduce some conventions used in the rest of this paper.
\begin{itemize}
\item For a matrix $A$ and a vector $v$, we denote element $(i,j)$ of $A$ by $[A]_{ij}$ and element $i$ of $v$ by $[v]_i$. We often regard an $r$-dimensional vector $v$ as an $r\times 1$ matrix.
\item $C$ and $C_p$ denote generic positive constants whose values may vary depending on context.
%\item For a positive sequence $\{c_n\}$, $\{R_k(c_n)\}$ represents a random sequence satisfying that for any $q>0$, there exists $C>0$ such that
%\EQQ{E_{\alpha_0}[|c_n^{-1}R_k(c_n)|^q|\mcf_{t_{k-1}}]\leq C(1+|X_{t_{k-1}}|)^C.}
%The form of {\colorp $\{R_k(c_n)\}$} may vary depending on context as well as $C$ and $C_p$.
\end{itemize}

\medskip

{\colorp To} deal with the continuous part of $X$, we briefly review the results of {\colorp Nualart~\cite{nua06} and} Gobet~\cite{gob01,gob02} in the following section.

\subsection{Results for continuous part}\label{conti-part-subsection}

We define the stochastic process $(X^{\alpha,c}_t)_{t\geq 0}=(X^{\alpha,c}_{t,x})_{t\geq 0}$ by the solution of
the stochastic differential equation: $X_0^{\alpha,c}=x$ and
\EQQ{dX_t^{\alpha,c}=a(X_t^{\alpha,c},\theta)dt+b(X_t^{\alpha,c},\sigma)dW_t.}
Then, {\colorp under (C1),} Theorem~39 in Chapter V of Protter~\cite{pro90} ensures the existence of $\partial_\alpha^lX_t^{\alpha,c}$ for $l\in \{1,2\}$, and we have
\EQQ{\partial_\alpha X_t^{\alpha,c}=\int^t_0(\partial_\alpha a(X_s^{\alpha,c},\theta)+\partial_xa(X_s^{\alpha,c},\theta)\partial_\alpha X_s^{\alpha,c})ds
+\int^t_0(\partial_\alpha b(X_s^{\alpha,c},\sigma)+\partial_xb(X_s^{\alpha,c},\sigma)\partial_\alpha X_s^{\alpha,c})dW_s,
}
{\colorp \EQNN{
&\partial_{\alpha_i}\partial_{\alpha_j} X_t^{\alpha,c} \\
&\quad =\int^t_0\bigg\{\partial_{\alpha_i}\partial_{\alpha_j}a+\sum_k(\partial_{x_k}\partial_{\alpha_i} a\mcz^{j,k}_s+\partial_{x_k}\partial_{\alpha_j} a\mcz^{i,k}_s+\partial_{x_k} a\mcz_s^{i,j,k})+\sum_{k,l}\partial_{x_k}\partial_{x_l}a\mcz^{i,k}_s\mcz^{j,l}_s\bigg\}(X_s^{\alpha,c},\theta)ds \\
&\quad \quad +\int^t_0\bigg\{\partial_{\alpha_i}\partial_{\alpha_j}b+\sum_k(\partial_{x_k}\partial_{\alpha_i} b\mcz_s^{j,k}+\partial_{x_k}\partial_{\alpha_j} b\mcz_s^{i,k}+\partial_{x_k}b \mcz_s^{i,j,k})+\sum_{k,l}\partial_{x_k}\partial_{x_l}b\mcz_s^{i,k}\mcz_s^{j,l}\bigg\}(X_s^{\alpha,c},\sigma)dW_s,
}
where $\mcz^{j,k}_s=\partial_{\alpha_j} [X_s^{\alpha,c}]_k$ and $\mcz^{i,j,k}_s=\partial_{\alpha_i}\partial_{\alpha_j} [X_s^{\alpha,c}]_k$. }
%\EQNN{\partial_\alpha^2 X_t^{\alpha,c}&=\int^t_0(\partial_\alpha^2a+2\partial_x\partial_\alpha a\partial_\alpha X_s^{\alpha,c}+\PX a\partial_\alpha^2X_s^{\alpha,c}+\partial_x^2a(\partial_\alpha X_s^{\alpha,c})^2)(X_s^{\alpha,c},\theta)ds \\
%&\quad +\int^t_0(\partial_\alpha^2b+2\partial_x\partial_\alpha b\partial_\alpha X_s^{\alpha,c}+\PX b\partial_\alpha^2X_s^{\alpha,c}+\partial_x^2b(\partial_\alpha X_s^{\alpha,c})^2)(X_s^{\alpha,c},\sigma)dW_s.}
\begin{discuss}
{\colorr ガウスの時\EQQ{(nh_n)^{-2}\sum_j\partial_\theta a_jS_j^{-1}(\Delta_j X-h_na_j)=O(nh_n^2/(nh_n)^2).}}
\end{discuss}

Let {\colorp $T\in (0,1]$}. Together with Theorem~2.2.1 and Lemma 2.2.2 in Nualart~\cite{nua06}, (C1), and Gronwall's inequality, we have
\EQN{\label{delX-est} E[|\partial_\sigma^lX_{t,x}^{\alpha,c}|^p]^{1/p}\leq C_p\sqrt{T}(1+|x|)^{C_p},
\quad E[|\PT^{1+l_1}\PS^{l_2}X_{t,x}^{\alpha,c}|^p]^{1/p}\leq C_p T(1+|x|)^{C_p}, \\
E\Big[\sup_{r\in [0,t]}|D_r\PS^{l_3}X_{t,x}^{\alpha,c}|^p\Big]^{1/p}\leq C_p(1+|x|)^{C_p},
\quad E\Big[\sup_{r\in [0,t]}|D_r\PT X_{t,x}^{\alpha,c}|^p\Big]^{1/p}\leq C_pT(1+|x|)^{C_p}, \\
\lVert \PS^{l_3}X_{t,x}^{\alpha,c}\rVert_{3-{\colorp l_3},p}\leq C_p(1+|x|)^{C_p},
\quad \lVert \PT^lX_{t,x}^{\alpha,c}\rVert_{3-l,p}\leq C_pT(1+|x|)^{C_p},
\quad \lVert \PS\PT X_{t,x}^{\alpha,c}\rVert_{1,p}\leq C_pT(1+|x|)^{C_p}
}
for some positive constant $C_p$ that depends on only $p$ and for $0\leq t\leq T$, $l\in \{1,2\}$, $(l_1,l_2)\in \{(0,0),(1,0),(0,1)\}$, and $l_3\in \{0,1,2\}$,
where $D$ is the Malliavin--Shigekawa derivative related to the underlying Hilbert space $H=L^2([0,{\colorp T}];\mbbr^m)$, and $\lVert \cdot \rVert_{k,p}$ denotes the seminorm defined in (1.37) in~\cite{nua06}.

We define an $m\times m$ matrix-valued process $\mathcal{U}_t^\alpha$ by
\begin{equation}
[\mathcal{U}_t^\alpha]_{ij}=\delta_{ij}+\sum_{k=1}^m\int^t_0\partial_{x_k} [a]_i(X_s^{\alpha,c},\theta)[\mathcal{U}_s^\alpha]_{kj} ds
+\sum_{k,l=1}^m\int^t_0\partial_{x_k} [b]_{il}(X_s^{\alpha,c},\sigma)[\mathcal{U}_s^\alpha]_{kj} d[W_s]_l,
\end{equation}
where $\delta_{ij}$ is the Kronecker delta.
Then, by the argument in Section 2.3.1 of Nualart~\cite{nua06}, $\mathcal{U}_t^\alpha$ is invertible and we have
\begin{eqnarray}
[(\mathcal{U}_t^\alpha)^{-1}]_{ij}
&=&\delta_{ij}-\sum_{k=1}^m\int^t_0[(\mathcal{U}_s^\alpha)^{-1}]_{ik}\bigg(\partial_{x_j} [a]_k(X_s^{\alpha,c},\theta)-\sum_{l,p=1}^m\partial_{x_p} [b]_{kl}(X_s^{\alpha,c},\sigma)\partial_{x_j} [b]_{pl}(X_s^{\alpha,c},\sigma)\bigg)ds \nonumber \\
&&-\sum_{k,l=1}^m\int^t_0[(\mathcal{U}_s^\alpha)^{-1}]_{ik}\partial_{x_j} [b]_{kl}(X_s^{\alpha,c},\sigma) d[W_s]_l. \nonumber
\end{eqnarray}

Lemma 2.2.2 in Nualart~\cite{nua06} yields
\EQ{\label{U-est} \lVert \mcu^\alpha\rVert_{2,p}\vee \lVert (\mcu^\alpha)^{-1}\rVert_{2,p}\leq C_p(1+|x|)^{C_p}.}

Conditions (C1) and (C2) and Theorem~2.3.1 in~\cite{nua06} ensure existence of the density function of $X^{\alpha,c}_{t,x}$.
Let us denote this density by $p_{x,\alpha}^{c,t}(y)$.
{\colorp Let $\delta$ be the Hitsuda-Skorokhod integral (the divergence operator).}

The following proposition is Proposition 2.2 in Gobet~\cite{gob02} and Lemma 3.6 in Ogihara~\cite{ogi15}. The assumptions for the original ones are different from this proposition. However, we can verify from their proofs that the results hold under (C1) and (C2).
\begin{proposition}\label{gob-prop2}
Assume (C1) and (C2) and set $T\in (0,1]$. Let $U_{l,t}^{\alpha,T}=([b^{-1}(X_t^{\alpha,c},\sigma)\mathcal{U}_t^\alpha(\mathcal{U}_T^\alpha)^{-1}]_{lk})_{k=1}^m$, {\colorp and $U_t^{\alpha,T}=(U_{1,t}^{\alpha,T} ~\cdots~ U_{m,t}^{\alpha,T})^\top$} for $t\in [0,T]$. Then
\begin{eqnarray}
\frac{\partial_\alpha p_{x,\alpha}^{c,T}}{p_{x,\alpha}^{c,T}}(y)&=&E_x\bigg[T^{-1}\delta((U^{\alpha,T})^\top\partial_\alpha X_T^{\alpha,c})\bigg|X_T^{\alpha,c}=y\bigg], \nonumber \\
\frac{\partial_\alpha^2p_{x,\alpha}^{c,T}}{p_{x,\alpha}^{c,T}}(y)
&=&E_x\bigg[T^{-1}\delta((U^{\alpha,T})^\top\partial_\alpha^2X_T^{\alpha,c})
+T^{-2}\sum_{i=1}^m\delta(U_i^{\alpha,T}\delta((U^{\alpha,T})^\top\partial_\alpha X_T^{\alpha,c}\partial_\alpha [X_T^{\alpha,c}]_i))\bigg|X_T^{\alpha,c}=y\bigg]. \nonumber
\end{eqnarray}
\end{proposition}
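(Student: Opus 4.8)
The statement is the Malliavin-calculus representation of the first two logarithmic derivatives of the transition density of the continuous part $X^{\alpha,c}$, and the plan is to follow the integration-by-parts argument of Gobet~\cite{gob01,gob02} (see also Nualart~\cite{nua06}), verifying along the way that (C1) and (C2) supply exactly the regularity and nondegeneracy needed. The starting point is the weak formulation: for every bounded $\varphi\in C^\infty(\mbbr^m)$,
\[
\int \varphi(y)\,\partial_\alpha p_{x,\alpha}^{c,T}(y)\,dy=\partial_\alpha E_x[\varphi(X_T^{\alpha,c})]
=E_x[\nabla\varphi(X_T^{\alpha,c})\cdot \partial_\alpha X_T^{\alpha,c}],
\]
where the interchange of $\partial_\alpha$ and $E_x$ is licensed by the moment bounds on $\partial_\alpha X_T^{\alpha,c}$ in \eqref{delX-est}. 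Since this identity determines $\partial_\alpha p_{x,\alpha}^{c,T}$ as a function, it suffices to rewrite the right-hand side as $E_x[\varphi(X_T^{\alpha,c})\,\mcw]$ for a weight $\mcw$; then $\partial_\alpha p_{x,\alpha}^{c,T}(y)=E_x[\mcw\,|\,X_T^{\alpha,c}=y]\,p_{x,\alpha}^{c,T}(y)$, which is the claimed formula after dividing by $p_{x,\alpha}^{c,T}(y)$.

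The heart of the matter is the explicit inversion of the Malliavin derivative. Using $D_r X_T^{\alpha,c}=\mcu_T^\alpha(\mcu_r^\alpha)^{-1}b(X_r^{\alpha,c},\sigma)$ for $r\le T$ and the definition of $U^{\alpha,T}$, one checks that $U^{\alpha,T}$ is built precisely so that $\langle D[X_T^{\alpha,c}]_i,(U^{\alpha,T})^\top\partial_\alpha X_T^{\alpha,c}\rangle_H=T\,\partial_\alpha[X_T^{\alpha,c}]_i$, the factor $T$ arising from integrating the constant over $[0,T]$. Consequently $\langle D\varphi(X_T^{\alpha,c}),\,T^{-1}(U^{\alpha,T})^\top\partial_\alpha X_T^{\alpha,c}\rangle_H=\nabla\varphi(X_T^{\alpha,c})\cdot\partial_\alpha X_T^{\alpha,c}$, and the duality $E_x[\langle DG,u\rangle_H]=E_x[G\,\delta(u)]$ between $D$ and the divergence operator $\delta$ turns the right-hand side above into $E_x[\varphi(X_T^{\alpha,c})\,T^{-1}\delta((U^{\alpha,T})^\top\partial_\alpha X_T^{\alpha,c})]$. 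This yields the first formula, provided $(U^{\alpha,T})^\top\partial_\alpha X_T^{\alpha,c}$ lies in the domain of $\delta$, which follows from the Sobolev estimates \eqref{delX-est} and \eqref{U-est}.

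For the second logarithmic derivative I would differentiate the identity $\partial_\alpha E_x[\varphi(X_T^{\alpha,c})]=E_x[\nabla\varphi(X_T^{\alpha,c})\cdot\partial_\alpha X_T^{\alpha,c}]$ once more in $\alpha$. This produces two groups of terms: a second-order term $E_x[\nabla\varphi(X_T^{\alpha,c})\cdot\partial_\alpha^2 X_T^{\alpha,c}]$, treated by a single integration by parts exactly as above and producing $T^{-1}\delta((U^{\alpha,T})^\top\partial_\alpha^2 X_T^{\alpha,c})$; and a quadratic term $E_x[\nabla^2\varphi(X_T^{\alpha,c})[\partial_\alpha X_T^{\alpha,c},\partial_\alpha X_T^{\alpha,c}]]$, which requires two successive integrations by parts. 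Applying the inversion identity first to one Hessian slot (contracting against $\partial_\alpha[X_T^{\alpha,c}]_i$) and then to the other produces the nested divergence $T^{-2}\sum_{i=1}^m\delta(U_i^{\alpha,T}\delta((U^{\alpha,T})^\top\partial_\alpha X_T^{\alpha,c}\,\partial_\alpha[X_T^{\alpha,c}]_i))$; collecting the two groups and conditioning on $X_T^{\alpha,c}=y$ gives the stated expression. The higher-order estimates in \eqref{delX-est} and \eqref{U-est} guarantee that the inner divergence is again a legitimate $\delta$-integrand, so the outer $\delta$ is well defined.

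It remains to confirm that (C1) and (C2) replace the hypotheses of~\cite{gob01,gob02,ogi15}. Condition (C1) gives the $C^4$-regularity and polynomial growth of $a$ and $b$ needed for the existence, Malliavin smoothness, and moment bounds of $X_T^{\alpha,c}$, $\partial_\alpha^l X_T^{\alpha,c}$, $\mcu^\alpha$, and $(\mcu^\alpha)^{-1}$ recorded in \eqref{delX-est} and \eqref{U-est}; condition (C2) supplies the uniform ellipticity $C_2^{-1}I_m\le b\le C_2 I_m$, which both makes $U^{\alpha,T}$ well defined through $b^{-1}$ and guarantees nondegeneracy of the Malliavin covariance matrix of $X_T^{\alpha,c}$, hence the existence and smoothness of $p_{x,\alpha}^{c,T}$ and the validity of the integration-by-parts formula. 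I expect the main obstacle to be the rigorous justification of conditioning on the event $\{X_T^{\alpha,c}=y\}$ --- that is, passing from the weak, test-function identities to pointwise density formulas --- together with checking that the nested Skorokhod integrand in the second-order formula stays in the correct Sobolev class under only (C1)--(C2); both are handled by the nondegeneracy from (C2) and the uniform-in-$x$ Sobolev bounds from (C1), but they are the steps where the weakened hypotheses must be tracked carefully.
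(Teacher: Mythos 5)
Your proposal is correct and follows essentially the same route as the paper, which does not reprove the proposition but simply cites Proposition 2.2 of Gobet (2002) and Lemma 3.6 of Ogihara (2015) and notes that their integration-by-parts proofs go through under (C1) and (C2); your sketch is a faithful reconstruction of exactly that argument (weak formulation, inversion of the Malliavin derivative via $U^{\alpha,T}$, duality with $\delta$, and a second integration by parts for the Hessian term). The points you flag as delicate --- the passage from test-function identities to conditional expectations and the domain of the nested Skorokhod integral --- are precisely what the paper's one-line justification defers to the cited proofs, supported by the bounds \eqref{delX-est} and \eqref{U-est}.
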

\begin{discuss}
{\colorr $a,b$に$t$を入れるには$X^{\alpha,c}$, $p^{c,T}_x$のnotationを少し変える必要があるが, densityの存在定理なども引き続き使える．しかしエルゴード性の部分がだめで極限が書けないか}
\end{discuss}

\begin{proposition}\label{pc-est}
Assume (C1) and (C2). Then for any $p\geq 2$, there exists a positive constant $C_p$ such that
\begin{eqnarray}
\int \bigg|\frac{\partial_\theta^l p_{x,\alpha}^{c,T}}{p_{x,\alpha}^{c,T}}\bigg|^pp_{x,\alpha}^{c,T}(y)dy&\leq &C_pT^{p/2}(1+|x|)^{C_p}, \label{pc-est-lemma-eq1} \\
\int\bigg|\frac{\PS^l p_{x,\alpha}^{c,T}}{p_{x,\alpha}^{c,T}}\bigg|^pp_{x,\alpha}^{c,T}(y)dy&\leq &C_p(1+|x|)^{C_p}, \label{pc-est-lemma-eq2} \\
\int\bigg|\frac{\PS\PT p_{x,\alpha}^{c,T}}{p_{x,\alpha}^{c,T}}\bigg|^pp_{x,\alpha}^{c,T}(y)dy&\leq &C_pT^{p/2}(1+|x|)^{C_p} \label{pc-est-lemma-eq3}
\end{eqnarray}
for any $\alpha\in\Theta$, $x\in \mbbr^m$, $T \in (0,1]$, and $l\in \{1,2\}$.
\end{proposition}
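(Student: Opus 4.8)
The plan is to combine the Malliavin-calculus representations of Proposition~\ref{gob-prop2} with the continuity of the divergence operator, using the fact that $\PCC$ is a density to turn each weighted $L^p$-integral into an ordinary $L^p(\Omega)$-moment.

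First, since $\PCC$ is the density of $\XC_{T,x}$, for any measurable $g$ we have $\int|g(y)|^p\PCC(y)dy=E_x[|g(\XC_T)|^p]$. Writing the log-derivatives in Proposition~\ref{gob-prop2} as $\frac{\PA^l\PCC}{\PCC}(y)=E_x[\Phi\,|\,\XC_T=y]$ for the corresponding Skorokhod integrals $\Phi$, composition with $\XC_T$ gives $\frac{\PA^l\PCC}{\PCC}(\XC_T)=E_x[\Phi\,|\,\XC_T]$, so the conditional Jensen inequality and the tower property yield
\[ \int\Big|\frac{\PA^l\PCC}{\PCC}(y)\Big|^p\PCC(y)dy=E_x\big[|E_x[\Phi\,|\,\XC_T]|^p\big]\le E_x[|\Phi|^p]. \]
Thus it suffices to bound the $L^p(\Omega)$-norms of the integrals $\Phi$ occurring in Proposition~\ref{gob-prop2}, with the asserted dependence on $T$ and $|x|$; specializing $\PA$ to $\PS$, to $\PT$, and to the mixed derivative covers all three assertions.

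Next I would estimate $E_x[|\Phi|^p]$ by Meyer's inequalities for the divergence: $\|\delta(u)\|_{L^p}\le C_p\|u\|_{1,p}$ for the first-order representation, and $\|\delta(u)\|_{1,p}\le C_p\|u\|_{2,p}$ for the iterated integral in the second identity. The integrands are products of $U^{\alpha,T}$ (built from $b^{-1}$, $\mcu^\alpha$, and $(\mcu_T^\alpha)^{-1}$) with $\PA\XC_T$, $\PA^2\XC_T$, or $\PA[\XC_T]_i$; applying the Leibniz rule for the Malliavin derivative together with H\"older's inequality factors the Sobolev norms of these products, and each factor is controlled by (\ref{delX-est}), (\ref{U-est}), and the boundedness of $b^{-1}$ and its $x$-derivatives coming from (C1)--(C2). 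Since all the objects in (\ref{delX-est}) and (\ref{U-est}) have finite moments of every order, the H\"older bookkeeping is harmless and the spatial growth $(1+|x|)^{C_p}$ propagates through every factor.

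I expect the only delicate point to be the precise tracking of powers of $T$, which is exactly what distinguishes the three estimates. The crucial observation is that every factor placed inside a divergence is $H$-valued with $H=L^2([0,T];\mbbr^m)$, so each $H$-integration $(\int_0^T\cdot\,ds)^{1/2}$ supplies an extra $\sqrt{T}$ beyond the pointwise bound; this $\sqrt{T}$ is absorbed into the $T$-uniform constants of (\ref{delX-est}) and (\ref{U-est}) (recall $T\le1$) and must be reinstated by hand. Combining it with the sharp $L^p$-orders $\|\PS^l\XC_T\|_{L^p}=O(\sqrt{T})$ and $\|\PT^l\XC_T\|_{L^p}=O(T)$, and with the Malliavin-derivative orders in (\ref{delX-est}), the first-order integrand $u=(U^{\alpha,T})^\top\PA\XC_T$ satisfies $\|u\|_{1,p}=O(T)$ for $\alpha=\sigma$ and $\|u\|_{1,p}=O(T^{3/2})$ for $\alpha=\theta$; after the prefactor $T^{-1}$ this gives $O(1)$ and $O(T^{1/2})$, matching (\ref{pc-est-lemma-eq2}) and (\ref{pc-est-lemma-eq1}). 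The second-order and mixed terms run the same way: the iterated divergence with its $T^{-2}$ prefactor, together with one further $\sqrt{T}$ per $\theta$-derivative and per $H$-integration, produces an order no larger than $T^0$ for $\PS^2$ and $T^{1/2}$ for $\PT^2$ and $\PS\PT$, which establishes (\ref{pc-est-lemma-eq1})--(\ref{pc-est-lemma-eq3}). Everything outside this $\sqrt{T}$-bookkeeping is a routine application of Meyer's inequalities and the bounds from (C1)--(C2).
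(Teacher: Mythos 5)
Your proposal is correct in its overall architecture and it reaches the same estimates, but it handles the key technical step differently from the paper. The shared skeleton is: represent $\PA^l p^{c,T}_{x,\alpha}/p^{c,T}_{x,\alpha}$ via Proposition~\ref{gob-prop2}, use conditional Jensen and the tower property to reduce to $E_x[|\Phi|^p]$ for the Skorokhod integrals $\Phi$, and then track powers of $T$ (your explicit Jensen step is only implicit in the paper). Where you diverge is in how the divergence moments are estimated. You apply Meyer's inequalities $\lVert\delta(u)\rVert_{k-1,p}\le C_p\lVert u\rVert_{k,p}$ directly to the full product integrands $(U^{\alpha,T})^\top\PA X_T^{\alpha,c}$, etc. The paper instead (i) peels off the $\mcf_T$-measurable factors $\PA[X_T^{\alpha,c}]_i$ with the factorization $\delta(Fu)=F\delta(u)-\int_0^TD_uF\cdot u\,du$ (point (iii) of Proposition 3.2 in \cite{gob01}), and (ii) obtains the crucial $E[|\delta(U_i^{\alpha,T})|^p]\le C_pT^{p/2}(1+|x|)^{C_p}$ via the Clark--Ocone formula plus Burkholder, which produces the factor $T^{p/2-1}\int_0^T(\cdot)\,du$ explicitly. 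The paper's route has the advantage that every power of $T$ comes from a visible $\int_0^T$ against integrands controlled by the plain $L^p$ and $E[\sup_r|D_r\cdot|^p]$ bounds that (\ref{delX-est}) and (\ref{U-est}) actually supply. Your route is cleaner to state but, for the iterated divergence in the second-order identity, it pushes the $T$-dependence into norms such as $\lVert(U^{\alpha,T})^\top\PS X_T\PS[X_T]_i\rVert_{2,p}$, and here the cited bounds are $T$-uniform only ($\lVert\PS X_T\rVert_{2,p}\le C_p(1+|x|)^{C_p}$, $\lVert\mcu^\alpha\rVert_{2,p}\le C_p(1+|x|)^{C_p}$): with those as stated, the $\PS^2$ case of your bookkeeping yields only $O(T^{3/2})$ for the iterated term and hence $O(T^{-1/2})$ after the $T^{-2}$ prefactor. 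To close this you must actually prove the $T$-sharp refinements $\lVert D^k\PS X_T\rVert_{H^{\otimes k}}=O(T^{k/2})$ and similarly for $\mcu^\alpha$ (true, but requiring a return to the proofs behind (\ref{delX-est})--(\ref{U-est})), rather than merely "reinstating one $\sqrt{T}$ per $H$-integration" of the top-level integrand; this is the one place where your proposal understates the work, and it is precisely what the paper's factorization is designed to avoid.
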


\begin{proof}
Point (iii) of Proposition 3.2 in Gobet~\cite{gob01} yields
\EQ{\label{pc-est-lemma-eq4} \delta(\PA [X_T^{\alpha,c}]_iU_i^{\alpha,T})=\PA [X_T^{\alpha,c}]_i\delta(U_i^{\alpha,T})
-\int_0^T D_u\PA [X_T^{\alpha,c}]_i\cdot U_{i,u}^{\alpha,T}du.}

Moreover, the Clark--Ocone formula (Proposition 3.3 in~\cite{gob01}) yields
\EQN{\label{pc-est-lemma-eq5} E[|\delta(U_i^{\alpha,T})|^p]
&=E\bigg[\bigg|\int^T_0E[D_u\delta(U_i^{\alpha,T})|\mcf_u]\cdot dW_u\bigg|^p\bigg] \\
&\leq C_pT^{p/2-1}\int^T_0E[|D_u\delta(U_i^{\alpha,T})|^p]du.}

Points (iv) and (v) of Proposition 3.2 in~\cite{gob01} yield
\EQ{\label{pc-est-lemma-eq6} D_u\delta(U_i^{\alpha,T})=U_i^{\alpha,T}+\delta(D_uU_i^{\alpha,T}),}
and
\EQN{\label{pc-est-lemma-eq7} E[|\delta (D_uU_i^{\alpha,T})|^p]
\leq {\colorp C_p}E\bigg[\int^T_0 |D_uU_{i,v}^{\alpha,T}|^pdv\bigg]
+{\colorp C_p}E\bigg[\int_0^T\int_0^T|D_{v_1}D_uU_{i,v_2}^{\alpha,T}|^pdv_1dv_2\bigg]
\leq C_p(1+|x|)^{C_p}.}

(\ref{delX-est}), (\ref{U-est}), and (\ref{pc-est-lemma-eq5})--(\ref{pc-est-lemma-eq7}) yield
\EQ{\label{pc-est-lemma-eq8} E[|\delta(U_i^{\alpha,T})|^p]\leq C_pT^{p/2}(1+|x|)^{C_p}.}
Then (\ref{delX-est}), (\ref{U-est}), (\ref{pc-est-lemma-eq4}) and (\ref{pc-est-lemma-eq8}) yield
\EQQ{E[|\delta(\PT [\XC_T]_iU_i^{\alpha,T})|^p]\leq C_pT^{3p/2}(1+|x|)^{C_p},
\quad E[|\delta(\PS [\XC_T]_iU_i^{\alpha,T})|^p]\leq C_pT^p(1+|x|)^{C_p}.}
Together with Proposition~\ref{gob-prop2}, we have (\ref{pc-est-lemma-eq1}) and (\ref{pc-est-lemma-eq2}) with $l=1$.

For the estimates for $\PA^2 {\colorp p_{x,\alpha}^{c,T}}$, we first obtain
\EQN{\label{pc-est-prop-eq1} &\delta(U_i^{\alpha,T}\delta((U^{\alpha,T})^\top\partial_\alpha X_T^{\alpha,c}\partial_\alpha [X_T^{\alpha,c}]_i)) \\
&\quad =\delta(U^{\alpha,T})\delta((U^{\alpha,T})^\top\partial_\alpha X_T^{\alpha,c}\partial_\alpha [X_T^{\alpha,c}]_i)
-\int^T_0D_t\delta((U^{\alpha,T})^\top\partial_\alpha X_T^{\alpha,c}\partial_\alpha [X_T^{\alpha,c}]_i))\cdot U_{i,t}^{\alpha,T}dt \\
&\quad =\delta(U^{\alpha,T})\left\{\sum_k\delta( U^{\alpha,T}_k)\partial_\alpha [X_T^{\alpha,c}]_k\partial_\alpha [X_T^{\alpha,c}]_i-\sum_k\int_0^T U^{\alpha,T}_{k,t}\cdot D_t(\partial_\alpha [X_T^{\alpha,c}]_k\partial_\alpha [X_T^{\alpha,c}]_i)dt\right\} \\
&\quad \quad -\int^T_0D_t\delta((U^{\alpha,T})^\top\partial_\alpha X_T^{\alpha,c}\partial_\alpha [X_T^{\alpha,c}]_i))\cdot U_{i,t}^{\alpha,T}dt.
}

The $L^p$ norm of the first term in the right-hand side is bounded by $C_pT^{5/2}(1+|x|)^{C_p}$ for $\PT^2$ and $\PT\PS$, and by $C_pT^2(1+|x|)^{C_p}$ for $\PS^2$ because of (\ref{delX-est}), (\ref{U-est}), and (\ref{pc-est-lemma-eq8}).
For the second term in the right-hand side of (\ref{pc-est-prop-eq1}), we have
\EQNN{
&-\int^T_0D_t\delta((U^{\alpha,T})^\top\partial_\alpha X_T^{\alpha,c}\partial_\alpha [X_T^{\alpha,c}]_i))\cdot U_{i,t}^{\alpha,T}dt \\
&\quad =-\int^T_0D_t\bigg\{\sum_k\bigg(\delta(U^{\alpha,T}_k)\PA [X_T^{\alpha,c}]_k\PA [X_T^{\alpha,c}]_i-\int^T_0D_s(\PA [X_T^{\alpha,c}]_k\PA [X_T^{\alpha,c}]_i)\cdot U_{k,s}^{\alpha,T}ds\bigg)\bigg\}\cdot U_{i,t}^{\alpha,T}dt \\
&\quad =-\sum_k\int^T_0\bigg\{(U^{\alpha,T}_{k,t}+\delta(D_tU^{\alpha,T}_k))\PA [X_T^{\alpha,c}]_k\PA [X_T^{\alpha,c}]_i
+\delta(U_k^{\alpha,T})D_t(\PA [X_T^{\alpha,c}]_k\PA [X_T^{\alpha,c}]_i) \\
&\quad \quad\quad \quad\quad \quad -\int^T_0D_t\left(D_s(\PA [X_T^{\alpha,c}]_k\PA [X_T^{\alpha,c}]_i)\cdot U_{k,s}^{\alpha,T}\right)ds\bigg\}\cdot U_{i,t}^{\alpha,T}dt.
}
Together with Proposition~\ref{gob-prop2}, (\ref{delX-est}), (\ref{U-est}), and (\ref{pc-est-lemma-eq8}), we have (\ref{pc-est-lemma-eq1})--(\ref{pc-est-lemma-eq3}) with $l=2$.

\end{proof}

\begin{remark}\label{third-derivative-rem}
Similarly to the proofs of Propositions~\ref{gob-prop2} and~\ref{pc-est}, we can show
\EQQ{\sup_{\alpha\in \Theta}\int \bigg|\frac{\PT^l\PS^kp_{x,\alpha}^{c,T}}{p_{x,\alpha}^{c,T}}\bigg|^pp_{x,\alpha}^{c,T}(y)dy\leq C_{p,n}(1+|x|)^{C_{p,n}}}
for $n\in\mbbn$, $p\geq 2$, $T\in (0,1]$, $x\in \mbbr^m$, and $l+k\leq 3$,
where $C_{p,n}$ is a positive constant depending on $p$ and $n$.
This result will be used in Sections~\ref{B1B2-subsection} and~\ref{B3B4-subsection}.
\end{remark}

The following estimates for transition density functions of diffusion processes are Proposition 1.2 in Gobet~\cite{gob02}.
\begin{proposition}\label{gob-prop}
Assume (C1) and (C2). Then there exist constants $c>1$ and $K>1$ such that
\begin{eqnarray}
p_{x,\alpha}^{c,t}(y)\leq Kt^{-m/2}\exp(-c^{-1}t^{-1}|x-y|^2+ct|x|^2), \\
p_{x,\alpha}^{c,t}(y)\geq K^{-1}t^{-m/2}\exp(-ct^{-1}|x-y|^2-ct|x|^2)
\end{eqnarray}
for $0<t\leq 1$, $x,y\in \mbbr^m$, and $\alpha\in\Theta$.
\end{proposition}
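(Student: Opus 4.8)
The plan is to obtain the two-sided Gaussian bounds from the classical Aronson estimates for a uniformly elliptic operator, and then to reinstate the linear-growth drift $a$ through a Girsanov transformation; the latter is precisely what produces the correction factor $\exp(\pm c t|x|^2)$ that is absent in the bounded-coefficient case.

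First I would introduce the auxiliary driftless diffusion $Y_{t,x}$ solving $dY_t=b(Y_t,\sigma)\,dW_t$ with $Y_0=x$, and denote its transition density by $q_{x,\sigma}^{c,t}(y)$. Under (C1) and (C2) the matrix $S=b^2$ is bounded, Lipschitz in $x$, and uniformly elliptic, with $C_2^{-2}I_m\le S\le C_2^2 I_m$ and all constants depending only on $C_1,C_2,m$. The classical Aronson estimates (for instance via the parametrix construction of Friedman, or Aronson's theorem for operators in nondivergence form) then furnish $c_0>1$ and $K_0>1$, independent of $\sigma$, with
\[
K_0^{-1}t^{-m/2}e^{-c_0 t^{-1}|x-y|^2}\le q_{x,\sigma}^{c,t}(y)\le K_0 t^{-m/2}e^{-c_0^{-1}t^{-1}|x-y|^2}
\]
for $0<t\le1$. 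Because these constants never see the drift, they are uniform in $\alpha\in\Theta$.

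Next I would reinstate the drift. Setting $\beta(x,\alpha)=b^{-1}(x,\sigma)a(x,\theta)$, conditions (C1)--(C2) give $|\beta(x,\alpha)|\le C(1+|x|)$ uniformly in $\alpha$, so by Girsanov's theorem the law of $X^{\alpha,c}$ on $C([0,t];\mbbr^m)$ is equivalent to that of $Y$ with density $\mathcal{E}_t=\exp\bigl(\int_0^t\beta(Y_s)^\top dW_s-\tfrac12\int_0^t|\beta(Y_s)|^2\,ds\bigr)$. Testing against indicator functions and disintegrating along the endpoint yields the exact relation $p_{x,\alpha}^{c,t}(y)=q_{x,\sigma}^{c,t}(y)\,\E[\mathcal{E}_t\mid Y_t=y]$, the conditional expectation being taken along the $Y$-bridge from $x$ to $y$. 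It then remains to prove $K_1^{-1}e^{-ct|x|^2}\le \E[\mathcal{E}_t\mid Y_t=y]\le K_1 e^{ct|x|^2}$ uniformly in $\alpha$, after which multiplying by the Aronson bounds for $q$ and enlarging $c$ and $K$ gives the claim. For these bounds I would use the structure of the bridge on $[0,t]$: the compensator contributes $\tfrac12\int_0^t|\beta(Y_s)|^2\,ds\le Ct(1+\sup_{s\le t}|Y_s|^2)$, whose conditional moments are controlled because, for $t\le1$, the bridge stays within distance $O(1+|x-y|)$ of $x$ with high conditional probability; the stochastic-integral part is conditionally centered and is handled by Cauchy--Schwarz together with conditional exponential-moment bounds. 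The lower bound follows from Jensen's inequality, $\E[\mathcal{E}_t\mid Y_t=y]\ge\exp(\E[\log\mathcal{E}_t\mid Y_t=y])$, which reduces matters to a lower estimate of the conditional mean of $\int_0^t|\beta(Y_s)|^2\,ds$, again of order $t(1+|x|^2)$.

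The main obstacle is precisely this control of $\E[\mathcal{E}_t\mid Y_t=y]$: conditioning on the endpoint destroys the martingale property of $\mathcal{E}$, so it cannot be integrated directly, and one must instead exploit quantitative estimates for the diffusion bridge (its marginals and conditional variance on $[0,t]$, $t\le1$). An alternative that sidesteps the conditioning is a direct parametrix expansion for $p_{x,\alpha}^{c,t}$ in which the frozen-coefficient Gaussian kernel already carries the $e^{ct|x|^2}$ factor induced by the linear growth of $a$ and the iterated convolution kernels are shown to converge under (C1); this is essentially the route of Gobet~\cite{gob02}. In either case uniformity in $\alpha\in\Theta$ is automatic, since every estimate depends on the coefficients only through the constants $C_1$ and $C_2$ of (C1)--(C2).
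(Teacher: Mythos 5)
The paper offers no proof of this proposition at all: it is quoted as Proposition~1.2 of Gobet~\cite{gob02}, so there is no in-paper argument to compare yours against. Your architecture --- Aronson bounds for the driftless diffusion $Y$ with generator determined by $S=b^2$, followed by a Girsanov change of measure to reinstate the linearly growing drift, the factor $e^{\pm ct|x|^2}$ being produced by the Girsanov density --- is the standard and appropriate route, and the factorization $p_{x,\alpha}^{c,t}(y)=q_{x,\sigma}^{c,t}(y)\,\E[\mathcal{E}_t\mid Y_t=y]$ is legitimate (with $\beta=b^{-1}a$ of linear growth and $t\le 1$, the exponential local martingale is a true martingale, e.g.\ by Bene\v{s}' criterion).

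The gap is in the step you yourself flag as the main obstacle: the two-sided bound $K_1^{-1}e^{-ct|x|^2}\le\E[\mathcal{E}_t\mid Y_t=y]\le K_1e^{ct|x|^2}$ is never actually established, and the outline you give for it contains an error. The stochastic integral $\int_0^t\beta(Y_s)^{\top}dW_s$ is \emph{not} conditionally centered given $Y_t=y$: already for $Y=W$ and constant $\beta$ one has $\E[\int_0^t\beta^{\top}dW_s\mid W_t=y]=\beta^{\top}(y-x)$, and in general this term is of size $(1+|x|)\,|y-x|$, which is neither $O(t|x|^2)$ nor bounded. The proof cannot close without absorbing this cross term via $(1+|x|)|y-x|\le\varepsilon t^{-1}|y-x|^2+C_{\varepsilon}t(1+|x|)^2$, partly into the Gaussian exponent and partly into the $e^{ct|x|^2}$ factor --- this absorption is exactly why the upper bound carries $\exp(-c^{-1}t^{-1}|x-y|^2)$ while the lower bound carries $\exp(-ct^{-1}|x-y|^2)$ with $c>1$. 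The same issue affects the compensator, whose conditional size is of order $t(1+|x|^2+|y-x|^2)$ rather than $t(1+|x|^2)$, and affects your Jensen lower bound, where the conditional mean of the stochastic integral (not just of $\int_0^t|\beta(Y_s)|^2ds$) must be bounded from below. Finally, the Cauchy--Schwarz upper bound requires conditional exponential moments of $t\sup_{s\le t}|Y_s|^2$ under the bridge, which you assert but do not prove. In short, the skeleton is right, but the quantitative bridge estimates that constitute the actual content of the proposition are missing; for the purposes of this paper the honest course is to do what the authors do and invoke Gobet's Proposition~1.2 directly, after checking that (C1)--(C2) supply the hypotheses his proof uses.
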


\subsection{Verifying Conditions (B1) and (B2)}\label{B1B2-subsection}

From now on, we show the LAN property of jump-diffusion processes by applying Corollary~\ref{B4'-cor}.
In this section, we first introduce our approximating likelihood function, and we check the conditions (B1) and (B2) for the function.

By dividing events, $p_j(x_{j-1},x_j,\al)$, the density function of $P(X_{t_j}^\alpha\in\cdot|\XJ^\alpha=x_{j-1})$ can be written as
\begin{eqnarray}
p_j(x_{j-1},x_j,\al)&=&p^0_j(x_{j-1},x_j,\alpha)+p^1_j(x_{j-1},x_j,\alpha)+\sum_{l=2}^\infty  p^2_{l,j}(x_{j-1},x_j,\alpha),
\end{eqnarray}
where $t_j=jh_n$,
\begin{eqnarray}
p^0_j(x_{j-1},x_j,\alpha)&=&e^{-\lambda h_n}p^{c,t_j-t_{j-1}}_{x_{j-1},\alpha}{\colorp (x_j)},\nonumber\\
p^1_j(x_{j-1},x_j,\alpha)&=&\lambda e^{-\lambda h_n}\int^{t_j}_{t_{j-1}}\int\int p^{c,\tau-t_{j-1}}_{x_{j-1},\alpha}(x)F_\theta(y)p_{x+y,\alpha}^{c,t_j-\tau}(x_j)dxdy d\tau, \nonumber \\
p^2_{l,j}(x_{j-1},x_j,\alpha)&=&\frac{\lambda^le^{-\lambda h_n}}{l!}\int^{t_j}_{t_{j-1}}\cdots \int^{t_j}_{t_{j-1}}\int\cdots \int p_{x_{j-1},\alpha}^{c,\tilde{\tau}_1-t_{j-1}}(z_1)F_\theta(z_2)\cdots p_{z_{2l-1}+z_{2l},\alpha}^{c,t_j-\tilde{\tau}_l}(x_j)\bigg(\prod_{j=1}^{2l}dz_j\bigg)\bigg(\prod_{k=1}^ld\tau_k\bigg). \nonumber
\end{eqnarray}
Here $\tilde{\tau}_1,\cdots,\tilde{\tau}_l$ are the sort of $\tau_1,\cdots, \tau_l$ in ascending order.
We can ignore the density function $p_0(x_0,\alpha)$ of $P(X_0^\alpha\in \cdot )$ {\colorp when we apply Corollary~\ref{B4'-cor}} because the distribution of $X_0^\alpha$ does not depend on $\alpha$ by the assumption.
Let $\epsilon_n={\rm diag}(n^{-1/2}I_{d_1},(nh_n)^{-1/2}I_{d_2}))$ and $T_n=nh_n$.

From (C4), there exist $\rho\in (1/4,1/2)$ and $\eta'>0$ such that $n^{1+\eta'}h_n^{1+(m+\gamma)\rho}\to 0$.
We write $L_n=\{{\colorp x\in \mbbr^m}||x|\leq h_n^\rho\}$.
For $\rho\in (0,1/2)$, Shimizu and Yoshida~\cite{shi-yos06} constructed a thresholding quasi-likelihood function based on the jump detection rule: $|X_{t_j}-\XJ|> h_n^\rho$ or not.
Here we follow their way.
More specifically, we approximate the genuine density $p_j$ by the thresholding quasi-likelihood function:
\begin{equation}
{\colorp \tilde{p}_j(\alpha)=\tilde{p}_j(\alpha,x_j,x_{j-1})=p^0_j(x_{j-1},x_j,\alpha)1_{L_n}(\Delta x_j)+p^1_j(x_{j-1},x_j,\alpha)1_{L_n^c}(\Delta x_j),}
\end{equation}
{\colorp where $\Delta x_j=x_j-x_{j-1}$,} and we apply Corollary~\ref{B4'-cor} to this function.
In this setting, $d_j=\int \tilde{p}_jdy\leq 1$ and Proposition~\ref{gob-prop} ensures $d_j>0$
{\colorp under (C1) and (C2).}
%{\colorp We often write $\tilde{p}_j(\alpha)$ when $x$ and $y$ are clear.}

First we observe (B1).
For a constant $\delta\in(0,1/4)$,
we define
\EQQ{{\colorp K_{n,j}=\left\{(x_l)_{l=0}^n\subset \mbbr^{m(n+1)}\middle|\max_{0\leq l\leq j}|x_l|\leq n^\delta\right\} 
\quad {\rm and} \quad K''_{n,j}=\{x_j\in\mbbr^m||x_j|\leq n^\delta\}.
}}
%K'_{n,j}=\left\{(x_l)_{l=0}^n\subset \mbbr^{m(j+1)};\max_{0\leq l\leq j}|x_l|\leq n^\delta\right\},
For this set, \eqref{tail-prob-est1} follows from the following lemma.

\begin{lemma}\label{yu:maxesti}
Assume (C3). Then for any $\epsilon,\delta>0$, there exists a positive integer $N$ such that
\begin{equation*}
P\bigg[{\colorp \max_{0\leq k\leq n}}|{\colorp X_{t_k}^\alpha}|>n^\delta\bigg]<\epsilon
\end{equation*}
for all $n\geq N$ {\colorp and $\alpha\in\Theta$}.
\end{lemma}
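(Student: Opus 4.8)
The plan is to reduce the maximum to a union bound over the $n+1$ observation times and then control each term by Markov's inequality at a sufficiently high moment, exploiting the uniform-in-$(\alpha,t)$ moment bound \eqref{moment-cond} supplied by (C3). The whole point of stating \eqref{moment-cond} as a supremum over $\alpha\in\Theta$, rather than merely at $\alpha=\alpha_0$, is precisely to obtain an estimate that is uniform in $\alpha$, which is what the conclusion of the lemma demands.

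Concretely, first I would write, by subadditivity,
\[
P\Big[\max_{0\le k\le n}|X_{t_k}^\alpha|>n^\delta\Big]\le \sum_{k=0}^{n} P\big[|X_{t_k}^\alpha|>n^\delta\big].
\]
Next, for an arbitrary fixed exponent $q>0$, Markov's inequality gives $P[|X_{t_k}^\alpha|>n^\delta]\le n^{-\delta q}E[|X_{t_k}^\alpha|^q]$, and \eqref{moment-cond} furnishes a finite constant $C_q=\sup_{\alpha\in\Theta,\,t\ge 0}E[|X_t^\alpha|^q]$ that is independent of both the index $k$ and the parameter $\alpha$. Combining these yields
\[
P\Big[\max_{0\le k\le n}|X_{t_k}^\alpha|>n^\delta\Big]\le (n+1)\,C_q\,n^{-\delta q}
\]
for every $n\in\mbbn$ and every $\alpha\in\Theta$.

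Finally, I would choose the moment order $q$ large enough that $\delta q>1$, for instance $q=2/\delta$; this choice is legitimate because \eqref{moment-cond} holds for every $q>0$. For such $q$ the right-hand side $(n+1)C_q n^{-\delta q}$ is of order $n^{1-\delta q}\to 0$ as $n\to\infty$, and, crucially, this decay is uniform in $\alpha$ since $C_q$ carries no dependence on $\alpha$. Hence, given $\epsilon>0$, there is an $N$ with $(n+1)C_q n^{-\delta q}<\epsilon$ for all $n\ge N$, and the bound holds simultaneously for every $\alpha\in\Theta$, which is exactly the claim. I do not expect any genuine obstacle here: the estimate is routine once arbitrarily high moments are available, and the only feature requiring attention is the uniformity in $\alpha$, which is guaranteed by the form of the moment condition in (C3).
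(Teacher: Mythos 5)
Your proposal is correct and follows essentially the same route as the paper: the paper applies Chebyshev's inequality directly to $\max_k|X_{t_k}^\alpha|$ and then bounds $E[\max_k|X_{t_k}^\alpha|^q]$ by the sum of the individual moments, which is just a reshuffling of your union bound plus Markov at each time point; both arguments reduce to the bound $(n+1)\,C_q\,n^{-\delta q}$ with $q\delta>1$ and rely on the uniformity in $\alpha$ of \eqref{moment-cond}. No gap.
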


\begin{proof}
Pick a positive constant $q$ fulfilling $q\delta >1$.
Then Chebyshev's inequality gives
\begin{equation*}
P\bigg[\max_k|X_{t_k}^\alpha|>n^\delta\bigg]\leq \frac{1}{n^{q\delta}}E\left[\max_k|X_{t_k}^\alpha|^q\right]\leq n^{1-q\delta}\sup_{t,\alpha}E[|X_t^\alpha|^q]\to 0
\end{equation*}
as $n\to \infty$.
\end{proof}

{\colorp Under (C1)--(C4),} Proposition~\ref{gob-prop} implies that
\EQNN{\int |p_j-\tilde{p}_j|dx_j
&=\int p_j^01_{L_n^c}({\colorp \Delta x_j})dx_j+P({\colorp N_\theta}((t_{j-1},t_j]\times E)\geq 2)+\int p^1_j1_{L_n}(\Delta x_j)dx_j \\
&\leq Ch_n^2+1-e^{-\lambda h_n}(1+\lambda h_n) \\
&\quad +P(N_\theta((t_{j-1},t_j]\times E)=1 \ {\rm and} \ |{\colorp X_{t_j}^\alpha-\XJ^\alpha}|\leq h_n^\rho {\colorp |X_{t_{j-1}}^\alpha=x_{j-1}})
}
for {\colorp any $x_{j-1}\in K''_{n,j-1}$} and $\alpha\in \Theta$, where {\colorp $C$} does not depend on ${\colorp x_{j-1}}$.
By applying the triangular inequality, we have
\begin{equation*}
|X_{t_j}^\alpha-\XJ^\alpha|\geq |X_{\tau_j}^\alpha-X_{\tau_j-}^\alpha|-|X_{\tau_j-}^\alpha-\XJ^\alpha|-|X_{t_j}^\alpha-X_{\tau_j}^\alpha|,
\end{equation*}
where $\tau_j$ denote the first jump time on $(t_{j-1},t_j]$.
Hence, by using (C4), we obtain
\EQN{\label{p1-diff-est} & P(N_\theta((t_{j-1},t_j]\times E)=1 \ {\rm and} \ |X_{t_j}^\alpha-\XJ^\alpha|\leq h_n^\rho {\colorp |X_{t_{j-1}}^\alpha=x_{j-1}})\\
&\quad\leq P(N_\theta((t_{j-1},t_j]\times E)=1 \ {\rm and} \ |X_{\tau_j-}^\alpha-\XJ^\alpha|+|X_{t_j}^\alpha-X_{\tau_j}^\alpha|>h_n^\rho {\colorp |X_{t_{j-1}}^\alpha=x_{j-1}})\\
&\quad+P(N_\theta((t_{j-1},t_j]\times E)=1 \ {\rm and} \ |X_{\tau_j}^\alpha-X_{\tau_j-}^\alpha|\leq 2h_n^\rho {\colorp |X_{t_{j-1}}^\alpha=x_{j-1}}) \\
&\quad \leq Ch_n^2+\lambda h_n e^{-\lambda h_n}\int_{|z|\leq 2h_n^\rho}F_\theta(z)dz \\
&\quad \leq Ch_n^2+Ch_n^{1+(m+\gamma)\rho}=o(n^{-1})}
{\colorp for $x_{j-1}\in K''_{n,j-1}$, which leads to \eqref{p-diff-est}, and hence} Lemma~\ref{B1-suff-lemma} yields \eqref{tail-prob-est2}.
Thus (B1) holds.

We next check (B2).
For each $l\in\{0,1,2,3\}$, we have
\EQQ{\sup_t \int |\partial_t^l\tilde{p}_j(\alpha_{th})|dx_j
\leq \sup_t \int \{|\PTT^l p_{j,t}^0|1_{L_n}(\Delta x_j)+|\PTT^l p_{j,t}^1|1_{L_n^c}(\Delta x_j)\}dx_j,
}
where $p_{j,t}^l=p_j^l(x_{j-1},x_j,\alpha_{th})$.
Proposition~\ref{pc-est} and Remark~\ref{third-derivative-rem} lead to
\EQQ{\sup_t\int |\PTT^l p_{j,t}^0|dx_j<\infty.}
It follows from Proposition~\ref{pc-est} and (C4) that
{\colorp
\EQN{&\label{p1-int-est} \int |\PTT p_{j,t}^1|dx_j\\
&\quad \leq \int \int_{t_{j-1}}^{t_j}\int \int \bigg|\frac{\PTT p_{x_{j-1},\alpha_{th}}^{c,\tau-t_{j-1}}}{p_{x_{j-1},\alpha_{th}}^{c,\tau-t_{j-1}}}(x)+\frac{\PTT f_{\theta_{th}}}{f_{\theta_{th}}} 1_{\{f_{\theta_{th}}\neq 0\}}(y)+\frac{\PTT p_{x+y,\alpha_{th}}^{c,t_j-\tau}}{p_{x+y,\alpha_{th}}^{c,t_j-\tau}}(x_j)\bigg|p_{x_{j-1},\alpha_{th}}^{c,\tau-t_{j-1}}f_{\theta_{th}}p_{x+y,\alpha_{th}}^{c,t_j-\tau}dxdyd\tau dx_j \\
&\qquad +\frac{Ch_n^2}{\sqrt{T_n}} \\
&\quad\leq Cn^{-1/2}h_n(1+|x_{j-1}|)^C+\frac{Ch_n}{\sqrt{T_n}}\int (1+|y|)^Cf_{\theta_{th}}(y)dy \\
&\qquad +Cn^{-1/2}\int_{t_{j-1}}^{t_j}\int \int (1+|x+y|)^Cp_{x_{j-1},\alpha_{th}}^{c,\tau-t_{j-1}}f_{\theta_{th}}dxdyd\tau+\frac{Ch_n^2}{\sqrt{T_n}} \\
&\quad<\infty,}
where $(\sigma_{th},\theta_{th})=\alpha_{th}$.}
In a similar manner, we can obtain $\sup_t\int |\PTT^l p_{j,t}^1|dx_j<\infty$ for $l\in \{0,1,2,3\}$ {\colorp by Remark~\ref{third-derivative-rem} and (C4)}, and thus \eqref{zeta-integrable} holds.
$D_{j,h}$ can be decomposed as
\EQN{
D_{j,h}= 1-e^{-\lambda_{th}h_n}\left[1+\lambda_{th} h_n\right]+\int(\tilde{p}_j(\alpha_{th})-p_{j,t}^0-p_{j,t}^1)dx_j,}
where {\colorp $\lambda_{th}=\lambda(\theta_{th})$}.
Since
\EQQ{
{\colorp \left|\PTT^l (e^{-\lambda_{th} h_n}\left[1+\lambda_{th} h_n\right])\right|=\bigg|\PTT^{l-1}\bigg(\lambda_{th} h_n^2 e^{-\lambda_{th} h_n}\frac{\PT \lambda(\theta_{th})\cdot h}{\sqrt{T_n}}\bigg)\bigg|=O(h_n^2)}
}
for $l\in\{1,2\}$, H${\rm \ddot{o}}$lder's inequality gives
\EQN{\label{PT-Dj-est} |\PTT^lD_{j,h}|&\leq \left|\PTT^l(e^{-\lambda_{th} h_n}\left[1+\lambda_{th} h_n\right])\right|
+\int |\PTT^l(\tilde{p}_j(\alpha_{th})-p_{j,t}^0-p_{j,t}^1)|dx_j \\
&\quad \leq Ch_n^2+\int \bigg|\DELP{0}{t^l}\bigg|p_{j,t}^01_{L_n^c}dx_j+\int \bigg|\DELP{1}{t^l}\bigg|p_{j,t}^11_{L_n}dx_j \\
&\quad \leq Ch_n^2+\left(\int \bigg|\DELP{0}{t^l}\bigg|^pp_{j,t}^0dx_j\right)^{1/p}\left(\int 1_{L_n^c}p_{j,t}^0dx_j\right)^{1/q} \\
&\quad \quad +\left(\int \bigg|\DELP{1}{t^l}\bigg|^pp_{j,t}^1dx_j\right)^{1/p}\left(\int 1_{L_n}p_{j,t}^1dx_j\right)^{1/q}
}
{\colorp for $x_{j-1}\in K''_{n,j-1}$,} where $p,q>1$ with $1/p+1/q=1$.

{\colorp Proposition~\ref{pc-est}, Jensen's inequality, and a similar argument to (\ref{p1-int-est}) yield}
\EQQ{{\colorp \bigg(\int \bigg|\DELP{k}{t^l}\bigg|^pp_{j,t}^kdx_j\bigg)^{1/p}\leq C_p(1+|x_{j-1}|)^{C_p}\cdot \frac{1}{\sqrt{nh_n^k}}}}
for $k\in \{0,1\}$.
Then as in (\ref{p1-diff-est}), we have
\EQQ{|\PTT^l D_{j,h}|\leq {\colorp o(n^{-1})+Cn^{\delta C_p}(nh_n)^{-1/2}h_n^{(1+(m+\gamma)\rho)/q}
\leq o(n^{-1})+Cn^\epsilon (nh_n)^{-1/2}h_n^{1+(m+\gamma)\rho}}=o(n^{-1})}
{\colorp for $x_{j-1}\in K''_{n,j-1}$ and $q$ satisfying $(1+(m+\gamma)\rho)/q>1+(m+\gamma)\rho-\epsilon/2$,} where $\epsilon$ is a positive constant satisfying $n^{\epsilon+1+(m+\gamma)\rho}=o(1)$
($\delta$ in $K_{n,j}$ should be reset to satisfy $\delta C_p<{\colorp \epsilon/2}$
for $C_p$ and $\epsilon$).
Therefore, we have (\ref{D-del-est}), and hence (B2) holds true.

\subsection{Verifying Conditions (B3) and (B4$'$)}\label{B3B4-subsection}

In this subsection, we look at Conditions (B3) and (B4$'$).
Let $\tilde{f}_t(y)=h_ne^{-\lambda_{th} h_n}f_{\theta_{th}}(y)$.
{\colorp Then} we have
\EQQ{{\colorp p_{j,t}^1=h_n^{-1}\INTTJ \int\int \PCB \tilde{f}_t(y)\PCA dxdyd\tau.}}
{\colorp By Proposition~\ref{gob-prop}, we can see that $p_{j,t}^1>0$ and hence $\tilde{p}_j(\alpha_{th})>0$ for any $x_{j-1},x_j\in \mbbr^m$ and $t\in [0,1]$. Therefore we have}
\EQQ{{\colorp \frac{\PT^l \tilde{p}_j}{\tilde{p}_j}(\alpha_{th})= \DELP{0}{\theta^l}1_{L_n}(\Delta x_j)+\DELP{1}{\theta^l}1_{L_n^c}(\Delta x_j)}.}

For notational simplicity, we write
\EQQ{{\colorp \Phi_j^t(\varphi(x_{j-1},x,y,x_j,\tau))=h_n^{-1}\INTTJ \int\int \varphi(x_{j-1},x,y,x_j,\tau)\PCB \tilde{f}_t(y)\PCA dxdyd\tau}}
for an integrable function $\varphi$.
{\colorp Then we can write 
\EQ{\label{del-p1-eq} \DELP{1}{\theta}=\frac{1}{p_{j,t}^1}\Phi_j^t\bigg(\DELPY{\PT}(y)+\frac{\partial_\theta \PCB}{\PCB}+\frac{\partial_\theta \PCA}{\PCA}\bigg),}
and
\EQN{\label{del2-p1-eq} \DELP{1}{\theta^2}&=\frac{1}{p_{j,t}^1}\Phi_j^t\bigg(\DELPY{\PT^2}(y)+\frac{\PT^2\PCB}{\PCB}+\frac{\PT^2\PCA}{\PCA}+2\frac{\PT \PCB}{\PCB}\DELPY{\PT}(y) \\
&\quad \quad \quad \quad \quad+2\frac{\PT\PCA}{\PCA}\DELPY{\PT}(y)+2\frac{\PT \PCA}{\PCA}\frac{\PT \PCB}{\PCB}\bigg).}
We consider the limit of each term in the right-hand side. 
The following lemma implies that terms related to the derivatives of continuous part are asymptotically negligible.
}

\begin{lemma}\label{del-PCB-conv}
Assume {\colorp (C1)--(C4)}. Then there exists $n_0\in\mbbn$ such that
\EQNN{
\int\bigg|\frac{1}{p_{j,t}^1}\Phi_j^t\bigg(\frac{\PT^l \PCB}{\PCB}\bigg)
1_{L_n^c}\bigg|^{4/l}{\colorp \tilde{p}_j(\alpha_{th})dx_j}&\leq Ch_n^{4-l}(1+|x_{j-1}|)^C, \\
\int\bigg|\frac{1}{p_{j,t}^1}\Phi_j^t\bigg(\frac{\PT^l \PCA}{\PCA}\bigg)
1_{L_n^c}\bigg|^{4/l}{\colorp \tilde{p}_j(\alpha_{th})dx_j}&\leq Ch_n^{4-l}(1+|x_{j-1}|)^C
}
for any $x_{j-1}\in \mbbr^m$, $n\geq n_0$, $t\in[0,1]$, $1\leq j\leq n$, and $l\in\{1,2\}$.
\end{lemma}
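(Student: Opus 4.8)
The plan is to exploit the fact that $\Phi_j^t$ is, up to normalization, a conditional expectation. A direct computation using $f_\theta=\lambda F_\theta$ and $\tilde f_t(y)=h_ne^{-\lambda_{th}h_n}f_{\theta_{th}}(y)$ shows that $\Phi_j^t(1)=p_{j,t}^1$, so the map $\varphi\mapsto (p_{j,t}^1)^{-1}\Phi_j^t(\varphi)$ is integration against a probability measure in $(\tau,x,y)$ for each fixed $(x_{j-1},x_j)$. Since $\tilde p_j(\alpha_{th})=p_{j,t}^1$ on $L_n^c$ and $4/l\geq 1$ for $l\in\{1,2\}$, Jensen's inequality gives
\[
\int\bigg|\frac{1}{p_{j,t}^1}\Phi_j^t\bigg(\frac{\PT^l\PCB}{\PCB}\bigg)1_{L_n^c}\bigg|^{4/l}\tilde p_j(\alpha_{th})\,dx_j
\leq \int\Phi_j^t\bigg(\bigg|\frac{\PT^l\PCB}{\PCB}\bigg|^{4/l}\bigg)1_{L_n^c}\,dx_j,
\]
and the same reduction applies to $\PCA$. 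The factor $p_{j,t}^1$ coming from $\tilde p_j(\alpha_{th})$ cancels the normalization, which is the one structural step that makes the estimate possible.

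For the $\PCB$ bound I would integrate in the order $x_j,x,y,\tau$. The integrand $|\PT^l\PCB/\PCB|^{4/l}$ does not depend on $x_j$, so the innermost integral is $\int\PCA 1_{L_n^c}\,dx_j\leq\int\PCA\,dx_j=1$. Integrating in $x$ and invoking Proposition~\ref{pc-est} with $p=4/l$ (admissible because $4/l\geq 2$) yields $\int|\PT^l\PCB/\PCB|^{4/l}\PCB\,dx\leq C(\tau-t_{j-1})^{2/l}(1+|x_{j-1}|)^{C}$. The $y$-integral contributes $\int\tilde f_t(y)\,dy=h_ne^{-\lambda_{th}h_n}\lambda_{th}=O(h_n)$, and the $\tau$-integral contributes $\int_{t_{j-1}}^{t_j}(\tau-t_{j-1})^{2/l}\,d\tau=O(h_n^{2/l+1})$. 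Together with the prefactor $h_n^{-1}$ in $\Phi_j^t$, the product is $h_n^{-1}\cdot O(h_n)\cdot O(h_n^{2/l+1})=O(h_n^{2/l+1})=O(h_n^{4-l})$, using the identity $2/l+1=4-l$ valid for $l\in\{1,2\}$.

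For $\PCA$ the roles of the two continuous densities are interchanged. Here $\PT^l\PCA/\PCA$ does depend on $x_j$, so I would integrate $x_j$ first via Proposition~\ref{pc-est} applied to $\PCA=p_{x+y,\alpha_{th}}^{c,t_j-\tau}(x_j)$, obtaining $C(t_j-\tau)^{2/l}(1+|x+y|)^{C}$ after dropping $1_{L_n^c}$ (which only enlarges the integral). The $x$-integral is then the conditional moment $\int\PCB(1+|x+y|)^{C}\,dx\leq C(1+|x_{j-1}|)^{C}(1+|y|)^{C}$, which follows from standard moment estimates for the continuous SDE under the linear growth bound in (C1). The subsequent $y$-integral is $O(h_n)$ thanks to $\sup_\theta\int|z|^pf_\theta(z)\,dz<\infty$ from (C4), and the $\tau$-integral again yields $O(h_n^{2/l+1})$. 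The same arithmetic produces the target bound $Ch_n^{4-l}(1+|x_{j-1}|)^{C}$.

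The routine work is the exponent bookkeeping, and the two target exponents coincide only because of the identity $2/l+1=4-l$ for $l\in\{1,2\}$. The genuinely delicate point is the Jensen reduction: it requires both the normalization $\Phi_j^t(1)=p_{j,t}^1$ and $4/l\geq 1$, and the borderline case $l=2$ uses $p=2$, the smallest exponent allowed in Proposition~\ref{pc-est}. The integer $n_0$ is chosen so that $h_n\leq 1$ for $n\geq n_0$, guaranteeing that every sub-interval length $\tau-t_{j-1}$ and $t_j-\tau$ lies in $(0,1]$, as required by Proposition~\ref{pc-est}.
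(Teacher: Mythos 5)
Your proof is correct and follows essentially the same route as the paper's: the Jensen reduction using $\Phi_j^t(1)=p_{j,t}^1$ and $\tilde p_j(\alpha_{th})=p_{j,t}^1$ on $L_n^c$, then Proposition~\ref{pc-est} with $p=4/l$ applied to whichever of the two continuous densities carries the derivative, with the moment bound $\int(1+|x+y|)^C\PCB\,dx\leq C(1+|x_{j-1}|)^C(1+|y|)^C$ and $\int(1+|y|)^C\tilde f_t(y)\,dy=O(h_n)$ handling the remaining variables. Your exponent bookkeeping ($h_n^{-1}\cdot h_n\cdot h_n^{2/l+1}=h_n^{4-l}$) matches the paper's conclusion.
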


\begin{proof}
Jensen's inequality and Proposition~\ref{pc-est} yield that
\EQNN{
&\int\bigg|\frac{1}{p_{j,t}^1}\Phi_j^t\bigg(\frac{\PT^l \PCB}{\PCB}\bigg)
1_{L_n^c}\bigg|^{4/l}\tilde{p}_j (\alpha_{th})dx_j \\
&\quad\leq{\colorp \int \Phi_j^t}\bigg(\bigg|\frac{\PT^l \PCB}{\PCB}\bigg|^{4/l}\bigg)dx_j \\
&\quad ={\colorp \lambda_{th}e^{-\lambda_{th}h_n}}\INTTJ\int\bigg|\frac{\PT^l \PCB}{\PCB}\bigg|^{4/l}\PCB dxd\tau \\
&\quad\leq Ch_n^{4-l}(1+|x_{j-1}|)^C
}
for {\colorp any $x_{j-1}\in \mbbr^m$.}
% and sufficiently large $n$ because $\sup_{x_{j-1}\in K''_{n,j-1},\alpha\in \Theta}|1-d_j(x_{j-1},\alpha)|=o(n^{-1})$ by (B1).

Similarly, Proposition~\ref{pc-est} {\colorp and (C4) yield}
\EQNN{&\int\bigg|\frac{1}{p_{j,t}^1}\Phi_j^t\bigg(\frac{\PT^l \PCA}{\PCA}\bigg)1_{L_n^c}\bigg|^{4/l}\tilde{p}_j (\alpha_{th})dx_j \\
&\quad \leq {\colorp \int \Phi_j^t}\bigg(\bigg|\frac{\PT^l \PCA}{\PCA}\bigg|^{4/l}\bigg)dx_j \\
&\quad \leq C{\colorp h_n^{2-l}}\INTTJ \int \int (1+|x+y|)^C\PCB \tilde{f}_t(y) dxdyd\tau \\
&\quad \leq Ch_n^{4-l}(1+|x_{j-1}|)^C
}
for {\colorp any $x_{j-1}\in \mbbr^m$.} We also used the fact
\EQQ{\int (1+|x|)^C\PCB dx
=E[(1+|X_{\tau-t_{j-1},x_{j-1}}^{\alpha_{th},c}|)^C]\leq C(1+|x_{j-1}|)^C}
{\colorp by a similar argument to Proposition 3.1 in Shimizu and Yoshida~\cite{shi-yos06}.}
\end{proof}

{\colorp Let $\Delta_j N_\theta=N_\theta((t_{j-1},t_j]\times E)$.} 

\begin{lemma}\label{incprob}
Assume {\colorp (C1)--(C4)}. 
Then, there exist positive constants {\colorp $\iota$ and $C$} such that for all $j\in\{1,\dots,n\}$ and $x_{j-1}\in {\colorp K''_{n,j-1}}$,
\begin{equation}
{\colorp \sup_{\alpha\in \Theta} P \left(X_{t_j}^\alpha-x_{j-1}\in \{z: F_\theta(z)=0\}\cup \{0\}\middle|\Delta_j N_\theta=1, X_{t_{j-1}}^\alpha=x_{j-1}\right)\leq Ch_n^\iota.}
\end{equation}
\end{lemma}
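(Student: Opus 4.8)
The plan is to condition on the single jump and decompose the increment over $(t_{j-1},t_j]$. Writing $\tau_j$ for the jump time and $Z$ for its mark, I would set
\[
Y_1=X_{\tau_j-}^\alpha-x_{j-1},\qquad Y_2=X_{t_j}^\alpha-X_{\tau_j}^\alpha,
\]
so that $X_{t_j}^\alpha-x_{j-1}=Y_1+Z+Y_2$ on $\{\Delta_jN_\theta=1,\,X_{t_{j-1}}^\alpha=x_{j-1}\}$. Since there is no jump on $(t_{j-1},\tau_j)$ nor on $(\tau_j,t_j]$, both $Y_1$ and $Y_2$ are increments of the continuous flow $X^{\alpha,c}$ over time intervals of length at most $h_n$, started at $x_{j-1}$ and at $X_{\tau_j}^\alpha=X_{\tau_j-}^\alpha+Z$ respectively. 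Conditionally on exactly one jump, the mark $Z$ has density $f_\theta/\int_E f_\theta=F_\theta$. The contribution of $\{0\}$ to the target probability vanishes, because the conditional law of $X_{t_j}^\alpha$ given one jump is absolutely continuous (its density is $p^1_j$ up to normalization), so it suffices to bound $P(F_\theta(X_{t_j}^\alpha-x_{j-1})=0\mid\Delta_jN_\theta=1,\,X_{t_{j-1}}^\alpha=x_{j-1})$.

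I would then split according to whether $|Y_1|+|Y_2|\le h_n^\rho$. On the small-increment event, note that $F_\theta(Z)\neq0$ almost surely, since $P(F_\theta(Z)=0)=\int_{\{F_\theta=0\}}F_\theta\,dz=0$, so $Z\in\supp(F_\theta)$; if in addition $F_\theta(Y_1+Z+Y_2)=0$, then, because $F_\theta>0$ on the interior of its support, the segment from $Z$ to $Y_1+Z+Y_2$ must meet $\p\supp(F_\theta)$, whence $d(Z,\p\supp(F_\theta))\le|Y_1+Y_2|\le h_n^\rho$. Thus this part of the event is contained, outcome by outcome, in $\{d(Z,\p\supp(F_\theta))\le h_n^\rho\}$, and using only the marginal law of $Z$,
\[
P\big(d(Z,\p\supp(F_\theta))\le h_n^\rho\big)=\int_{\{d(z,\p\supp(F_\theta))\le h_n^\rho\}}F_\theta(z)\,dz\le h_n^\ep
\]
by \eqref{jzero-cond}. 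Crucially, the dependence of $Y_2$ on $Z$ is irrelevant here, since we only use a pointwise set inclusion together with the marginal of $Z$; and $\p\supp(F_\theta)$ is $\theta$-independent by (C4)(i), so the bound is uniform in $\alpha$.

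For the complementary event I would invoke moment estimates for the continuous flow. From $P(|Y_1|+|Y_2|>h_n^\rho)\le P(|Y_1|>h_n^\rho/2)+P(|Y_2|>h_n^\rho/2)$, Markov's inequality at order $p$, and the standard increment bound under (C1) (Burkholder--Davis--Gundy and Gronwall, as in Section~\ref{conti-part-subsection}), $E[|Y_1|^p\mid\cdots]\le C_ph_n^{p/2}(1+|x_{j-1}|)^p$, I obtain $P(|Y_1|>h_n^\rho/2)\le C_p(h_n^{1/2-\rho}n^\delta)^p$ on $K''_{n,j-1}$. The same bound holds for $Y_2$ after controlling its initial condition through $E[(1+|X_{\tau_j}^\alpha|)^p]\le C_p(1+|x_{j-1}|)^p$, which uses the linear-growth control in (C1) and the moment bound $\sup_\theta\int|z|^pf_\theta(z)\,dz<\infty$ from (C4). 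Since $\rho$ was chosen so that $n^{1+\eta'}h_n^{1+(m+\gamma)\rho}\to0$, we have $h_n\le n^{-\beta_0}$ eventually for a fixed $\beta_0>0$; choosing $\delta$ small enough that $\delta<(1/2-\rho)\beta_0$ (the ``reset of $\delta$'' already anticipated in Section~\ref{B1B2-subsection}) yields $h_n^{1/2-\rho}n^\delta\le h_n^{\iota_0}$ for some $\iota_0>0$, and taking $p$ large gives $P(|Y_1|+|Y_2|>h_n^\rho)\le Ch_n^{\iota}$. Combining the two parts proves the claim with $\iota$ replaced by $\iota\wedge\ep$, uniformly in $\alpha$.

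The main obstacle I expect is the last, quantitative balancing step: the continuous increments must be shown to fall below the threshold $h_n^\rho$ with probability bounded by a genuine power of $h_n$, uniformly over starting points of size up to $n^\delta$ and over $\alpha$. This forces one to track the polynomial factor $(1+|x_{j-1}|)^p$ carefully --- in particular to verify that its exponent stays linear in $|x_{j-1}|$ rather than growing with $p$, which rests on the linear growth of $a$ and the boundedness of $b$ in (C1) --- and then to trade the growth $n^{\delta p}$ against the decay $h_n^{(1/2-\rho)p}$ using the rate afforded by the chosen $\rho$ and the balance condition \eqref{balance-cond}. The geometric reduction to \eqref{jzero-cond} is conceptually the heart of the argument, but it is clean once positivity of $F_\theta$ on the interior of its support is used.
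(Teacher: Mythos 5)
Your decomposition and overall strategy coincide with the paper's: write $X_{t_j}^\alpha-x_{j-1}=Y_1+Z+Y_2$, split on whether the continuous fluctuation $|Y_1|+|Y_2|$ exceeds $h_n^\rho$, reduce the small-fluctuation part to \eqref{jzero-cond} by a geometric argument on the jump mark $Z$, and kill the large-fluctuation part by Markov/moment bounds traded against the $n^\delta$ growth allowed on $K''_{n,j-1}$. The large-fluctuation part and your disposal of the singleton $\{0\}$ by absolute continuity of the one-jump conditional law are fine and match the paper's ``similar argument to \eqref{p1-diff-est}''.

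The gap is in the small-fluctuation part: you claim that $\{F_\theta(Y_1+Z+Y_2)=0,\ |Y_1|+|Y_2|\le h_n^\rho\}$ is contained in $\{d(Z,\p\supp(F_\theta))\le h_n^\rho\}$, and you drop the paper's second term $P(|Z|\le 2h_n^\rho\mid\cdots)\le Ch_n^{(m+\gamma)\rho}$. That inclusion fails because the paper takes $\p\supp(F_\theta)$ to be the boundary \emph{in} $E=\mbbr^m\setminus\{0\}$, with the convention $d(z,\emptyset)=\infty$. For one-sided jumps --- e.g.\ the Gamma example in Section~\ref{jump-diffusion-LAN-subsection}, where the paper explicitly notes $\p\supp(F_\theta)=\emptyset$ so that \eqref{jzero-cond} is vacuous --- the segment from $Z>0$ to a negative value of $Y_1+Z+Y_2$ exits the support only through the origin, which is excluded from $\p\supp(F_\theta)$; your set inclusion is then false, and the uncovered event forces $0<Z\le h_n^\rho$, an event of genuine order $h_n^{(m+\gamma)\rho}$ (as large as roughly $h_n^{1/4}$ when $m=1$, $\gamma=0$). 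This contribution can only be controlled through the small-ball bound $F_\theta(z)1_{\{|z|\le\epsilon'\}}\le C|z|^\gamma$ in (C4), which is exactly where the exponent $(m+\gamma)\rho$ in the paper's bound --- and ultimately the $(m+\gamma)/2$ in \eqref{balance-cond} --- comes from. The fix is simply to reinstate the term $P(|Z|\le 2h_n^\rho\mid\Delta_jN_\theta=1,\ldots)$ alongside the $d(Z,\p\supp(F_\theta))\le h_n^\rho$ term, as the paper does. (Your auxiliary premise that $F_\theta>0$ on the interior of its support is not among (C1)--(C4), but the paper's own inclusion rests on the same tacit understanding, so I do not count that against you.)
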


\begin{proof}
{\colorp Let $\rho$ be the one in (C4).}
For the jump time {\colorp $\tau_j$} on {\colorp $(t_{j-1},t_j]$} and large enough $n$, {\colorp (C4) and a similar argument to (\ref{p1-diff-est}) yield}
{\colorp
\begin{align*}
&P \left(X_{t_j}^\alpha-x_{j-1}\in \{z| F_\theta(z)=0\}\cup \{0\}|\D_j N_\theta=1, X_{t_{j-1}}^\alpha=x_{j-1}\right)\\
&\leq P\left(\left\{X_{t_j}^\alpha-x_{j-1}\in\{z| F_\theta(z)=0\}\cup \{0\}\right\}\cap\left\{\left|X_{t_j}^\alpha-X_{\tau_j}^\alpha+X_{\tau_j-}^\alpha-x_{j-1}\right|\leq h_n^\rho\right\}\middle|\D_j N_\theta=1, X_{t_{j-1}}^\alpha=x_{j-1}\right)\\
&+P\left(\left|X_{t_j}^\alpha-X_{\tau_j}^\alpha+X_{\tau_j-}^\alpha-x_{j-1}\right|> h_n^\rho\middle|\D_j N_\theta=1,X_{t_{j-1}}^\alpha=x_{j-1}\right)\\
&\leq P\left(d(X_{\tau_j}^\alpha-X_{\tau_j-}^\alpha,\partial\supp(F_\theta))\leq h_n^\rho\middle|\Delta_jN_\theta=1, X_{t_{j-1}}^\alpha=x_{j-1}\right) \\
&\quad +P\left(|X_{\tau_j}^\alpha-X_{\tau_j-}^\alpha|\leq h_n^\rho\middle|\Delta_jN_\theta=1, X_{t_{j-1}}^\alpha=x_{j-1}\right)
+Ch_n^2 \\
&\leq h_n^{\ep}+Ch_n^{(m+\gamma)\rho}+Ch_n^2.
\end{align*}
}
\end{proof}

We interpret {\colorp $\PT^l\tilde{f}_t/\tilde{f}_t(z)=0$ if $\tilde{f}_t(z)=0$ or $z=0$} for $l\in \{1,2\}$.

\begin{proposition}\label{p1-est-prop}
Assume {\colorp (C1)--(C4)}. Then there exist {\colorp a positive constant $C$,} $n_0\in \mbbn$, and $\epsilon\in (0,1)$ such that
\begin{eqnarray}
\int\bigg|\frac{\PT^l p^1_{j,t}}{p_{j,t}^1}-\frac{\PT^l\tilde{f}_t}{\tilde{f}_t}(\Delta x_j)\bigg|^{4/l}
1_{L_n^c}\tilde{p}_j(\alpha_{th})dx_j 1_{K''_{n,j-1}}(x_{j-1})
&\leq& Ch_n^{1+\ep}(1+|x_{j-1}|)^C, \label{p1-tilde-f-est} \\
\int\bigg|\DELP{1}{\sigma^l}\bigg|^{4/l}1_{L_n^c}\tilde{p}_j(\alpha_{th})dx_j1_{K''_{n,j-1}}(x_{j-1})&\leq& Ch_n(1+|x_{j-1}|)^C, \\
\int\bigg|\DELP{1}{\sigma \PT}\bigg|^21_{L_n^c}\tilde{p}_j(\alpha_{th})dx_j1_{K''_{n,j-1}}(x_{j-1})&\leq& C{\colorp h_n}(1+|x_{j-1}|)^C
\end{eqnarray}
for $t\in [0,1]$, $x_{j-1}\in\mbbr^m$, $l\in \{1,2\}$, {\colorp $1\leq j\leq n$,} and $n\geq n_0$.
\end{proposition}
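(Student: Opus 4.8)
The plan is to start from the representations (\ref{del-p1-eq}) and (\ref{del2-p1-eq}), which express $\DELP{1}{\theta^l}$ as $(p_{j,t}^1)^{-1}\Phi_j^t$ applied to a sum of logarithmic-derivative factors. Since $\DELPY{\PT^l}(\Delta x_j)$ is constant in the integration variables $(x,y,\tau)$ of $\Phi_j^t$ and $\Phi_j^t(1)=p_{j,t}^1$, I can write $\DELP{1}{\theta^l}-\DELPY{\PT^l}(\Delta x_j)=(p_{j,t}^1)^{-1}\Phi_j^t(g_l)$, where the \emph{jump part} of $g_l$ is $\DELPY{\PT^l}(y)-\DELPY{\PT^l}(\Delta x_j)$ and the remaining terms involve $\PT^k\PCB/\PCB$, $\PT^k\PCA/\PCA$ (and, for $l=2$, their products with $\DELPY{\PT}(y)$). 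Because $Q:=(p_{j,t}^1)^{-1}\Phi_j^t$ is a probability average (Proposition~\ref{gob-prop} gives $p_{j,t}^1>0$), Jensen's inequality gives $|Q(g_l)|^{4/l}\le Q(|g_l|^{4/l})$; combined with $\tilde p_j(\alpha_{th})1_{L_n^c}=p_{j,t}^11_{L_n^c}$ and $|a+b|^{4/l}\le C(|a|^{4/l}+|b|^{4/l})$, the problem reduces to bounding $\int_{L_n^c}\Phi_j^t(|\cdot|^{4/l})\,dx_j$ separately for the jump part and the remainder.

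The remainder terms are controlled directly by Lemma~\ref{del-PCB-conv}, which makes the contributions of $\PT^k\PCB/\PCB$ and $\PT^k\PCA/\PCA$ of order $h_n^{4-k}$, hence $O(h_n^2)$ or smaller. For the $l=2$ cross terms such as $(\PT\PCB/\PCB)\DELPY{\PT}(y)$ I would apply the Cauchy--Schwarz inequality under $\int_{L_n^c}\Phi_j^t(\cdot)\,dx_j$, pairing the factor $\int_{L_n^c}\Phi_j^t(|\PT\PCB/\PCB|^4)\,dx_j=O(h_n^3)$ from Lemma~\ref{del-PCB-conv} with $\int_{L_n^c}\Phi_j^t(|\DELPY{\PT}(y)|^4)\,dx_j=O(h_n)$; the latter follows from the polynomial bound $|\DELPY{\PT}(y)|\le C(1+|y|)^{C}$ in (\ref{jcond}), because integrating $x_j$ and $x$ out leaves only $\int f_{\theta_{th}}(y)(1+|y|)^{C}\,dy$ and the single time integral $\INTTJ d\tau=h_n$. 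All remainder contributions are thus $O(h_n^2)$.

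For the jump part I would split the $x_j$-domain according to whether $F_{\theta_{th}}(\Delta x_j)\ne0$ and $\Delta x_j\ne0$ (Case~a) or not (Case~b). In Case~a the convention gives the genuine value of $\DELPY{\PT^l}(\Delta x_j)$, the argument-independent $\PT^k\lambda$ contributions cancel in the difference (for $l=2$ one first factorises the difference of $(\PT\log\tilde f_t)^2$), and (\ref{jcond2}) bounds the increment by $C|y-\Delta x_j|(1+|y|+|\Delta x_j|)^{C}$. Writing $y-\Delta x_j=-(x-x_{j-1})-(x_j-(x+y))$ exhibits it as the sum of the pre- and post-jump continuous displacements; standard moment estimates for the continuous flow (as used in Lemma~\ref{del-PCB-conv}) give $\int|x-x_{j-1}|^{p}\PCB\,dx$ and $\int|x_j-(x+y)|^{p}\PCA\,dx_j$ of order $h_n^{p/2}(1+|x_{j-1}|)^{C}$ and $h_n^{p/2}(1+|x+y|)^{C}$. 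With $p=4/l$ this yields a factor $h_n^{2/l}$ and, with the time integral $\INTTJ d\tau=h_n$, a Case~a contribution of order $h_n^{1+2/l}(1+|x_{j-1}|)^{C}$, i.e.\ $O(h_n^2)$ or better.

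The delicate part — and the step I expect to be the main obstacle — is Case~b, where the convention forces $\DELPY{\PT^l}(\Delta x_j)=0$, the Lipschitz estimate (\ref{jcond2}) is unavailable, and only the polynomial bound $|\DELPY{\PT^l}(y)|\le C(1+|y|)^{C}$ of (\ref{jcond}) survives, with no small factor to spare. To recover smallness I would use that the $\Phi_j^t$-mass of Case~b is small: identifying $d\nu=e^{-\lambda_{th}h_n}\PCB f_{\theta_{th}}(y)\PCA\,dx\,dy\,d\tau\,dx_j$ with the sub-law of $(X_{\tau_j-}^\alpha,X_{\tau_j}^\alpha-X_{\tau_j-}^\alpha,\tau_j,X_{t_j}^\alpha)$ on the one-jump event conditional on $X_{t_{j-1}}^\alpha=x_{j-1}$, the Case~b region is exactly $\{X_{t_j}^\alpha-x_{j-1}\in\{z:F_{\theta_{th}}(z)=0\}\cup\{0\}\}$, whose $\nu$-mass equals $P(\Delta_jN_\theta=1)\times O(h_n^\iota)=O(h_n^{1+\iota})$ by Lemma~\ref{incprob}, which is valid precisely for $x_{j-1}\in K''_{n,j-1}$ (hence the factor $1_{K''_{n,j-1}}$). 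A Hölder split $\int_{B}|\DELPY{\PT^l}(y)|^{4/l}\,d\nu\le(\int|\DELPY{\PT^l}(y)|^{4p/l}\,d\nu)^{1/p}\,\nu(B)^{1/q}$ with the first factor $O(h_n^{1/p})$ (again by (\ref{jcond})) and $\nu(B)^{1/q}=O(h_n^{(1+\iota)/q})$ then gives $O(h_n^{1+\iota/q})$; taking $p=q=2$ fixes $\epsilon=\iota/2\in(0,1)$ and finishes the $\theta$-estimate. Finally, the $\sigma$- and mixed $\sigma\theta$-estimates are easier, since $f_\theta$ does not depend on $\sigma$: there is no jump part to subtract, every surviving factor is a $\sigma$- or $\theta$-logarithmic derivative of a continuous density controlled by (\ref{pc-est-lemma-eq2})--(\ref{pc-est-lemma-eq3}) (of order $O(1)$, with no $\sqrt{h_n}$ gain), and the single time integral $\INTTJ d\tau$ alone supplies the factor $h_n$, giving the stated $O(h_n(1+|x_{j-1}|)^{C})$ bounds.
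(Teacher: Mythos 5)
Your proposal is correct and follows essentially the same route as the paper's proof: the same decomposition via (\ref{del-p1-eq})--(\ref{del2-p1-eq}) with Jensen's inequality on the normalized $\Phi_j^t$, Lemma~\ref{del-PCB-conv} plus Cauchy--Schwarz for the continuous-part and cross terms, and the same split of the $x_j$-domain into the support of $\tilde f_t(\cdot-x_{j-1})$ (treated by (\ref{jcond2}) and flow moment bounds, giving $O(h_n^{1+2/l})$) versus its complement (treated by Cauchy--Schwarz together with Lemma~\ref{incprob}, giving $O(h_n^{1+\iota/2})$ and hence $\epsilon=\iota/2$). The direct $O(h_n)$ estimates for the $\sigma$ and mixed derivatives likewise match the paper's argument.
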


\begin{proof}
{\colorp Let $Q_j=Q_j(x_{j-1})=\{x_j\in E| \tilde{f}_t(\Delta x_j)\neq0\}\cup \{0\}$.
From Assumption (C4), $Q_j$ does not depend on $t$.}
First we set $l=1$.
We decompose as
\EQNN{&\int\bigg|\frac{\PT^lp^1_{j,t}}{p_{j,t}^1}-\frac{\PT^l\tilde{f}_t}{\tilde{f}_t}(\Delta x_j)\bigg|^{4}1_{L_n^c}\tilde{p}_j(\alpha_{th})dx_j {\colorp 1_{K''_{n,j-1}}}\\
&=\int\bigg\{{\colorp \bigg|\DELP{1}{\theta}-\DELPY{\PT}{\colorp (\Delta x_j)}\bigg|^41_{Q_j}(x_j)+\bigg|\DELP{1}{\theta}\bigg|^41_{Q^c_j}(x_j)\bigg\}}1_{L_n^c}\tilde{p}_j{\colorp (\alpha_{th})dx_j1_{K''_{n,j-1}}}.}
{\colorp (\ref{del-p1-eq}), Jensen's inequality, (\ref{jcond2}),} and Lemma~\ref{del-PCB-conv} yield
\EQN{\label{del-p1-est} &\int\bigg|\DELP{1}{\theta}-\DELPY{\PT}{\colorp (\Delta x_j)}\bigg|^41_{Q_j}(x_j)1_{L_n^c}\tilde{p}_j(\alpha_{th})dx_j1_{K''_{n,j-1}} \\
&\quad \leq {\colorp C}\int \bigg|\frac{1}{p_{j,t}^1}\Phi_j^t\bigg(\DELPY{\PT}(y)-\DELPY{\PT}(\Delta x_j)\bigg)\bigg|^41_{Q_j}(x_j)p_{j,t}^1dx_j1_{K''_{n,j-1}}+Ch_n^3(1+|x_{j-1}|)^C \\
&\quad \leq{\colorp C}\int \Phi_j^t\bigg(\bigg|\DELPY{\PT}(y)-\DELPY{\PT}(\Delta x_j)\bigg|^4\bigg)1_{Q_j}(x_j)dx_j1_{K''_{n,j-1}}+Ch_n^3(1+|x_{j-1}|)^C \\
&\quad \leq C\int \Phi_j^t(|y-\Delta x_j|^4(1+|y|+|\Delta x_j|)^{ C})dx_j1_{K''_{n,j-1}}+Ch_n^3(1+|x_{j-1}|)^C.
}

Obviously, $|y-\Delta x_j|^4\leq C|x_j-x-y|^4+C|x-x_{j-1}|^4$, and we can easily see that
\EQNN{\int |x_j-x-y|^p \PCA dy&=E_{x+y}[|X_{t_j-\tau}^{\alpha_{th},c}-x-y|^p]\leq C(t_j-\tau)^{p/2}(1+|x+y|)^p, \\
\int |x-x_{j-1}|^p \PCB dx&=E_{x_{j-1}}[|X_{\tau-t_{j-1}}^{\alpha_{th},c}-x_{j-1}|^p]\leq C(\tau-t_{j-1})^{p/2}(1+|x_{j-1}|)^p.}
Hence it follows that
\EQN{\label{Phi-Delta-est} &\int \Phi_j^t(|y-\Delta x_j|^4(1+|y|+|\Delta x_j|)^{ C})dx_j1_{K''_{n,j-1}} \\
&\quad \leq Ch_n^{-1}\INTTJ {\colorp \int \int (t_j-\tau)^2}(1+|x+y|)^C\PCB \tilde{f}_t(1+|y|)^C(1+|x+y-x_{j-1}|)^Cdxdyd\tau \\
&\quad \quad +{\colorp C}h_n^{-1}\INTTJ \int \int \PCB \tilde{f}_t|x-x_{j-1}|^4(1+|y|)^{{\colorp C}}(1+|x+y-x_{j-1}|)^{{\colorp C}}dxdyd\tau \\
&\quad \leq  C\INTTJ {\colorp (t_j-\tau)^2} \int \PCB (1+|x|)^C(1+|x-x_{j-1}|)^Cdxd\tau \\
&\quad \quad + C\INTTJ \int \PCB |x-x_{j-1}|^4(1+|x-x_{j-1}|)^{{\colorp C}}dxd\tau \\
&\quad \leq Ch_n^3(1+|x_{j-1}|)^C+ C\INTTJ(\tau-t_{j-1})^2d\tau (1+|x_{j-1}|)^C\\
&\quad \leq Ch_n^3(1+|x_{j-1}|)^C.
}
From the Cauchy--Schwarz inequality, {\colorp (C4), Lemma~\ref{incprob} and a similar estimate to} \eqref{del-p1-est}, we obtain
\EQN{\label{Qjc-4-est}
&\int\bigg|\DELP{1}{\theta}\bigg|^4{\colorp 1_{Q^c_j}(x_j)}1_{L_n^c}\tilde{p}_jdx_j1_{K''_{n,j-1}} \\
&\leq \sqrt{\int\bigg|\DELP{1}{\theta}\bigg|^8{\colorp p_{j,t}^1}dx_j}\sqrt{P\left(\left\{X_{t_j}-x_{j-1}\in{\colorp \{z\in E|\tilde{f}_t(z)=0\}\cup \{0\}}\right\}\cap \left\{ \D_j {\colorp N_\theta}=1\right\} |\XJ=x_{j-1}\right)}1_{K''_{n,j-1}} \\
&\leq C{\colorp h_n^{1+\iota/2}}(1+|x_{j-1}|)^C,
}
so that
\EQQ{\int\bigg|\DELP{1}{\theta}-\DELPY{\PT} (\Delta x_j)\bigg|^41_{L_n^c}\tilde{p}_j(\alpha_{th})dx_j1_{K''_{n,j-1}}(x_{j-1})\leq C {\colorp h_n^{(1+\iota/2)\wedge 3}}(1+|x_{j-1}|)^C.}

Next, we show {\colorp (\ref{p1-tilde-f-est})} for $l=2$.
{\colorp A similar argument to Lemma~\ref{del-PCB-conv} yields}
\EQNN{&\int \bigg|\frac{1}{p_{j,t}^1}\Phi_j^t\bigg(\frac{\PT \mathcal{P}_{j,t}}{\mathcal{P}_{j,t}}\DELPY{\PT}(y)\bigg)\bigg|^21_{L_n^c}\tilde{p}_j{\colorp (\alpha_{th}) dx_j1_{K''_{n,j-1}}} \\
&\quad \leq \sqrt{\int \Phi_j^t\bigg(\bigg|\frac{\PT \mathcal{P}_{j,t}}{\mathcal{P}_{j,t}}\bigg|^4\bigg)dx_j}
\sqrt{\int \Phi_j^t\bigg(\bigg|\DELPY{\PT}(y)\bigg|^4\bigg)dx_j}{\colorp 1_{K''_{n,j-1}}}
\leq C h_n^2(1+|x_{j-1}|)^C,
}
where $\mathcal{P}_{j,t}=\PCB$ or $\PCA$.

Together with Jensen's inequality, {\colorp (\ref{del2-p1-eq}),} Lemma~\ref{del-PCB-conv}, (C4), {\colorp and a similar argument to (\ref{Phi-Delta-est}),} we have
\EQNN{&\BIGINTTP{2}{\DELP{1}{\theta^2}-\DELPY{\PT^2}{\colorp (\Delta x_j)}}{1_{Q_j}(x_j)1_{L_n^c}}{\colorp 1_{K''_{n,j-1}}} \\
&\quad \leq C\int \bigg|\frac{1}{p_{j,t}^1}\Phi_j^t\bigg(\DELPY{\PT^2}(y)-\DELPY{\PT^2}(\Delta x_j)\bigg)\bigg|^21_{Q_j}(x_j)p_{j,t}^1dx_j{\colorp 1_{K''_{n,j-1}}}
+Ch_n^2(1+|x_{j-1}|)^C \\
&\quad \leq C\int\Phi_j^t(|y-\Delta x_j|^2(1+|y|+|\Delta x_j|)^C)dx_j
+Ch_n^2(1+|x_{j-1}|)^C \\
&\quad \leq Ch_n^2(1+|x_{j-1}|)^C.
}
{\colorp Together with a similar argument to (\ref{Qjc-4-est}), we obtain (\ref{p1-tilde-f-est}) for $l=2$.}

For the estimate for $\PS^lp_{j,t}^1$, we first have
\EQNN{\DELP{1}{\sigma}&=\frac{1}{p_{j,t}^1}\Phi_j^t\bigg(\frac{\PS \PCA}{\PCA}+\frac{\PS \PCB}{\PCB}\bigg), \\
\DELP{1}{\sigma^2}&=\frac{1}{p_{j,t}^1}\Phi_j^t\bigg(\frac{\PS^2 \PCA}{\PCA}+\frac{\PS^2 \PCB}{\PCB}+2\frac{\PS \PCB}{\PCB}\frac{\PS \PCA}{\PCA}\bigg).
}
{\colorp Thanks to} Jensen's inequality and Proposition~\ref{pc-est},
we obtain
\EQNN{&\BIGINTTP{4}{\DELP{1}{\sigma}}{1_{L_n^c}}1_{K''_{n,j-1}} \\
&\quad \leq C\int \Phi_j^t\bigg(\bigg|\frac{\PS \PCA}{\PCA}+\frac{\PS \PCB}{\PCB}\bigg|^4\bigg)dx_j1_{K''_{n,j-1}} \\
&\quad \leq Ch_n(1+|x_{j-1}|)^C
}
and
\EQNN{&\BIGINTTP{2}{\DELP{1}{\sigma^2}}{1_{L_n^c}}1_{K''_{n,j-1}} \\
&\quad \leq C\int \Phi_j^t\bigg(\bigg|\frac{\PS^2 \PCA}{\PCA}+\frac{\PS^2 \PCB}{\PCB}+2\frac{\PS \PCB}{\PCB}\frac{\PS \PCA}{\PCA}\bigg|^2\bigg)dx_j1_{K''_{n,j-1}} \\
&\quad \leq Ch_n(1+|x_{j-1}|)^C.
}
%by Proposition~\ref{pc-est}, Remark~\ref{third-derivative-rem}, and Lebesgue's dominated convergence theorem.

{\colorp Similarly, we have}
\EQQ{\BIGINTTP{2}{\DELP{1}{\sigma \PT}}{1_{L_n^c}}1_{K''_{n,j-1}}\leq C{\colorp h_n}(1+|x_{j-1}|)^C.}

\end{proof}

\begin{proposition}
Assume {\colorp (C1)--(C4)}. Then (B3) holds true.
\end{proposition}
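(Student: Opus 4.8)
The plan is to verify the three parts of (B3) separately, reducing everything to the density estimates of Propositions~\ref{pc-est} and~\ref{p1-est-prop}. First, the structural requirements are immediate. Since Proposition~\ref{gob-prop} gives $p_{j,t}^0>0$ and $p_{j,t}^1>0$ for all $x_{j-1},x_j\in\mbbr^m$ and $t\in[0,1]$, we have $\tilde{p}_j(\alpha_{th})=p_{j,t}^01_{L_n}(\Delta x_j)+p_{j,t}^11_{L_n^c}(\Delta x_j)>0$ everywhere, so $\tilde{p}_j$ has no zero points and the requirement that its zero set not depend on $\alpha$ holds vacuously; the finiteness of $\tilde{E}_{\alpha_0}[|\zeta_{j,t}^{1,h}|^2|\mca_{j-1,n}]$ will follow from the integral bounds below. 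Moreover, since $d_j(\bar{x}_{j-1},\alpha)\in[1-\delta_n,1]$ with $\delta_n\to0$ by (\ref{dj-est}), the conditional expectation satisfies $\tilde{E}_{\alpha_{th}}[\,\cdot\,|\mca_{j-1,n}]\le C\int\cdot\,\tilde{p}_j(\alpha_{th})\,dx_j$ for large $n$, so it suffices to bound the latter integrals.

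Next I would decompose $\zeta_{j,t}^{l,h}$. Because $\alpha_{th}=\alpha_0+t\epsilon_nh$ is affine in $t$, differentiation gives $\frac{d}{dt}\tilde{p}_j(\alpha_{th})=(\epsilon_nh)^\top\PA\tilde{p}_j(\alpha_{th})$ and $(\frac{d}{dt})^2\tilde{p}_j(\alpha_{th})=(\epsilon_nh)^\top\PA^2\tilde{p}_j(\alpha_{th})(\epsilon_nh)$, so writing $h=(h_\sigma,h_\theta)$ and using $\epsilon_n={\rm diag}(n^{-1/2}I_{d_1},(nh_n)^{-1/2}I_{d_2})$, each $\zeta_{j,t}^{l,h}$ splits into $\sigma$-, $\theta$-, and mixed blocks carrying the scalings $n^{-1/2}$, $(nh_n)^{-1/2}=T_n^{-1/2}$, and $n^{-1/2}T_n^{-1/2}$. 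I would further split along $\tilde{p}_j=p_{j,t}^01_{L_n}+p_{j,t}^11_{L_n^c}$, so that on $L_n$ only the continuous-density ratios $\PS^lp_{j,t}^0/p_{j,t}^0$ and $\PT^lp_{j,t}^0/p_{j,t}^0$ appear, while on $L_n^c$ only $\PS^lp_{j,t}^1/p_{j,t}^1$, $\PT^lp_{j,t}^1/p_{j,t}^1$, and their mixed analogues appear.

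On $L_n$ I would invoke Proposition~\ref{pc-est} (together with Remark~\ref{third-derivative-rem}) with $T=h_n$: the bounds $\int|\PT^lp^{c,T}/p^{c,T}|^pp^{c,T}\,dy\le C_ph_n^{p/2}(1+|x|)^C$ and $\int|\PS^lp^{c,T}/p^{c,T}|^pp^{c,T}\,dy\le C_p(1+|x|)^C$ (and the mixed version carrying $h_n^{p/2}$) combine with the scalings to give, for each $j$, a bound of order $n^{-2}(1+|x_{j-1}|)^C$ for the fourth-power $\zeta^{1,h}$ contributions and $n^{-2}$ or $n^{-2}h_n^{-1}$ for the squared $\zeta^{2,h}$ contributions. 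On $L_n^c$ I would use Proposition~\ref{p1-est-prop}: after adding and subtracting $\PT^l\tilde{f}_t/\tilde{f}_t(\Delta x_j)$, estimate (\ref{p1-tilde-f-est}) controls the difference by $Ch_n^{1+\epsilon}(1+|x_{j-1}|)^C$, and the remaining term $\int|\PT^l\tilde{f}_t/\tilde{f}_t(\Delta x_j)|^{4/l}1_{L_n^c}\tilde{p}_j\,dx_j$ is $O(h_n)(1+|x_{j-1}|)^C$ because $p_{j,t}^1$ has total mass $O(h_n)$ and $|\PT^l\log f_\theta|$ has polynomially-bounded $f_\theta$-moments by (C4); the $\sigma$- and mixed terms are bounded directly by Proposition~\ref{p1-est-prop}. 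Multiplying by the $\theta$-scaling $(nh_n)^{-2}$ (resp.\ $n^{-2}$ for $\sigma$) turns these into per-$j$ bounds of order $n^{-2}h_n^{-1}(1+|x_{j-1}|)^C$, which is the borderline rate.

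Finally I would sum over $j=1,\dots,n$ and let $n\to\infty$. Each per-$j$ bound carries a factor $(1+|X_{t_{j-1}}|)^C$ that is $\mca_{j-1,n}$-measurable; by (\ref{moment-cond}) we have $E[\sum_{j}(1+|X_{t_{j-1}}|)^C]\le Cn$, so $\sum_j(1+|X_{t_{j-1}}|)^C=O_p(n)$. Hence the $\theta$-contributions sum to $O_p(n\cdot n^{-2}h_n^{-1})=O_p(T_n^{-1})\to0$ and the $\sigma$- and mixed contributions to $O_p(n^{-1})\to0$, first in $P_{\alpha_0,n}$-probability and then, by the total-variation equivalence $\lVert P_{\alpha_0,n}-\tilde{P}_{\alpha_0,n}\rVert\to0$ of Theorem~\ref{log-likelihood-approx-thm}, in $\tilde{P}_{\alpha_0,n}$-probability, which is the required convergence in (B3). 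The main obstacle is the $\theta$-block on $L_n^c$, where the large $(nh_n)^{-2}$ scaling is only offset by the $O(h_n)$ jump mass; this is precisely where the comparison $\PT^lp_{j,t}^1/p_{j,t}^1\approx\PT^l\tilde{f}_t/\tilde{f}_t$ from Proposition~\ref{p1-est-prop}—and the Lipschitz bound (\ref{jcond2}) underlying it—is essential, since the ratio $\PT^lp_{j,t}^1/p_{j,t}^1$ cannot be controlled pointwise.
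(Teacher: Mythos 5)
Your proposal is correct and follows essentially the same route as the paper: differentiate along $t\mapsto\alpha_{th}$ so that the blocks of $\epsilon_n$ produce the scalings $n^{-1}$, $T_n^{-1}$, $(nT_n)^{-1/2}$, control the $L_n$-part and the $\sigma$/mixed parts on $L_n^c$ by Propositions~\ref{pc-est} and~\ref{p1-est-prop}, and reduce the borderline $\theta$-block on $L_n^c$ to $T_n^{-2}\sum_j\int(|\PT^2\tilde f_t/\tilde f_t|^2+|\PT\tilde f_t/\tilde f_t|^4)(\Delta x_j)1_{L_n^c}\tilde p_j\,dx_j=O_p(T_n^{-1})$ via the $O(h_n)$ jump mass and (C4). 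The only point to tidy up is that the bounds you quote from Proposition~\ref{p1-est-prop} carry the indicator $1_{K''_{n,j-1}}(x_{j-1})$, so the final sum should be performed on $\cap_{j'}K_{n,j'}$ with the complement discarded by (B1), rather than by the unrestricted moment bound $\sum_j(1+|X_{t_{j-1}}|)^C=O_p(n)$ alone.
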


\begin{proof}
{\colorp (\ref{dj-est}),} Propositions~\ref{pc-est}, and \ref{p1-est-prop} yield
\EQNN{&\SUMJ\sup_{t\in[0,1]}D_{j,h}^{-1}\int (|\zeta_{j,t}^{2,h}|^2+|\zeta_{j,t}^{1,h}|^4)\tilde{p}_j(\alpha_{th})dx_j{\colorp 1_{K''_{n,j-1}}} \\
&\quad \leq C\SUMJ\sup_{t\in[0,1]}D_{j,h}^{-1}\int\bigg(\bigg|\frac{1}{n}\DELTP{\PS^2}\bigg|^2+\bigg|\frac{1}{T_n}\DELTP{\PT^2}\bigg|^2+\bigg|\frac{1}{\sqrt{nT_n}}\DELTP{\PT\PS}\bigg|^2+\frac{1}{n^2}\bigg|\DELTP{\PS}\bigg|^4+\frac{1}{T_n^2}\bigg|\DELTP{\PT}\bigg|^4\bigg)\tilde{p}_j(\alpha_{th})dx_j1_{K''_{n,j-1}} \\
&\quad \leq \frac{C}{T_n^2}\SUMJ\sup_{t\in[0,1]}D_{j,h}^{-1}\int \bigg(\bigg|\DELPY{\PT^2}\bigg|^2+\bigg|\DELPY{\PT}\bigg|^4\bigg)(\Delta x_j)1_{L_n^c}\tilde{p}_j(\alpha_{th})dx_j1_{K''_{n,j-1}}+o_p(1).
}
Then {\colorp (\ref{dj-est}), (B1), (C4) and a similar argument to (\ref{Phi-Delta-est})} yield the conclusion because $\PT^2\tilde{f}_t/\tilde{f}_t=\PT^2\log \tilde{f}_t+(\PT\log \tilde{f}_t)^2$.

\end{proof}

We turn to observe (B4$'$). Let $\tilde{p}_{j,0}=\tilde{p}_j(\alpha_0)$.
{\colorp (B1), (\ref{dj-est}), Proposition~\ref{p1-est-prop}, and the Cauchy--Schwarz inequality} yield
\EQNN{&\SUMJ D_{j,0}^{-1}\INTTP{0}{\tilde{\eta_j}\tilde{\eta}_j^\top} \\
&\quad =\SUMJ D_{j,0}^{-1}\INTTP{0}{
\MAT{cc}{{\colorp \frac{\PS p_{j,0}^0(\PS p_{j,0}^0)^\top}{n(p_{j,0}^0)^2}}1_{L_n}
& \frac{\PS p_{j,0}^0(\PT p_{j,0}^0)^\top}{n\sqrt{h_n}(p_{j,0}^0)^2}1_{L_n} \\
\frac{\PT p_{j,0}^0(\PS p_{j,0}^0)^\top}{n\sqrt{h_n}(p_{j,0}^0)^2}1_{L_n}
& \frac{\PT\tilde{f}_0\PT \tilde{f}_0^\top}{nh_n\tilde{f}_0^2}{\colorp (\Delta x_j)}1_{L_n^c}+\frac{\PT p_{j,0}^0(\PT p_{j,0}^0)^\top}{nh_n(p_{j,0}^0)^2}1_{L_n}}}+o_p(1).
}
\bigskip

{\colorp Hence,} %Propositions~\ref{pc-est} and \ref{p1-est-prop} and \cite[Lemma 9]{gen-jac93} yield that 
(B4$'$) follows if we show
\EQ{\label{ps-conv} \frac{1}{n}\SUMJ D_{j,0}^{-1}\INTTP{0}{\DELPPT{\PS}{\PS}1_{L_n}}\TOP  \Gamma_1,}
\EQ{\label{pt-conv} \frac{1}{nh_n}\SUMJ D_{j,0}^{-1}\INTTP{0}{\bigg\{\DELPYYT{\colorp (\Delta x_j)}1_{L_n^c}+\DELPPT{\PT}{\PT}1_{L_n}\bigg\}}\TOP  \Gamma_2,}
\EQ{\label{pspt-conv} \frac{1}{n\sqrt{h_n}}\SUMJ D_{j,0}^{-1}\INTTP{0}{\DELPPT{\PS}{\PT}1_{L_n}}\TOP 0.}

\begin{lemma}\label{phi-p0-lemma}
Assume (C1) and (C2). Then there {\colorp exist a positive constant $C$ and} a sequence $({\colorp \chi_n})_{n=1}^\infty$ such that ${\colorp \chi_n}\to 0$ as $n\to \infty$ and
\EQNN{
\INTTP{0}{\bigg|\DELPP{0}{\sigma}+\frac{1}{2}\Delta x_j^\top {\colorp \mathfrak{S}_j}h_n^{-1}\Delta x_j-\frac{1}{2}{\rm tr} ({\colorp \mathfrak{S}_jS_j})\bigg|^21_{L_n}}
&\leq C{\colorp \chi_n} (1+|x_{j-1}|)^C, \\
\INTTP{0}{\bigg|\DELPP{0}{\theta}-{\colorp \partial_\theta a^\top (x_{j-1},\theta_0) S_j^{-1}}\Delta x_j\bigg|^21_{L_n}}
&\leq C h_n{\colorp \chi_n} (1+|x_{j-1}|)^C
}
for any $x_{j-1}\in {\colorp \mbbr^m}$, {\colorp where $\mathfrak{S}_j=\partial_\sigma S^{-1}(x_{j-1},\sigma_0)$ and $S_j=S(x_{j-1},\sigma_0)$.}
\end{lemma}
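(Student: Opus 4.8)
The plan is to recognize the two target quantities as the parameter scores of a reference Gaussian (Euler) density and to compare them with the true scores of the continuous transition density through the Malliavin representation of Proposition~\ref{gob-prop2}. First I would reduce to the continuous part. Since $p_{j,0}^0=e^{-\lambda(\theta_0)h_n}p_{x_{j-1},\alpha_0}^{c,h_n}(x_j)$ and the prefactor does not depend on $x_j$, we have $\DELPP{0}{\sigma}=\partial_\sigma\log p_{x_{j-1},\alpha_0}^{c,h_n}$ and $\DELPP{0}{\theta}=-h_n\partial_\theta\lambda(\theta_0)+\partial_\theta\log p_{x_{j-1},\alpha_0}^{c,h_n}$; the deterministic term $h_n\partial_\theta\lambda$ contributes only $O(h_n^2)\le Ch_n\chi_n$ and is absorbed. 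On $L_n$ one has $\tilde p_{j,0}1_{L_n}=p_{j,0}^01_{L_n}$, so each weighted integral equals, up to the factor $e^{-\lambda h_n}\le1$, an expectation $E_{x_{j-1}}[\,|\cdots|^2\,1_{L_n}]$ against $X_{h_n}^{\alpha_0,c}$. Writing $a_j=a(x_{j-1},\theta_0)$ and letting $\phi_j$ be the Gaussian density with mean $x_{j-1}+h_na_j$ and covariance $h_nS_j$, a direct computation gives
\[\partial_\sigma\log\phi_j=\tfrac12\mathrm{tr}(\mathfrak{S}_jS_j)-\tfrac{1}{2h_n}(\Delta x_j-h_na_j)^\top\mathfrak{S}_j(\Delta x_j-h_na_j),\quad \partial_\theta\log\phi_j=\partial_\theta a^\top S_j^{-1}(\Delta x_j-h_na_j),\]
so the two targets of the lemma are exactly $\partial_\sigma\log\phi_j$ and $\partial_\theta\log\phi_j$ up to the $h_na_j$-shifts, whose contribution integrates to $O(h_n)$ resp.\ $O(h_n^2)$ and is likewise absorbed. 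The lemma thus reduces to bounding $\int|\partial_\alpha\log p^{c,h_n}-\partial_\alpha\log\phi_j|^2\,p^{c,h_n}1_{L_n}dx_j$ for $\alpha\in\{\sigma,\theta\}$.

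For the core comparison I would apply Proposition~\ref{gob-prop2} with $T=h_n$ to represent $\partial_\alpha\log p^{c,h_n}(y)$ as the conditional expectation of the Malliavin weight $h_n^{-1}\delta((U^{\alpha_0,h_n})^\top\partial_\alpha X_{h_n}^{\alpha_0,c})$ given $X_{h_n}^{\alpha_0,c}=y$, and then run a stochastic Taylor expansion in powers of $\sqrt{h_n}$ of $X_{h_n}^{\alpha_0,c}$, $\partial_\alpha X_{h_n}^{\alpha_0,c}$, $\mcu^{\alpha_0}$ and $(\mcu^{\alpha_0})^{-1}$, using the explicit SDEs recalled in Section~\ref{conti-part-subsection} with the coefficients Taylor-expanded about $x_{j-1}$. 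At leading order $X_{h_n}^{\alpha_0,c}-x_{j-1}\approx b(x_{j-1},\sigma_0)W_{h_n}$, $\partial_\sigma X_{h_n}^{\alpha_0,c}\approx\partial_\sigma b(x_{j-1},\sigma_0)W_{h_n}$, $\partial_\theta X_{h_n}^{\alpha_0,c}\approx h_n\partial_\theta a_j$, and $U^{\alpha_0,h_n}\approx b^{-1}(x_{j-1},\sigma_0)$, so the divergence collapses, after integration by parts, to the constant-coefficient Gaussian weight, whose conditional expectation is precisely $\partial_\alpha\log\phi_j$. The remainder is then a conditional expectation of a Malliavin functional $R$ carrying one extra power of $\sqrt{h_n}$ for $\alpha=\sigma$, and an additional $\sqrt{h_n}$ for $\alpha=\theta$, since the whole $\theta$-score already scales like $h_n^{1/2}$.

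It remains to bound the remainder in weighted $L^2$. Because conditional expectation is an $L^2$-contraction, $\int|E_{x_{j-1}}[R\mid X_{h_n}^{\alpha_0,c}=\cdot]|^2p^{c,h_n}dx_j\le E_{x_{j-1}}[|R|^2]$, so it suffices to estimate the $L^2$-norm of $R$, which is handled by the moment bounds (\ref{delX-est}), (\ref{U-est}) and the divergence/Clark--Ocone estimates underlying Proposition~\ref{pc-est} and Remark~\ref{third-derivative-rem}; this yields $\chi_n=O(h_n)$ for the $\sigma$-bound and the extra factor $h_n$ for the $\theta$-bound, with constants growing only polynomially in $|x_{j-1}|$. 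The truncation $1_{L_n}$ is only helpful: on $L_n^c$ the Gaussian upper bound of Proposition~\ref{gob-prop} makes every polynomially weighted contribution (in particular the dropped $h_na_j$-shift terms) super-polynomially small, so one may freely insert or remove $1_{L_n}$.

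The main obstacle is the bookkeeping of this stochastic Taylor expansion: carrying it to exactly the order that reproduces the stated leading terms $\tfrac12\mathrm{tr}(\mathfrak{S}_jS_j)$ and $\partial_\theta a^\top S_j^{-1}\Delta x_j$ while certifying that the remainder is genuinely of smaller order in $L^2$, uniformly in $x_{j-1}$ with only polynomial growth. A related subtlety is that one must never condition $\partial_\alpha X_{h_n}^{\alpha_0,c}$ on $\{X_{h_n}^{\alpha_0,c}=y\}$ directly; the conditioning is always mediated by the Malliavin integration by parts packaged in Proposition~\ref{gob-prop2}, which is precisely what legitimizes the expansion.
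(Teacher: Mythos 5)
Your proposal is correct and follows essentially the same route as the paper: represent the scores via the Malliavin weights of Proposition~\ref{gob-prop2}, identify the leading term as the Euler--Gaussian score, and bound the weighted $L^2$ error of the remainder by its unconditional $L^2$ norm using the contraction property of conditional expectation (Jensen). The only difference is that the paper does not carry out the stochastic Taylor expansion of $\delta((U^{\alpha,h_n})^\top\partial_\alpha X_{h_n}^{\alpha,c})$ itself but cites the ready-made expansions (3.23) and (3.29) of Gobet~\cite{gob02}, which deliver exactly the main terms and remainders $H_{1,x}$, $H_{2,x}$ with the $L^p$ bounds you anticipate.
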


\begin{proof}
(3.23) and (3.29) in Gobet~\cite{gob02} yield
{\colorp 
\EQNN{\delta(( U^{\alpha,h_n})^\top \PT X_{h_n,x}^{\alpha,c})
&=h_n(\PT a(x,\theta))^\top S^{-1}(x,\sigma)(X_{h_n,x}^{\alpha,c}-x)+ H_{1,x}, \\
\delta((U^{\alpha,h_n})^\top \PS X_{h_n,x}^{\alpha,c})
&=-\frac{1}{2}(X_{h_n}^{\alpha,c}-x)^\top \PS S^{-1}(x,\sigma)(X_{h_n,x}^{\alpha,c}-x)
+\frac{1}{2}h_n{\rm tr}(\PS S^{-1}(x,\sigma) S(x, \sigma))+ H_{2,x}
}}
{\colorp in the setting of Section~\ref{conti-part-subsection}, where $E[ |H_{1,x}|^p]^{1/p}\leq Ch_n^{3/2}\chi_n(1+|x|)^C$
and $E[ |H_{2,x}|^p]^{1/p}\leq Ch_n\chi_n(1+|x|)^C$ for any $p\geq 2$.}
Hence Proposition~\ref{gob-prop2} implies that
\EQNN{
& \INTTP{0}{\bigg|\DELPP{0}{\sigma}+\frac{1}{2}\Delta x_j^\top \mathfrak{S}_jh_n^{-1}\Delta x_j-\frac{1}{2}{\rm tr} (\mathfrak{S}_jS_j)\bigg|^21_{L_n}}\\
&= {\colorp h_n^{-2}}\INTTP{0}{\Big|E\left[H_{2,x_{j-1}}\big| {\colorp X^{\alpha_0,c}_{h_n,x_{j-1}}}=x_j\right]
\Big|^21_{L_n}}\\
&\leq {\colorp h_n^{-2}} E\left[\left|H_{2,x_{j-1}}\right|^2\right]
\leq {\colorp C \chi_n^2}(1+|x_{j-1}|)^C
}
for any $x_{j-1}\in {\colorp \mbbr^m}$.

The other formula can be shown in the same way.

\end{proof}

\begin{lemma}\label{yu:sumnor}
Assume (C1)--(C4). Let $R$ be a differentiable function on $\mbbr^m$ satisfying
\EQ{\label{R-ineq} |\partial_x^lR(x)|\leq C(1+|x|)^C}
for any $x\in\mbbr^m$ and $l\in\{0,1\}$ with some positive constant $C$. Then 
\begin{equation}
\frac{1}{n}\sumj D_{j,0}^{-1}(x_{j-1},0) R(x_{j-1})\TOP \int R(x)\pi(dx).
\end{equation}
\end{lemma}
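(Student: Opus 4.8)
The plan is to reduce the statement in three stages: first pass from $\tilde P_{\alpha_0,n}$ to $P_{\alpha_0,n}$, then discard the normalizing constants $D_{j,0}^{-1}$, and finally recognize the surviving quantity as a Riemann-sum version of the continuous-time ergodic average to which (C3) applies. By Theorem~\ref{log-likelihood-approx-thm} we have $\sup_{\alpha}\lVert P_{\alpha,n}-\tilde P_{\alpha,n}\rVert\to0$, so for any random variable $Y_n$ and constant $c$ one has $\tilde P_{\alpha_0,n}(|Y_n-c|>\epsilon)\le P_{\alpha_0,n}(|Y_n-c|>\epsilon)+\lVert P_{\alpha_0,n}-\tilde P_{\alpha_0,n}\rVert$. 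Hence it suffices to prove $\frac1n\sumj D_{j,0}^{-1}R(\XJ)\to\int R\,d\pi$ in $P_{\alpha_0,n}$-probability, where now $(\XJ)_j$ are the genuine observations of the ergodic process $X$.

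To remove the normalizing constants I would work on the event $\mathcal K_n=\cap_{j=1}^{n-1}K_{n,j}$, which by Lemma~\ref{yu:maxesti} has $P_{\alpha_0,n}$-probability exceeding $1-\epsilon$ for large $n$ and on which $\max_{1\le j\le n}|\XJ|\le n^\delta$. On $K'_{n,j-1}$ the bound (\ref{dj-est}) gives $|D_{j,0}^{-1}-1|\le\delta_n/(1-\delta_n)=o(n^{-1})$ uniformly in $j$, while (\ref{R-ineq}) gives $|R(\XJ)|\le C(1+n^\delta)^C$ on $\mathcal K_n$. Taking $\delta$ small enough that $\delta C<1$ (this only shrinks the sets $K_{n,j}$ and therefore preserves all the earlier estimates), we obtain
\[
\frac1n\sumj |D_{j,0}^{-1}-1|\,|R(\XJ)|\,1_{\mathcal K_n}\le C\,n^{\delta C}\frac{\delta_n}{1-\delta_n}=o(1),
\]
so it remains to show $\frac1n\sumj R(\XJ)\to\int R\,d\pi$ in $P_{\alpha_0,n}$-probability.

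The core step writes $\frac1n\sumj R(\XJ)=\frac1{T_n}\sumj h_nR(\XJ)$ with $T_n=nh_n\to\infty$ and compares this Riemann sum with $\frac1{T_n}\int_0^{T_n}R(X_t)\,dt=\frac1{T_n}\sumj\intj R(X_t)\,dt$. Their difference is $\frac1{T_n}\sumj\intj[R(\XJ)-R(X_t)]\,dt$, and I would bound its $L^1(P_{\alpha_0,n})$-norm, splitting each interval according to whether a jump occurs in $(t_{j-1},t]$. On the no-jump event $X$ moves through its continuous part only, so the mean value theorem and (\ref{R-ineq}) give $|R(X_t)-R(\XJ)|\le C(1+\sup_{s\le t}|X_s|)^C|X_t-\XJ|$, and standard increment and moment estimates for (\ref{jump-SDE}) under (C1) and (\ref{moment-cond}) yield $E_{\alpha_0}[|R(X_t)-R(\XJ)|1_{\{\text{no jump}\}}]\le C\sqrt{h_n}$. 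On the jump event, whose probability is $O(h_n)$, I bound $|R(X_t)-R(\XJ)|\le C(1+|X_t|)^C+C(1+|\XJ|)^C$ and use Cauchy--Schwarz with (\ref{moment-cond}) to get $E_{\alpha_0}[|R(X_t)-R(\XJ)|1_{\{\text{jump}\}}]\le C\sqrt{O(h_n)}$. Summing gives $\frac1{T_n}\sumj\intj E_{\alpha_0}|R(\XJ)-R(X_t)|\,dt\le C\sqrt{h_n}\to0$, so the Riemann sum and the integral average share the same limit in probability.

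Finally, since $R$ has polynomial growth and $\pi$ has all moments finite (apply (\ref{ergod-cond}) to $g=|x|^q\wedge M$ and let $M\to\infty$, using (\ref{moment-cond}) for uniform integrability), $R$ is $\pi$-integrable, and the ergodicity assumption (\ref{ergod-cond}) with $T=T_n\to\infty$ gives $\frac1{T_n}\int_0^{T_n}R(X_t)\,dt\overset{P}\to\int R\,d\pi$; combining the three reductions yields the claim. I expect the Riemann-sum comparison to be the main obstacle, specifically controlling the contribution of the rare but potentially large jumps: the key point is that a jump falls in an interval of length $h_n$ only with probability $O(h_n)$, so that Cauchy--Schwarz against the uniformly bounded moments from (\ref{moment-cond}) produces the decisive $O(\sqrt{h_n})$ factor.
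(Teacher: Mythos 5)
Your proposal is correct and follows essentially the same route as the paper: discard $1-D_{j,0}^{-1}$ using $\delta_n=o(n^{-1})$ from (\ref{dj-est}) together with the polynomial bound $|R(x_{j-1})|\leq Cn^{\delta C}$ on $\cap_{j'}K_{n,j'}$ (with $\delta$ chosen small), invoke the ergodic theorem for $n^{-1}\sum_j R(X_{t_{j-1}})$ under $P_{\alpha_0,n}$, and transfer between $P_{\alpha_0,n}$ and $\tilde P_{\alpha_0,n}$ via Theorem~\ref{log-likelihood-approx-thm}. The only difference is one of detail: the paper simply asserts the convergence $n^{-1}\sum_j R(x_{j-1})\to\int R\,d\pi$ from (C3) and (\ref{R-ineq}), whereas you spell out the standard Riemann-sum comparison with $T_n^{-1}\int_0^{T_n}R(X_t)\,dt$ and the $\pi$-integrability of $R$, which is a legitimate and complete filling-in of that step.
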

\begin{proof}
(\ref{R-ineq}), (C3), (\ref{dj-est}), {\colorp and a suitable choice of $\delta$ in $K_{n,j}$} yield
{\colorp 
\EQNN{\left|\frac{1}{n}\sumj (1-D_{j,0}^{-1}) R(x_{j-1})1_{\cap_{j'}K_{n,j'}}\right| 
&\leq \delta_n\frac{1}{n}\sumj |R(x_{j-1})| \\
&=o_p(n^{-1})\cdot n^{\delta C}\TOP 0.}
}
Together with {\colorp (\ref{tail-prob-est2})}, we have $n^{-1}\sum_j(1-D_{j,0}^{-1})R(x_{j-1})\TOP 0$.
Moreover, (C3) and (\ref{R-ineq}) yield $n^{-1}\sum_jR(x_{j-1})\to^{P_{\alpha_0,n}} \int R(x)\pi(dx)$.
Then, Theorem~\ref{log-likelihood-approx-thm} yields the desired result.
\end{proof}

Let {\colorp $\Delta X^c_j=X_{h_n,x_{j-1}}^{\alpha_0,c}-x_{j-1}$}. Since
\EQQ{{\colorp \int \Delta x_j1_{L_n}\tilde{p}_{j,0}dx_j=e^{-\lambda h_n}(E[\Delta X^c_j]-E[\Delta X^c_j1_{L_n^c}(\Delta X^c_j)])=o(\sqrt{h_n}),}}
\EQQ{\INTTP{0}{\Delta x_j\Delta x_j^\top1_{L_n}}
={\colorp e^{-\lambda h_n}(E\left[(\Delta X^c_j)(\Delta X^c_j)^\top\right] -E\left[(\Delta X^c_j)(\Delta X^c_j)^\top1_{L_n^c}(\Delta X^c_j)\right])}
=h_n S_j+o(h_n),}
and
\EQNN{&\INTTP{0}{[\Delta x_j]_{i_1}[\Delta x_j]_{i_2}[\Delta x_j]_{i_3}[\Delta x_j]_{i_4}1_{L_n}}\\
&={\colorp e^{-\lambda h_n}}{\colorp E}\left[[\Delta X^c_j]_{i_1}[\Delta X^c_j]_{i_2}[\Delta X^c_j]_{i_3}[\Delta X^c_j]_{i_4}\right] \\
&\quad -{\colorp e^{-\lambda h_n}E}\left[[\Delta X^c_j]_{i_1}[\Delta X^c_j]_{i_2}[\Delta X^c_j]_{i_3}[\Delta X^c_j]_{i_4}1_{L_n^c}(\Delta X^c_j)\right] \\
&=h_n^2([S_j]_{i_1i_2}[S_j]_{i_3i_4}+[S_j]_{i_1i_3}[S_j]_{i_2i_4}+[S_j]_{i_1i_4}[S_j]_{i_2i_3})+o(h_n^2)
}
for $x_{j-1}\in K''_{n,j-1}$ {\colorp with suitable choice of $\delta$ in $K_{n,j-1}$},
the following lemma completes the proof of Theorem~\ref{jump-LAN-thm}.

\begin{lemma}
Assume (C1)--(C4). Then
\EQ{\label{ps-conv2} \frac{1}{2n}\SUMJ D_{j,0}^{-1}{\colorp \left({\rm tr}(S_j^{-1}\partial_{\sigma_k} S_j S_j^{-1}\partial_{\sigma_l} S_j)\right)_{1\leq k,l\leq d_1}}\TOP \Gamma_1,}
\EQ{\label{pt-conv2} \frac{1}{nh_n}\SUMJ D_{j,0}^{-1}\bigg({\colorp \mathfrak{A}_{j,k}^\top S_j^{-1} \mathfrak{A}_{j,l}h_n}+\INTTP{0}{\frac{\partial_{\theta_k}\tilde{f}_0\partial_{\theta_l}\tilde{f}_0}{\tilde{f}_0^2}{\colorp (\Delta x_j)}1_{L_n^c}}\bigg)_{1\leq k,l\leq d_2}\TOP  \Gamma_2,}
\EQ{\label{pspt-conv2} \frac{1}{nh_n^{3/2}}\SUMJ D_{j,0}^{-1}\INTTP{0}{{\colorp \mathfrak{A}_{j,k'}^\top S_j^{-1}\Delta x_j\Delta x_j^\top \partial_{\theta_{l'}} S_j^{-1}(x_{j-1},\sigma_0)}\Delta x_j1_{L_n}}\TOP 0}
for $1\leq k'\leq d_1$ and $1\leq l'\leq d_2$, {\colorp where $\mathfrak{A}_{j,k}=\partial_{\theta_k}a(x_{j-1},\theta_0)$.}
\end{lemma}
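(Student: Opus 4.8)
The plan is to prove the three convergences separately, since (\ref{ps-conv2}) and the first summand of (\ref{pt-conv2}) reduce at once to the ergodic averaging Lemma~\ref{yu:sumnor}, (\ref{pspt-conv2}) is a pure size estimate exploiting the vanishing of the leading third moment, and only the jump contribution in (\ref{pt-conv2}) requires real work.

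For (\ref{ps-conv2}) I would note that the summand depends only on $x_{j-1}$, and set $R_{kl}(x)=\frac{1}{2}{\rm tr}(S^{-1}\partial_{\sigma_k}S S^{-1}\partial_{\sigma_l}S)(x,\sigma_0)$. Under (C1)--(C2) the matrix $S^{-1}$ is bounded and $S$ has polynomially bounded derivatives, so $R_{kl}$ satisfies (\ref{R-ineq}); the cyclic invariance of the trace rewrites $R_{kl}(x)=\frac{1}{2}{\rm tr}(\partial_{\sigma_k}S S^{-1}\partial_{\sigma_l}S S^{-1})(x,\sigma_0)$, and Lemma~\ref{yu:sumnor} gives the limit $\int R_{kl}\,d\pi=[\Gamma_1]_{kl}$. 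The same lemma, applied to $R_{kl}(x)=\partial_{\theta_k}a(x,\theta_0)^\top S^{-1}(x,\sigma_0)\partial_{\theta_l}a(x,\theta_0)$ after cancelling the factor $h_n$, handles the first summand of (\ref{pt-conv2}) and produces the diffusion part $\int(\partial_{\theta_k}a)^\top S^{-1}(\partial_{\theta_l}a)(x,\alpha_0)\,d\pi$ of $[\Gamma_2]_{kl}$.

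For (\ref{pspt-conv2}) the key observation is that the summand is a fixed linear combination, with coefficients bounded by $C(1+|x_{j-1}|)^C$, of third moments $\int[\Delta x_j]_a[\Delta x_j]_b[\Delta x_j]_c 1_{L_n}\tilde p_{j,0}\,dx_j$. On $L_n$ this measure agrees with $e^{-\lambda h_n}$ times the law of $\Delta X^c_j=X_{h_n,x_{j-1}}^{\alpha_0,c}-x_{j-1}$, whose leading Gaussian part has mean $O(h_n)$, covariance $h_nS_j+o(h_n)$, and vanishing third central moment; continuing the moment expansion used before the lemma to cubic order shows each such moment is $O(h_n^2)$ uniformly on $K''_{n,j-1}$. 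Hence the summand is $O(h_n^2)(1+|x_{j-1}|)^C$, and the prefactor $1/(nh_n^{3/2})$, the boundedness of $D_{j,0}^{-1}$, and the boundedness of $n^{-1}\sum_j(1+|x_{j-1}|)^C$ (from (C3)) give a bound of order $h_n^{1/2}$, which vanishes.

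The main obstacle is the jump part of (\ref{pt-conv2}), namely $\frac{1}{nh_n}\sum_j D_{j,0}^{-1}\int\frac{\partial_{\theta_k}\tilde f_0\partial_{\theta_l}\tilde f_0}{\tilde f_0^2}(\Delta x_j)1_{L_n^c}\tilde p_{j,0}\,dx_j$. Here I would use $\tilde p_{j,0}1_{L_n^c}=p_{j,0}^1 1_{L_n^c}$, the representation $p_{j,0}^1=\Phi_j^0(1)$, and Fubini to rewrite the inner $x_j$-integral as $E_{x+y}[g(X_{t_j-\tau}^{\alpha_0,c}-x_{j-1})]$ with $g=\frac{\partial_{\theta_k}\tilde f_0}{\tilde f_0}\frac{\partial_{\theta_l}\tilde f_0}{\tilde f_0}1_{L_n^c}$; since $\frac{\partial_{\theta_k}\tilde f_0}{\tilde f_0}=\partial_{\theta_k}\log f_{\theta_0}-h_n\partial_{\theta_k}\lambda$, the $O(h_n)$ shift is negligible. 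The plan is then to replace both $X_{t_j-\tau}^{\alpha_0,c}-x_{j-1}$ and $x$ by the jump size $y$ (controlling the two continuous displacements, each $O(\sqrt{h_n})$ in $L^p$, via the Lipschitz bound (\ref{jcond2}) and the growth bound (\ref{jcond})), to discard through Lemma~\ref{incprob} the events on which the increment leaves $\supp(F_{\theta_0})$ or lands where $f_{\theta_0}=0$, and to replace $1_{L_n^c}(y)$ by $1$ at the cost of $\int_{|y|\le h_n^\rho}(\ldots)f_{\theta_0}\,dy=O(h_n^{\rho(m+\gamma)})$ using (\ref{jcond}). After integrating out the two continuous densities (each of total mass one) and the time variable $\tau$, each $j$-summand becomes $h_n e^{-\lambda h_n}\int\frac{\partial_{\theta_k}f_{\theta_0}\partial_{\theta_l}f_{\theta_0}}{f_{\theta_0}}1_{\{f_{\theta_0}\neq0\}}\,dy+o(h_n)$, uniformly on $K''_{n,j-1}$. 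Dividing by $h_n$ this is a constant in $x_{j-1}$ plus a uniform $o(1)$ error, so Lemma~\ref{yu:sumnor} (with $R$ equal to that constant) together with (\ref{dj-est}) yields the jump part $\int\frac{\partial_{\theta_k}f_{\theta_0}\partial_{\theta_l}f_{\theta_0}}{f_{\theta_0}}1_{\{f_{\theta_0}\neq0\}}\,dy$ of $[\Gamma_2]_{kl}$; adding the diffusion and jump parts completes (\ref{pt-conv2}), and hence the lemma and Theorem~\ref{jump-LAN-thm}. The delicate point will be making the replacement $E_{x+y}[g(X_{t_j-\tau}^{\alpha_0,c}-x_{j-1})]\approx g(y)$ rigorous with a \emph{uniform} $o(h_n)$ error, because $g$ carries the nonlinear factor $(\partial_\theta\log f_{\theta_0})^2$ and the truncation $1_{L_n^c}$; this is precisely the machinery already assembled for Proposition~\ref{p1-est-prop}, which I would reuse.
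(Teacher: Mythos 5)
Your proposal is correct and follows essentially the same route as the paper: (\ref{ps-conv2}) and the drift part of (\ref{pt-conv2}) via Lemma~\ref{yu:sumnor}, (\ref{pspt-conv2}) via the $O(h_n^2)$ bound on the truncated third moments of $\Delta X^c_j$, and the jump part of (\ref{pt-conv2}) via the $\Phi_j^0$ representation, replacement of $\Delta x_j$ by the jump size $y$ using (\ref{jcond2}), and the removal of the threshold and zero-set events with the estimates already assembled for Proposition~\ref{p1-est-prop} and Lemma~\ref{incprob}. The only cosmetic difference is that you drop $1_{L_n^c}(y)$ at cost $O(h_n^{\rho(m+\gamma)})$ where the paper instead adds back the $1_{L_n}(\Delta x_j)$ term and invokes the (\ref{p1-diff-est})-type bound, which is an equivalent step.
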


\begin{proof}
Since
\EQQ{ {\colorp \bigg|\INTTP{0}{[\Delta x_j]_{i_1}[\Delta x_j]_{i_2}[\Delta x_j]_{i_3}1_{L_n}}\bigg|\leq |E[[X_{h_n,x_{j-1}}^{\alpha_0,c}]_{i_1}[X_{h_n,x_{j-1}}^{\alpha_0,c}]_{i_2}[X_{h_n,x_{j-1}}^{\alpha_0,c}]_{i_3}1_{L_n}(\D X^c_j)]|\leq Ch_n^2(1+|x_{j-1}|)^C}}
for $x_{j-1}\in K''_{n,j-1}$, (\ref{pspt-conv2}) holds true by {\colorp (\ref{dj-est}) and (B1)}.

Moreover, thanks to Lemma~\ref{yu:sumnor},
we have (\ref{ps-conv2}) and
\EQQ{\frac{1}{ n}\SUMJ D_{j,0}^{-1}{\colorp \mathfrak{A}_{j,k}^\top S_j^{-1}\mathfrak{A}_{j,l}}\TOP \int \partial_{\theta_k} a^\top S^{-1}\partial_{\theta_l} a(x,\alpha_0)d\pi(x)}
for $1\leq k,l\leq d_2$.
Then it is sufficient to show that
\EQ{\label{pt-conv3} \frac{1}{nh_n}\SUMJ D_{j,0}^{-1}\INTTP{0}{\frac{\partial_{\theta_k}\tilde{f}_0\partial_{\theta_l}\tilde{f}_0}{\tilde{f}_0^2}{\colorp (\Delta x_j)}1_{L_n^c}}
\TOP  \int \frac{\partial_{\theta_k}f_{\theta_0}\partial_{\theta_l}f_{\theta_0}}{f_{\theta_0}} 1_{\{f_{\theta_0}\neq 0\}}(y)dy.
}

Similar arguments to (\ref{del-p1-est}), {\colorp (\ref{Phi-Delta-est}), (\ref{p1-diff-est}), and (\ref{PT-Dj-est})}  yield
\EQNN{& \frac{1}{D_{j,0}}\INTTP{0}{\bigg(\DELPYY{\PT}\bigg)^2{\colorp (\Delta x_j)}1_{L_n^c}} \\
&\quad =\frac{1}{D_{j,0}}\int \bigg(\DELPYY{\PT}\bigg)^2(\Delta x_j){\colorp p_{j,0}^1}1_{L_n^c}dx_j \\
&\quad=\frac{1}{D_{j,0}}\bigg\{\int \Phi_j^0\bigg(\bigg(\DELPYY{\PT}\bigg)^2(y)\bigg)dx_j+\int \Phi_j^0\bigg(\bigg(\DELPYY{\PT}\bigg)^2(\Delta x_j)-\bigg(\DELPYY{\PT}\bigg)^2(y)\bigg)dx_j \\
&\quad \quad \quad \quad +\int {\colorp \bigg(\DELPYY{\PT}\bigg)^2(\Delta x_j)p_{j,0}^1}1_{L_n}dx_j\bigg\} \\
&\quad =\frac{ 1}{D_{j,0}}\int \bigg(\DELPYY{\PT}\bigg)^2 \tilde{f}_0(y)dy+ {\colorp o(h_n)} \\
&\quad =h_n\int \frac{(\PT f_{\theta_0})^2}{f_{\theta_0}} 1_{\{f_{\theta_0}\neq 0\}}(y)dy+ o(h_n)
}
{\colorp for $x_{j-1}\in K''_{n,j-1}$ with suitable choice of $\delta$ in $K_{n,j-1}$. Here we used (\ref{dj-est})} and that
$\DELPYY{\PT}=-h_n\PT \lambda(\theta_0) +\frac{\PT f_{\theta_0}}{f_{\theta_0}}$ if $f_{\theta_0}\neq 0$. 
Therefore, we have (\ref{pt-conv3}).

\end{proof}

\begin{small}
\bibliographystyle{abbrv}
\bibliography{referenceLibrary_mathsci,referenceLibrary_other}
\end{small}

\end{document}